\newtheorem{thm}{Theorem}[section]
\newtheorem{prop}[thm]{Proposition}
\newtheorem{lem}[thm]{Lemma}
\newtheorem{cor}[thm]{Corollary}
\newtheorem{fact}[thm]{Fact}
\newtheorem{quest}[thm]{Question}
\theoremstyle{definition}
\newtheorem{defn}[thm]{Definition}
\theoremstyle{remark}
\newenvironment{claim}[1]{\par\noindent\emph{Claim.}\space#1}{}
\newenvironment{claimproof}[1]{\par\noindent\emph{Proof of claim.}\space#1}{\hfill $\qed_{\text{claim}}$}
\newcommand{\Qb}{\mathbb{Q}}
\newcommand{\Lc}{\mathcal{L}}
\newcommand{\Tc}{\mathcal{T}}
\newcommand{\Hc}{\mathcal{H}}
\newcommand{\Cc}{\mathcal{C}}
\makeatletter \DeclareRobustCommand{\cset}{\@ifstar\star@cset\normal@cset}
\newcommand{\star@cset}[1]{{\left\llbracket#1\right\rrbracket}}
\newcommand{\normal@cset}[2][]{{\mathopen{#1\llbracket}#2\mathclose{#1\rrbracket}}}
\newcommand{\tleq}{\trianglelefteq}
\newcommand{\xbar}{\bar{x}}
\newcommand{\ybar}{\bar{y}}
\newcommand{\abar}{\bar{a}}
\newcommand{\dbar}{\bar{d}}
\newcommand{\bbar}{\bar{b}}
\newcommand{\cbar}{\bar{c}}
\newcommand{\mbar}{\bar{m}}
\newcommand{\nbar}{\bar{n}}
\newcommand{\tbar}{\bar{t}}
\newcommand{\ibar}{\bar{\imath}}
\newcommand{\sbar}{\bar{s}}
\newcommand{\gbar}{\bar{g}}
\newcommand{\hbarr}{\bar{h}}
\newcommand{\IZF}{\ifmmode\mathsf{IZF}\else$\mathsf{IZF}$\fi}
\newcommand{\CZF}{\ifmmode\mathsf{CZF}\else$\mathsf{CZF}$\fi}
\newcommand{\ZF}{\ifmmode\mathsf{ZF}\else$\mathsf{ZF}$\fi}
\newcommand{\ZFC}{\ifmmode\mathsf{ZFC}\else$\mathsf{ZFC}$\fi}
\newcommand{\AC}{\ifmmode\mathsf{AC}\else$\mathsf{AC}$\fi}
\newcommand{\AD}{\ifmmode\mathsf{AC}\else$\mathsf{AD}$\fi}
\newcommand{\BZ}{\ifmmode\mathsf{BZ}\else$\mathsf{BZ}$\fi}
\newcommand{\GCH}{\ifmmode\mathsf{GCH}\else$\mathsf{GCH}$\fi}
\newcommand{\PA}{\ifmmode\mathsf{PA}\else$\mathsf{PA}$\fi}
\newcommand{\DC}{\ifmmode\mathsf{DC}\else$\mathsf{DC}$\fi}
\newcommand{\MP}{\ifmmode\mathsf{MP}\else$\mathsf{MP}$\fi}
\newcommand{\CT}{\ifmmode\mathsf{CT}\else$\mathsf{CT}$\fi}
\newcommand{\PAx}{\ifmmode\mathsf{PAx}\else$\mathsf{PAx}$\fi}
\newcommand{\VL}{\ifmmode{\mathsf{V}=\mathsf{L}}\else$\mathsf{V}=\mathsf{L}$\fi}
\newcommand{\res}{{\upharpoonright}}
\newlength{\savedparindent}
\DeclareMathOperator{\tp}{tp}
\DeclareMathOperator{\cf}{cf}
\newcommand{\Fraisse}{Fra\"\i ss\'e}
\def\Ind{\setbox0=\hbox{$x$}\kern\wd0\hbox to 0pt{\hss$\mid$\hss}
\lower.9\ht0\hbox to 0pt{\hss$\smile$\hss}\kern\wd0}
\def\Notind{\setbox0=\hbox{$x$}\kern\wd0\hbox to 0pt{\mathchardef
\nn=12854\hss$\nn$\kern1.4\wd0\hss}\hbox to
0pt{\hss$\mid$\hss}\lower.9\ht0 \hbox to 0pt{\hss$\smile$\hss}\kern\wd0}
\def\ind{\mathop{\mathpalette\Ind{}}}
\newcommand{\indu}{\ind^{\!\!\textnormal{u}}}
\newcommand{\indbu}{\ind^{\!\!\textnormal{bu}}}
\DeclareMathOperator{\lex}{lex}
\newcommand{\trianglegeq}{\mathrel{\trianglerighteq}}
\newcommand{\trianglegess}{\mathrel{\triangleright}}
\newcommand{\triangleleq}{\mathrel{\trianglelefteq}}
\newcommand{\triangleless}{\mathrel{\triangleleft}}
\DeclareMathOperator{\bu}{bu}
\newcommand{\Equiv}{\mathsf{Equiv}}
\newcommand{\NOP}{\mathrm{NOP}}
\newcommand{\OP}{\mathrm{OP}}
\newcommand{\NFOP}{\mathrm{NFOP}}
\newcommand{\FOP}{\mathrm{FOP}}
\newcommand{\NIP}{\mathrm{NIP}}
\newcommand{\IP}{\mathrm{IP}}
\newcommand{\IFOP}{\mathrm{IFOP}}
\newcommand{\NSOP}{\mathrm{NSOP}}
\newcommand{\NTP}{\mathrm{NTP}}
\newcommand{\concat}{\mathop{\frown}}
\begin{document}

\title{Indiscernible extraction at small large cardinals from a higher-arity stability notion}

\address{Department of Mathematics \\
  Iowa State University \\
  396 Carver Hall \\
  411 Morrill Road \\
  Ames, IA 50011, USA}
\author{James E. Hanson}
\email{jameseh@iastate.edu}
\date{\today}

\keywords{indiscernible sequences, large cardinals, higher-arity stability}
\subjclass[2020]{03C45}

\begin{abstract}
  We introduce a higher-arity stability notion defined in terms of \emph{$k$-splitting}, a higher-arity generalization of splitting. We show that theories with bounded $k$-splitting have improved indiscernible extraction at $k$-ineffable cardinals, and we give a non-trivial example of a theory with bounded $k$-splitting but unbounded $(k-1)$-splitting for each odd $k > 1$. We also show that bounded $k$-splitting implies $\NFOP_k$, a higher arity stability notion introduced by Terry and Wolf. We then use our indiscernible extraction result together with a construction of Kaplan and Shelah to give a strong counterexample to the converse: an $\NIP$ theory with unbounded $k$-splitting for every $k$. Finally, as a thematically related but technically independent result, we show that treelessness implies $\NFOP_2$, sharpening a result of Kaplan, Ramsey, and Simon.

\end{abstract}

\maketitle

\section*{Introduction}
\label{sec:intro}

In \cite{Shelah_2014},\footnote{See also \cite{Chernikov_2019} for a comprehensive treatment of $k$-dependence.} Shelah introduced a natural higher-arity generalization of the independence property, the negation of which is called \emph{$k$-dependence} or $\NIP_k$. Since then, some model theorists and combinatorialists have tried to find good higher-arity generalizations of stability, with the most developed general notion currently in print being the (negation of the) \emph{functional order property} or (N)$\FOP_k$, introduced by Terry and Wolf in %
\cite{TerryWolf2021,Terry_Wolf_Irregular} but also developed extensively in the infinitary model-theoretic setting by Abd Aldaim, Conant, and Terry in \cite{FOP}. In the binary case, Takeuchi defined in \cite{Takeuchi2OrderProperty} a similar notion which he called the \emph{$2$-order property} or $\OP_2$ and which is referred to in \cite{FOP,TerryWolf2021} as the \emph{increasing functional order property} or $\IFOP_2$. In \cite{Kaplan_2024}, Kaplan, Ramsey, and Simon also introduced the notion of \emph{treelessness}, a mutual generalization of stability and binarity, and demonstrated its model-theoretic significance by, among other things, showing that any treeless $\NSOP_1$ theory is simple. Chernikov has also introduced a similar `generic binarity' condition called \emph{$\Cc$-lessness} \cite{ChernikovUpcoming2025,ChernikovOberwolfach2023}.\footnote{In \cite{ChernikovOberwolfach2023}, this condition is also referred to as treelessness, but the precise relationship between treelessness (in the sense of \cite{Kaplan_2024}) and $\Cc$-lessness is unknown.} These notions have a tight relationship with the $\NIP_k$ hierarchy (see Figure~\ref{fig:implications}).

Generally speaking, a $k$-ary stability notion needs to be a reasonably well-motivated mutual generalization of stability and $k$-arity.\footnote{Recall that a theory $T$ is \emph{$k$-ary} (or satisfies the property of \emph{$k$-arity}) if every formula is equivalent modulo $T$ to a Boolean combination of formulas with at most $k$ free variables. A theory is \emph{binary} if it is $2$-ary.} In this paper we are going to define a higher-arity stability notion (\cref{defn:bounded-k-splitting}) by applying this philosophy to one of the basic motivation concepts in Shelah's original development of stability theory, namely \emph{splitting}. As we will see in \cref{sec:relationship-to-other}, however, the notion we arrive at doesn't seem to fit cleanly into the hierarchy in Figure~\ref{fig:implications}. 

Recall that a type $\tp(a/B) \in S_x(B)$ \emph{splits} over $C \subseteq B$ if there is a formula $\varphi(x,y)$ and $b,b' \in A$ with $b \equiv_C b'$ such that $\varphi(a,b)$ and $\neg \varphi(a,b')$ hold \cite[Def.~I.2.6]{shelah1990classification}. %

\begin{fact}[{\cite[Lem.~I.2.7]{shelah1990classification}}]\label{fact:stable-split}
  $T$ is stable if and only if there is a cardinal $\lambda$ such that for any $a$ and $B$, there is a $C \subseteq B$ with $|C| < \lambda$ such that $\tp(a/B)$ does not split over $C$.
\end{fact}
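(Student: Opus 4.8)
The plan is to prove the two directions separately. The implication ``bounded splitting $\To$ stable'' is a short type count, while ``stable $\To$ bounded splitting'' reduces to the classical chain-condition characterization of stability.

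For ``bounded splitting $\To$ stable'', fix a $\lambda$ witnessing the right-hand side. The crucial observation is that if $\tp(a/B)$ does not split over $C \subseteq B$, then $\tp(a/B)$ is determined by the single function $F$ sending each pair $(\varphi(x,\bar y), r)$ --- with $\varphi$ an $L$-formula and $r \in S_{\bar y}(C)$ --- to the truth value of $\varphi(a,\bar b)$ for $\bar b \models r$ in $B$: this value depends only on $r$ precisely because $\tp(a/B)$ does not split over $C$, and $F$ determines $\tp(a/B)$ since for $\bar b$ from $B$ the formula $\varphi(x,\bar b)$ lies in $\tp(a/B)$ iff $F(\varphi,\tp(\bar b/C))=1$. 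Since $|S_{\bar y}(C)| \le 2^{|C|+|T|}$ for each finite $\bar y$, at most $2^{2^{|C|+|T|}} \le 2^{2^{\lambda+|T|}}$ members of $S_x(B)$ fail to split over a fixed $C$. Now put $\mu = 2^{2^{\lambda+|T|}}$, so $\mu^{<\lambda} = \mu$; for any $B$ with $|B| \le \mu$ there are at most $\mu$ sets $C \subseteq B$ with $|C| < \lambda$, and every type over $B$ fails to split over at least one of them, whence $|S_x(B)| \le \mu \cdot 2^{2^{\lambda+|T|}} = \mu$. So $T$ is $\mu$-stable, hence stable.

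For ``stable $\To$ bounded splitting'', recall that a \emph{splitting chain} for $a$ of length $\kappa$ is an increasing sequence $(C_i)_{i<\kappa}$ of sets with $\tp(a/C_{i+1})$ splitting over $C_i$ for every $i<\kappa$. Arguing contrapositively, suppose $\tp(a/B)$ splits over every $C \subseteq B$ with $|C| \le |T|$. Build $(C_i)_{i<|T|^+}$ inside $B$ recursively: set $C_0 = \emptyset$; take unions at limits; and given $C_i$, which has size $\le |T|$ since each increment is finite and $|i| \le |T|$, use that $\tp(a/B)$ splits over $C_i$ to choose $\varphi_i(x,\bar y)$ and $\bar b_i \equiv_{C_i} \bar b_i'$ in $B$ with $\models \varphi_i(a,\bar b_i) \wedge \neg\varphi_i(a,\bar b_i')$, and set $C_{i+1} = C_i \cup \bar b_i \cup \bar b_i'$. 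This yields a splitting chain of length $|T|^+$, contradicting the standard fact that a stable theory has no splitting chain of length $|T|^+$ (part of the classical development of stability; see \cite{shelah1990classification}). Hence for stable $T$ every type over every $B$ fails to split over some $C \subseteq B$ with $|C| \le |T|$, so $\lambda = |T|^+$ works.

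The step I expect to be the genuine obstacle is the appeal in the second direction to ``no long splitting chains in stable theories''. Reproving it amounts to the usual argument --- pigeonhole the witnessing formulas along a chain long enough for Erd\H{o}s--Rado (hence in general strictly longer than $|T|^+$), then extract an ordered configuration witnessing the order property --- so it is cleanest to cite it rather than reproduce it. A secondary point to keep in mind is that in the first direction one should not try to sharpen the count to $|S_x(B)| \le |B|^{|T|}$: that improvement relies on non-splitting types being definable over the small base, which presupposes stability, so the crude double-exponential estimate above is exactly what keeps the argument non-circular.
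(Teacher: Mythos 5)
The paper does not give its own proof of this Fact; it simply cites Shelah's Lemma I.2.7. So there is nothing in the paper to compare your argument against, and the question is only whether your reconstruction is sound. It is, and both directions are handled sensibly.

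Your forward direction (bounded splitting implies stability) is a clean type-count: the invariant function $F$ on pairs $(\varphi, r)$ with $r \in S_{\bar y}(C)$ is exactly the right object, the cardinal arithmetic $\mu^{<\lambda} = \mu$ for $\mu = 2^{2^{\lambda + |T|}}$ checks out, and your closing remark --- that one should resist the temptation to sharpen to $|S_x(B)| \le |B|^{|T|}$ via definability, since that would smuggle stability back in --- is exactly the right caution. One pedantic note: $F$ is only a partial function (types over $C$ not realized in $B$ have no value), so the count is really $3^{2^{|C|+|T|}}$ rather than $2^{2^{|C|+|T|}}$, but this makes no difference for infinite cardinals. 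The reverse direction correctly reduces the problem to the absence of splitting chains of length $|T|^+$ in stable theories, which is indeed a prior technical lemma in Shelah's development (proved via pigeonholing the witnessing formula along a long enough chain, extracting an ordered configuration, and reading off the order property), and your chain construction inside $B$ is the standard reduction. Your honesty about where the genuine content lives --- in the cited ``no long splitting chains'' lemma --- is appropriate, and citing it rather than reproving it is a reasonable choice given that the statement being established is itself only quoted in the paper.
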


Splitting is quite important in Shelah's original conception of stability theory. For example, Shelah defines forking in terms of `strong splitting' at the beginning of Chapter III of \cite{shelah1990classification} and then introduces dividing a few pages later.

Shelah doesn't use the following terminology in \cite{shelah1990classification}, but the concept is implicit in the statement of \cite[Lem.~I.2.5]{shelah1990classification}.

\begin{defn}
  A sequence $(a_i)_{i < \alpha}$ of elements is \emph{end-homogeneous over $B$} if for any $i < j < \alpha$ and $a_i \equiv_{Ba_{<i}} a_j$.
\end{defn}


%

 The significance of this condition is that the task of finding an indiscernible sequence can be factored into non-splitting and end-homogeneity.

\begin{fact}[{Shelah \cite[Lem.~I.2.5]{shelah1990classification}}]\label{fact:indisc-tail-split}
  If $(a_i)_{i < \alpha}$ is end-homogeneous over $B$ and for each $i < \alpha$, $\tp(a_i/Ba_{<i})$ does not split over $B$, then $(a_i)_{i <\alpha}$ is indiscernible over $B$.
\end{fact}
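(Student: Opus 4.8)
The plan is to establish, by induction on $n \ge 1$, the statement that any two increasing $n$-tuples drawn from $(a_i)_{i < \alpha}$ realize the same type over $B$; this is exactly the assertion that $(a_i)_{i<\alpha}$ is indiscernible over $B$. The base case $n = 1$ is immediate from end-homogeneity alone: if $i < j < \alpha$, then $a_i \equiv_{Ba_{<i}} a_j$, so in particular $a_i \equiv_B a_j$.

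For the inductive step, suppose the claim holds at level $n$ and fix increasing tuples $i_0 < \dots < i_n$ and $j_0 < \dots < j_n$ below $\alpha$. Set $m := \max(i_n, j_n)$, so that $m < \alpha$ while $m > i_{n-1}$ and $m > j_{n-1}$. I would proceed in three moves. First, use end-homogeneity to align the last coordinates: since $i_0, \dots, i_{n-1} < i_n \le m$, the relation $a_{i_n} \equiv_{Ba_{<i_n}} a_m$ (trivial when $i_n = m$) gives $a_{i_0}\cdots a_{i_{n-1}} a_{i_n} \equiv_B a_{i_0}\cdots a_{i_{n-1}} a_m$, and symmetrically for the $j$-tuple. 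Second, apply the inductive hypothesis to the increasing $n$-tuples $(i_0, \dots, i_{n-1})$ and $(j_0, \dots, j_{n-1})$ to get $a_{i_0}\cdots a_{i_{n-1}} \equiv_B a_{j_0}\cdots a_{j_{n-1}}$. Third, observe that both of these $n$-tuples are enumerations of subsets of $\{a_i : i < m\}$ (because $i_{n-1} < i_n \le m$ and $j_{n-1} < j_n \le m$); since $\tp(a_m/Ba_{<m})$ does not split over $B$, the equivalence from the second move lifts to $a_{i_0}\cdots a_{i_{n-1}} a_m \equiv_B a_{j_0}\cdots a_{j_{n-1}} a_m$. Chaining the three equivalences yields $a_{i_0}\cdots a_{i_n} \equiv_B a_{j_0}\cdots a_{j_n}$, which closes the induction.

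I expect the only subtleties to be bookkeeping rather than conceptual. One should be slightly careful choosing the common index $m$: taking $\max(i_n, j_n)$ --- rather than, say, a successor of $\max(i_{n-1}, j_{n-1})$, which need not lie in $\alpha$ --- guarantees $m \in \alpha$ while still placing $m$ strictly above $i_{n-1}$ and $j_{n-1}$, which is what legitimizes the first and third moves. The other point is that ``$\tp(a_m/Ba_{<m})$ does not split over $B$'' is stated in terms of a single formula and a single pair of parameters, whereas the third move uses the standard equivalent tuple formulation: $\bar b \equiv_B \bar b'$ implies $\bar b \equiv_{Ba_m} \bar b'$ for all finite tuples $\bar b, \bar b'$ from $Ba_{<m}$. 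Recording this equivalence is the only extra ingredient; the real content of the argument is carried entirely by the interplay between end-homogeneity and non-splitting isolated above.
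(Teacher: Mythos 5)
Your proof is correct. The paper does not give its own proof of this fact (it is stated with a citation to Shelah), but your argument is the standard one, and it closely parallels the paper's proof of the $k$-ary generalization \cref{prop:k-indisc-tail-split}: in each inductive step you use end-homogeneity to move the last coordinate to a common index, then invoke non-splitting of the type of that last element over $B$ to transport the induction-hypothesis equivalence of the initial $n$-tuples across it. The only cosmetic difference is your choice of pivot $m = \max(i_n, j_n)$ to compare two arbitrary tuples symmetrically, whereas the paper's \cref{prop:k-indisc-tail-split} compares a general tuple directly to the initial segment $a_0,\dots,a_{n+k}$; both work for the same reason. Your bookkeeping (choosing $m$ inside $\alpha$, and invoking the tuple form of non-splitting) is handled correctly.
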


This fact is essentially why stability allows for improved indiscernible extraction. Bounded splitting ensures that any sufficiently long end-homogeneous sequence will be indiscernible on a large subsequence, and, unlike ordinary indiscernibility, end-homogeneity is easy to find lying around:

\begin{fact}\label{lem:tail-indisc-extraction}
  For any set of parameters $B$, $\lambda \geq |T|+|B|$, and sequence $(a_i)_{i < \beth(\lambda)^+}$, there is an $X \subseteq \beth(\lambda)^+$ with $|X| = \lambda^+$ such that $(a_i)_{i \in X}$ is end-homogeneous over $B$.
\end{fact}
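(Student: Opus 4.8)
The plan is a transfinite recursion of length $\lambda^+$, of the sort familiar from the proof of the Erd\H{o}s--Rado theorem. I will construct simultaneously an increasing enumeration $X = \{x_\xi : \xi < \lambda^+\}$ of ordinals below $\beth(\lambda)^+$ and a $\subseteq$-decreasing sequence of ``candidate pools'' $P_\xi \subseteq \beth(\lambda)^+$, subject to the invariant that $P_\xi$ lies strictly above $\sup\{x_\eta : \eta < \xi\}$ and that all $a_i$ with $i \in P_\xi$ realize one common complete type $p_\xi$ over $B a_{x_{<\xi}} := B \cup \{a_{x_\eta} : \eta < \xi\}$, with the $p_\xi$ forming a $\subseteq$-increasing chain. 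Granting that such a recursion can be carried out through all stages $\xi < \lambda^+$, I set $x_\xi := \min P_\xi$ at each stage; then for any $\eta > \xi$ the ordinal $x_\eta = \min P_\eta$ lies in $P_\eta \subseteq P_\xi$, so $a_{x_\xi}$ and $a_{x_\eta}$ both realize $p_\xi$ over $B a_{x_{<\xi}}$, hence $a_{x_\xi} \equiv_{B a_{x_{<\xi}}} a_{x_\eta}$, which is exactly the assertion that $(a_i)_{i \in X}$ is end-homogeneous over $B$.

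The successor step is routine type-counting. Given $P_\xi$ of size $\beth(\lambda)^+$ and $x_\xi = \min P_\xi$, partition $P_\xi \setminus (x_\xi + 1)$ according to the type $\tp(a_i / B a_{x_{\le\xi}})$. Since $|B a_{x_{\le\xi}}| \le \lambda$ (using $|B| \le \lambda$ and $|\xi| \le \lambda$) and $|T| \le \lambda$, there are at most $2^\lambda$ classes, and as $\beth(\lambda)^+$ is a regular cardinal exceeding $2^\lambda$ some class $C$ has full size $\beth(\lambda)^+$. Put $P_{\xi+1} := C$ and let $p_{\xi+1}$ be the common value of $\tp(a_i/B a_{x_{\le\xi}})$ on $C$; since $P_{\xi+1} \subseteq P_\xi$ we get $p_{\xi+1}\res B a_{x_{<\xi}} = p_\xi$, so the chain condition is preserved, and at a limit $\delta$ the union $p_\delta := \bigcup_{\xi<\delta} p_\xi$ is again a complete type over $B a_{x_{<\delta}}$, realized by every element of $\bigcap_{\xi<\delta} P_\xi$.

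The real obstacle is the limit step: at a limit $\delta < \lambda^+$ the only natural choice for $P_\delta$ is $\bigcap_{\xi<\delta} P_\xi$, but a $\subseteq$-decreasing sequence of subsets of $\beth(\lambda)^+$ of length $\delta$, each of full size, can nonetheless have small or empty intersection, so the naive ``largest class'' choice above does not survive limits and the pools must be thinned more carefully. The standard remedy is to replace the condition ``has size $\beth(\lambda)^+$'' by an ordinal rank on pools that is well behaved under the $\delta$-fold intersections forced at limits: one shows by induction on $\gamma < \lambda^+$ that a pool of size at least $\beth_{\lambda+\gamma}$ supports a further $\gamma$ stages of the construction, the successor case being the type-counting above and the limit case using that $\beth_{\lambda+\delta} = \sup_{\gamma<\delta}\beth_{\lambda+\gamma}$. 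Since $\beth(\lambda)^+$ dominates $\beth_{\lambda+\gamma}$ for every $\gamma < \lambda^+$ — which is precisely why the threshold is this beth number rather than, say, $(2^\lambda)^+$ — the recursion starting from $P_0 = \beth(\lambda)^+$ indeed runs through all $\lambda^+$ stages. I expect the one place that needs genuine care to be setting up this rank so that the limit clause yields an honest length-$\lambda^+$ branch rather than merely unboundedly long proper initial segments of one.
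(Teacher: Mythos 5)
Your successor step is fine, but the limit step is a genuine gap, and the rank-based remedy you sketch does not close it. The $\beth$-rank device from the Erd\H{o}s--Rado proof controls the \emph{size} of a pool, but size gives no lower bound whatsoever on the size of a nested intersection: one can partition $\beth(\lambda)^+$ into $\omega$ disjoint pieces each of size $\beth(\lambda)^+$ and take $P_n$ to be the union of the pieces with index $\geq n$; each $P_n$ has full size yet $\bigcap_n P_n = \varnothing$. So an invariant of the form ``$|P_\xi|\geq\beth_{\lambda+\alpha_\xi}$'' is preserved through successor steps (in fact with essentially no loss, since $\beth(\lambda)^+ > 2^\lambda$ and the relevant beths are regular when it matters) but can collapse completely at a limit, and the ``induction on $\gamma$'' you describe has exactly the problematic limit case as its own limit case --- the circularity you flag at the end is real and unresolved, not a detail.

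The paper sidesteps the limit problem entirely by \emph{anchoring} rather than greedily refining. It first constructs an increasing sequence of ordinals $(\beta_j)_{j<\lambda^+}$ below $\beth(\lambda)^+$ by a closure process that has no pools and hence no limit issues: $\beta_j$ is chosen so that any type over $B$ together with $\leq\lambda$ of the $a_i$ with $i < \sup_{\ell<j}\beta_\ell$ that is realized anywhere in the sequence is already realized below $\beta_j$. It then takes $\gamma = \sup_j\beta_j < \beth(\lambda)^+$ and uses the single element $a_\gamma$ as an anchor: for each $j$ one picks $\delta_j$ in the interval $(\beta_j,\beta_{j+1})$ so that $a_{\delta_j}$ realizes the type of $a_\gamma$ over $B$ and the previously chosen $a_{\delta_\ell}$ ($\ell<j$), which is possible precisely because the closure condition pulls that type back below $\beta_{j+1}$. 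End-homogeneity of $X=\{\delta_j\}$ is then immediate, since for $j<j'$ both $a_{\delta_j}$ and $a_{\delta_{j'}}$ agree with the \emph{fixed} $a_\gamma$ over $Ba_{\delta_{<j}}$. Nothing decays at limits because the ``type to follow'' is fixed once and for all by $a_\gamma$, not propagated through a shrinking family of pools. If you want to rescue your pool formulation, the fix is exactly this: replace ``take the largest class'' by ``take the class of a pre-chosen anchor $a_\gamma$,'' and then you discover you need a closure sequence like $(\beta_j)$ to ensure (a) that $\gamma$ really does sit above all $\lambda^+$ stages and (b) that each successor class contains an element strictly below $\gamma$ --- at which point you have reproduced the paper's argument.
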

\begin{proof}
  This is essentially a step of a common proof of the Erd\H{o}s-Rado theorem. Define a sequence $(\beta_j)_{j < \lambda^+}$ of ordinals less than $\beth(\lambda)^+$ inductively by letting $\beta_j$ be the smallest ordinal less than $\beth(\lambda)^+$ and greater than $\sup_{\ell < j}\beta_\ell$ satisfying that for any $Y \subseteq \sup_{\ell < j} \beta_\ell$ with $|Y| \leq \lambda$, every type\footnote{This step in particular can be improved in the context of a $\lambda$-stable theory.} over $B\cup \{a_i : i \in Y\}$ that is realized by an element of the sequence $(a_i)_{i < \beth(\lambda)^+}$ is realized by some $a_i$ with $i < \beta_j$. (Such a $\beta_j < \beth(\lambda)^+$ always exists.) Since $\lambda^+ \leq \beth(\lambda)$, the sequence $(\beta_{j})_{j < \lambda^+}$ is not cofinal in $\beth(\lambda)^+$, so let $\gamma = \sup_{j < \lambda^+}\beta_j$. For each $j < \kappa^+$, let $\delta_j$ be the least ordinal greater than $\beta_j$ such that $a_{\delta_j} \equiv_{Ba_{<\beta_j}} a_\gamma$. (Such a $\delta_j$ must always exist by construction.) Let $X = \{\delta_j : j < \kappa^+\}$. Clearly $|X| = \kappa^+$ and it is easy to check that the sequence $(a_i)_{i \in X}$ is end-homogeneous over $B$.
\end{proof}

\begin{figure}
  \centering
  \adjustbox{scale=0.9}{
  \begin{tikzcd}[cramped]
      & {\text{NIP}} \\
      {\text{Stability}} & {\Cc\text{-lessness}} & {\text{NOP}_2} & {\text{NFOP}_2} & {\text{NIP}_2} & {\text{NFOP}_3} & {\text{NIP}_3} & \cdots \\
      & {\text{Treelessness}} && {\cdots} & {\text{NFOP}_k} & {\text{NIP}_k} & {\text{NFOP}_{k+1}} & {\cdots}
      \arrow["\text{\cite{Takeuchi2OrderProperty}}", from=1-2, to=2-3]
      \arrow[from=2-1, to=1-2]
      \arrow["\text{\cite[Cor~5.4]{Kaplan_2024}}"', from=2-1, to=3-2]
      \arrow["\text{Easy}", from=2-3, to=2-4]
      \arrow[from=2-4, to=2-5]
      \arrow[from=2-5, to=2-6]
      \arrow[from=2-6, to=2-7]
      \arrow[from=2-7, to=2-8]
      \arrow["\text{{Prop.~\ref{prop:treeless-NOP}}}"', from=3-2, to=2-3]
      \arrow[from=3-4, to=3-5]
      \arrow[from=3-5, to=3-6]
      \arrow[from=3-6, to=3-7]
      \arrow[from=3-7, to=3-8]
      \arrow[from=2-1, to=2-2]
      \arrow[from=2-1, to=2-2]
      \arrow[from=2-2, to=2-3]
    \end{tikzcd}
  }
  \caption{Implications between higher-arity stability notions. The implications involving $\Cc$-lessness are in \cite{ChernikovUpcoming2025}. All implications after $\NFOP_2$ are \cite[Prop.~2.8]{FOP}.}
  \label{fig:implications}
\end{figure}
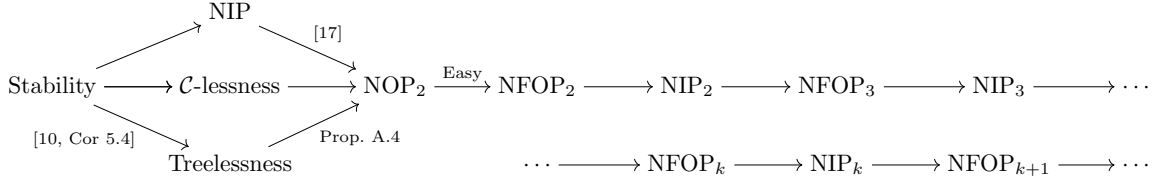

We can use this to easily prove a sloppy version of Shelah's theorem \cite[Thm.~I.2.8]{shelah1990classification} on indiscernible extraction in stable theories:

\begin{prop}\label{prop:indisc-extract-sloppy}
  Let $T$ be a stable theory with $\kappa$ a cardinal witnessing this in the sense of \cref{fact:stable-split}. Let $B$ be a set of parameters in a model of $T$. Let $\lambda$ be a fixed cardinal satisfying that $\lambda \geq |T|+|B|+\kappa$. For any sequence $(a_i)_{i < \beth(\lambda)^+}$, there is a set $X \subseteq \beth(\lambda)^+$ with $|X| = \lambda^+$ such that $(a_i)_{i \in X}$ is indiscernible over $B$.
\end{prop}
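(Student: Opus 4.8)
The plan is to combine \cref{lem:tail-indisc-extraction} with \cref{fact:indisc-tail-split}, using the non-splitting hypothesis (\cref{fact:stable-split}) to bridge the gap. First I would apply \cref{lem:tail-indisc-extraction} to the given sequence $(a_i)_{i < \beth(\lambda)^+}$ and the parameter set $B$ (noting $\lambda \geq |T|+|B|$), obtaining a set $X_0 \subseteq \beth(\lambda)^+$ with $|X_0| = \lambda^+$ such that $(a_i)_{i \in X_0}$ is end-homogeneous over $B$. After reindexing, write this subsequence as $(a'_j)_{j < \lambda^+}$, which is end-homogeneous over $B$.

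The issue is that \cref{fact:indisc-tail-split} requires $\tp(a'_j / B a'_{<j})$ to not split over $B$, whereas \cref{fact:stable-split} only gives, for each $j$, \emph{some} small set $C_j \subseteq B a'_{<j}$ of size $< \kappa$ over which the type does not split — and $C_j$ need not be contained in $B$. To fix this, I would enlarge $B$. Build an increasing continuous chain of sets $B = B^0 \subseteq B^1 \subseteq \cdots \subseteq B^n \subseteq \cdots$ (for $n < \omega$, or more carefully a chain indexed so the construction closes off) where at each stage we throw in, for every $j < \lambda^+$, a set $C_j \subseteq B^n a'_{<j}$ witnessing non-splitting of $\tp(a'_j / B^n a'_{<j})$ over $C_j$; taking $B^* = \bigcup_n B^n$, we get $|B^*| \leq \lambda$ and the property that for every $j$, $\tp(a'_j / B^* a'_{<j})$ does not split over some subset of $B^*$ — but we want non-splitting over $B^*$ itself. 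Here I would use the standard monotonicity fact: if $\tp(a/D)$ does not split over $C \subseteq D$ and $C \subseteq C' \subseteq D$, then $\tp(a/D)$ does not split over $C'$ is \emph{not} automatic, so instead I close off more carefully: ensure that for all $j$, the witnessing set $C_j$ for $\tp(a'_j/B^* a'_{<j})$ (over the final $B^*$) is already inside $B^*$, and additionally arrange (by a further closure) that $\tp(a'_j / B^* a'_{<j})$ does not split over $B^* \cap a'_{<j}$-free part, i.e. over $B^*$. The clean way: after forming $B^*$, note $\tp(a'_j / B^* a'_{<j})$ does not split over some $C_j \subseteq B^*$ with $|C_j| < \kappa$ (applying \cref{fact:stable-split} once more to the stabilized $B^*$), and since non-splitting over $C_j$ with $C_j \subseteq B^*$ gives, for any $b, b' \in B^* a'_{<j}$ with $b \equiv_{C_j} b'$, that $\varphi(a'_j, b) \leftrightarrow \varphi(a'_j, b')$ — we still cannot immediately upgrade to $\equiv_{B^*}$.

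So the actually correct route, which I would take, is: do \emph{not} try to get non-splitting over $B$, but instead restart the extraction with $B^*$ in place of $B$ in a way that preserves end-homogeneity. Concretely: apply \cref{fact:stable-split} with the set $B^* \supseteq B$ chosen (of size $\leq \lambda$) large enough that for the \emph{original} long sequence $(a_i)_{i < \beth(\lambda)^+}$ there is a fixed $C \subseteq B^*$, $|C| < \kappa$, simultaneously usable — but that is false in general too. The genuinely standard argument, and the one I would write, is the iterative one: construct by transfinite recursion a subsequence $(a_i)_{i \in X}$, $X \subseteq \beth(\lambda)^+$, $|X| = \lambda^+$, together with an increasing chain of "base" sets, such that at step $\xi$ we have chosen $a_{i_\xi}$ realizing the same type over $B \cup \{a_{i_\eta} : \eta < \xi\}$ as cofinally many elements of the original sequence (end-homogeneity) \emph{and} such that $\tp(a_{i_\xi} / B \cup \{a_{i_\eta}:\eta<\xi\})$ does not split over $B \cup \{a_{i_\eta} : \eta < \xi_0\}$ for some fixed small $\xi_0$ — actually the cleanest: choose $B$ already closed under the operation "$D \mapsto$ a non-splitting base for every type over $D$ realized in the sequence," which is possible since $\lambda^{<\kappa} $-many closure steps each add $<\kappa \le \lambda$ elements, keeping size $\le \lambda$; then run \cref{lem:tail-indisc-extraction} over this closed $B$, and now for the resulting end-homogeneous $(a'_j)_{j<\lambda^+}$, each $\tp(a'_j / B a'_{<j})$ — hmm, $B a'_{<j}$ is not closed. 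The resolution: interleave. I will construct $(a'_j)_{j < \lambda^+}$ by recursion so that $B \cup \{a'_\ell : \ell < j\}$ is always "closed" in the above sense and $a'_j$ is chosen both end-homogeneously and so that $\tp(a'_j / B \cup \{a'_\ell : \ell < j\})$ does not split over $B$ (possible because the closed set's small non-splitting base for this type is a subset of the closed set that the type does not split over, and by closure we may take it inside $B$ — arranging $B$ to absorb all such bases by iterating the closure $\lambda^+$ many times while bookkeeping the $a'_\ell$'s). Granting this, \cref{fact:indisc-tail-split} applies directly and $(a'_j)_{j<\lambda^+}$ is indiscernible over $B$, whence over the original $B$ by monotonicity of indiscernibility, giving the desired $X$ with $|X| = \lambda^+$.

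The main obstacle is precisely this bookkeeping: coordinating the closure of the base set (so that non-splitting bases land inside $B$) with the end-homogeneous choice of the $a'_j$ (which references the growing set $B \cup \{a'_\ell : \ell < j\}$) while keeping the total parameter count at $\le \lambda$ and the extracted index set at size $\lambda^+$. I expect this to be a routine but slightly fiddly simultaneous transfinite recursion, essentially merging the proof of \cref{lem:tail-indisc-extraction} with a closing-off argument rather than invoking \cref{lem:tail-indisc-extraction} as a black box. Everything else — the appeal to \cref{fact:stable-split} for the small non-splitting bases and the appeal to \cref{fact:indisc-tail-split} for the final conclusion — is immediate.
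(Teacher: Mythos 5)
Your high-level plan (end-homogeneity via \cref{lem:tail-indisc-extraction}, non-splitting via \cref{fact:stable-split}, indiscernibility via \cref{fact:indisc-tail-split}) matches the paper's, but the bridge you propose between the middle two steps does not work, and the tool the paper actually uses --- Fodor's lemma --- never appears in your argument.

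Two specific problems. First, a small one: non-splitting \emph{is} upward monotone in the base. If $\tp(a/D)$ does not split over $C$ and $C \subseteq C' \subseteq D$, then any witnessing pair $b \equiv_{C'} b'$ also satisfies $b \equiv_C b'$, so $\tp(a/D)$ does not split over $C'$ either. So the worry you raise at that point is unfounded; the real obstruction is elsewhere.

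Second, the genuine gap: the non-splitting base $C_j$ that \cref{fact:stable-split} gives for $\tp(a'_j / B a'_{<j})$ is a subset of $B a'_{<j}$, and in general it \emph{must} contain elements of $a'_{<j}$ --- this is not a defect that any closure operation performed on $B$ alone can repair. ``Closing $B$'' under non-splitting-base operations only controls types over subsets of $B$; the types you care about have domain $B a'_{<j}$, which grows with $j$ and is never a subset of your closed base. Your proposed fix of interleaving, iterating the closure $\lambda^+$ many times, and bookkeeping the $a'_\ell$'s would push $|B|$ to $\lambda^+$, at which point \cref{lem:tail-indisc-extraction} and the cardinality bookkeeping of the whole proposition break down. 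What actually makes the argument go is Fodor's lemma: for each $j$ of cofinality $\geq \kappa$, the witness $C_j$ meets $a'_{<j}$ in a set bounded below $j$, giving a regressive function $j \mapsto f(j)$ (the least $\gamma$ with $C_j \subseteq B a'_{<\gamma}$), and Fodor produces a stationary $X$ on which $f$ is constantly some $\gamma$. One then invokes \cref{fact:indisc-tail-split} over the enlarged base $B a'_{<\min X}$ (not over $B$ itself), which absorbs the finite stabilized tail of sequence parameters, and concludes indiscernibility over $B a'_{<\min X}$, hence over $B$. Without Fodor (or an equivalent stabilization), I do not see how your recursion closes off while respecting $|B| \leq \lambda$.
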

\begin{proof}
  By \cref{lem:tail-indisc-extraction}, we can find a set $Y \subseteq \beth(\lambda)^+$ with $|Y|= \lambda^+$ such that $(a_i)_{i \in Y}$ is end-homogeneous over $B$. Re-index this as a sequence $(a_i)_{i < \kappa^+}$.

  For each $i < \kappa^+$, let $f(i)$ be the least $j \leq i$ such that $\tp(a_i/Ba_{<i})$ does not split over $B$. The set $S = \{i < \lambda^+ : \cf(i) \geq \kappa\}$ is stationary in $\lambda$ (since $\cf(\lambda^+) = \lambda^+ > \kappa$). By the choice of $\kappa$, we have that $f(i)$ is a regressive function on $S$, so, by Fodor's lemma, there is a $\gamma < \lambda$ and a stationary set $X \subseteq S$ such that $f(i) = \gamma$ for any $i \in X$. We now have that for all $i \in X$, $\tp(a_i/B\cup a_{<\min X}\cup\{a_j : j < i,~j \in X\})$ does not split over $B\cup a_{<\min X}$. Therefore, by \cref{fact:indisc-tail-split}, we have that $(a_i)_{i \in X}$ is indiscernible over $Ba_{< \min X}$ and so indiscernible over $B$.
\end{proof}

Of course the actual theorem is a more precise result in that we are able to extract an indiscernible sequence of length $\lambda^+$ from any sequence of length $\lambda^+$ whenever $T$ is $\lambda$-stable, but it's important to point out that even \cref{prop:indisc-extract-sloppy} is non-trivial. Unless there is an $\omega$-Erd\H{o}s cardinal, there isn't even an analog of \cref{prop:indisc-extract-sloppy} for arbitrary theories with the weaker conclusion that $|X| = \aleph_0$.

There is a second kind of theory that has indiscernible extraction without the assumption of large cardinals. In a $k$-ary theory, a sequence is indiscernible if and only if it is $k$-indiscernible (i.e., satisfies that any two increasing subsequences of length $k$ have the same type). The Erd\H{o}s-Rado theorem easily gives the following.

\begin{fact}[Erd\H{o}s-Rado]\label{fact:Erdos-Rado-extraction}
  For any $k$-ary theory $T$ and sequence of elements $(a_i)_{i < \beth_{k}(|T|)^+}$, there is a set $X \subseteq \beth_{k}(|T|)^+$ with $|X| = \left(2^{|T|}\right)^+$ such that $(a_i)_{i \in X}$ is indiscernible.
\end{fact}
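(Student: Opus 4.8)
The plan is to invoke the Erd\H{o}s--Rado partition relation directly, coloring increasing $k$-tuples by their type over $\emptyset$. First I would set $\mu = 2^{|T|}$ and record the Beth-arithmetic identity $\beth_k(|T|) = \beth_{k-1}(2^{|T|}) = \beth_{k-1}(\mu)$, which holds since $\beth_1(|T|) = 2^{|T|}$ and $\beth_j(\beth_i(\lambda)) = \beth_{i+j}(\lambda)$. The Erd\H{o}s--Rado theorem, in the form $\beth_n(\kappa)^+ \to (\kappa^+)^{n+1}_\kappa$, applied with $n = k-1$ and $\kappa = \mu$, then gives the partition relation $\beth_k(|T|)^+ \to \bigl((2^{|T|})^+\bigr)^{k}_{2^{|T|}}$.

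Next I would define the coloring: to each increasing $k$-element subset $\{i_1 < \dots < i_k\} \subseteq \beth_k(|T|)^+$ assign the complete type $\tp(a_{i_1}\dots a_{i_k}/\emptyset)$. Since there are at most $|T|$ many formulas in $k$ fixed free variables, there are at most $2^{|T|} = \mu$ such types, so this is a coloring with at most $\mu$ colors. Applying the partition relation yields $X \subseteq \beth_k(|T|)^+$ with $|X| = (2^{|T|})^+$ homogeneous for the coloring; equivalently, any two increasing $k$-tuples from $(a_i)_{i \in X}$ realize the same type over $\emptyset$, i.e., $(a_i)_{i \in X}$ is $k$-indiscernible.

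Finally I would upgrade $k$-indiscernibility to full indiscernibility using $k$-arity. Every formula $\varphi(x_1, \dots, x_n)$ is, modulo $T$, a Boolean combination of formulas each with at most $k$ free variables; the truth value of such a subformula on an increasing $n$-tuple from $(a_i)_{i \in X}$ depends only on the type of the corresponding increasing subtuple of length $\le k$, and (since $|X| > k$) $k$-indiscernibility implies that all increasing subtuples of a given length $j \le k$ have the same type. Hence $\varphi$ has constant truth value on increasing $n$-tuples from $(a_i)_{i \in X}$, so $(a_i)_{i \in X}$ is indiscernible.

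I do not expect a real obstacle here; the only points requiring care are (i) correctly lining up the indices in the Erd\H{o}s--Rado relation --- it is $\beth_{k-1}$, not $\beth_k$, that carries the parameter $2^{|T|}$ --- and (ii) the routine verification that $k$-indiscernibility plus $k$-arity yields genuine indiscernibility, including the degenerate case $k = 1$, where the argument collapses to the pigeonhole principle on the at most $2^{|T|}$ many $1$-types realized along the sequence.
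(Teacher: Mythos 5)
Your proof is correct and follows exactly the route the paper implicitly indicates: the paper gives no explicit proof of this Fact, but the sentence immediately preceding it ("In a $k$-ary theory, a sequence is indiscernible if and only if it is $k$-indiscernible") together with the attribution to Erd\H{o}s--Rado is precisely your two-step argument (color $k$-tuples by type, apply $\beth_{k-1}(2^{|T|})^+ \to ((2^{|T|})^+)^k_{2^{|T|}}$, then upgrade $k$-indiscernibility to indiscernibility via $k$-arity). The only microscopic point left tacit in your write-up is that extending a $j$-tuple ($j<k$) to a $k$-tuple inside $X$ requires $X$ to have at least $k-j$ elements above the given ones; since $|X|=(2^{|T|})^+$ is infinite one can always discard a potential finite top segment of $X$, so this is harmless.
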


There is a relationship between Facts~\ref{fact:stable-split} and \ref{fact:Erdos-Rado-extraction}, which is that Fodor's lemma and the Erd\H{o}s-Rado theorem are in some sense different generalizations of the infinitary pigeonhole principle. Erd\H{o}s-Rado, like all Ramsey-like theorems, is a higher-arity generalization, and Fodor's lemma is a generalization that allows for an `increasing' set of pigeonholes. There are further mutual generalizations of these (i.e., partition properties that involve both higher arity and an increasing set of `hyper-pigeonholes'), but like other strong partition properties, they end up characterizing certain kinds of large cardinals.

\section{$k$-ineffability and $k$-end-homogeneity}
\label{sec:ineff-and-tail}

\begin{defn}[{Baumgartner \cite{Baumgartner1975}}]
  A function $f : [\lambda]^k \to 2^\mu$ is \emph{regressive} if $f(A) \subseteq \min A$ for all $A \in [\lambda]^k$. Given such a function $f$, a set $X \subseteq \lambda$ is \emph{$f$-homogeneous} if for any $A,B \in [\lambda]^k$, $f(A) \cap \min(A \cup B) = f(B) \cap \min(A\cup B)$.

  A limit cardinal $\mu$ is \emph{$k$-ineffable} if for any regressive function $f : [\mu]^k \to 2^\mu$, there is an $f$-homogeneous stationary set $A \subseteq \mu$. An \emph{ineffable cardinal} is a $2$-ineffable cardinal.
\end{defn}

The theory of $k$-ineffable cardinals was originally developed by Baumgartner \cite{Baumgartner1975}, but Friedman's paper \cite{Friedman_2001} is easier to find, and as such we will be citing Friedman in this paper. Any $k$-ineffable cardinal is strongly inaccessible \cite[Lem.~10]{Friedman_2001}. It's also fairly straightforward to show that if $\mu$ is $k$-ineffable, then for any $\lambda < \mu$, $\mu$ satisfies the partition relation $\mu \to (\mu)^k_{\lambda}$.

Given a sequence $(a_i)_{i < \alpha}$ and a tuple of indices $\ibar = i_0,\dots,i_{n-1}$, we will sometimes write $a_{\ibar}$ for the tuple $a_{i_0},\dots,a_{i_{n-1}}$.

\begin{defn}
  A sequence $(a_i)_{i < \mu}$ of elements is \emph{$k$-end-homogeneous over $B$} if for any $i < \mu$ and increasing $k$-tuples $\nbar,\mbar > i$, $a_{\nbar} \equiv_{Ba_{<i}} a_{\mbar}$.
\end{defn}

Some questions about $k$-end-homogeneity in a model-theoretic context are asked in \cite{Shelah2012}.

It is relatively straightforward to modify a proof of the Erd\H{o}s-Rado theorem (such as the one of \cite[Thm.~C.3.2]{tent_ziegler_2012}) to generalize \cref{lem:tail-indisc-extraction} to $k$-end-homogeneity. In particular, one can show that for any theory $T$, set of parameters $B$, and sequence $(a_i)_{i < \beth_k(|T|+|B|)^+}$, there is an $X \subseteq \beth_k(|T|+|B|)^+$ of order type $\beth(|T|+|B|)^+$ such that $(a_i)_{i \in X}$ is $k$-end-homogeneous over $B$. Our ultimate goal, however, is to extract an indiscernible sequence that is the same size as the original sequence, so we need a stronger result.

\begin{prop}\label{prop:ineff-tail}
  For any $\Lc$-theory $T$, set of parameters $B$, any $k$-ineffable cardinal $\mu > |T| + |B|$, and any sequence $(a_i)_{i < \mu}$ of $\gamma$-tuples (with $\gamma < \mu$) in a model of $T$, there is a stationary set $X \subseteq \mu$ such that $(a_i)_{i \in X}$ is $k$-end-homogeneous over $B$.
\end{prop}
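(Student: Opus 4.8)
The plan is to reduce directly to the defining partition property of $k$-ineffable cardinals by building a suitable regressive function $f\colon[\mu]^k\to 2^\mu$. For $A=\{i_0<\dots<i_{k-1}\}$ I want $f(A)$ to encode the type $\tp(a_{i_0}\cdots a_{i_{k-1}}/Ba_{<i_0})$, but since $f$ must be regressive this code has to fit inside the ordinal $\min A=i_0$, so the first (and only delicate) task is combinatorial bookkeeping. Using that any $k$-ineffable cardinal is strongly inaccessible and that $\mu>|T|+|B|+|\gamma|$, the cardinal $\theta:=|T|+|B|+|\gamma|+\aleph_0$ is $<\mu$, the infinite cardinals in $[\theta,\mu)$ form a club $E\subseteq\mu$, and for $\kappa\in E$ the set $\Pc_\kappa$ of pairs $(\psi,\jbar)$ — where $\psi(x_0,\dots,x_{k-1},y_0,\dots,y_{r-1})$ runs over $\Lc(B)$-formulas whose variables are grouped into $k+r$ many $\gamma$-tuples $x_\ell,y_\ell$ (for $r<\omega$) and $\jbar\in[\kappa]^{r}$ — has cardinality exactly $\kappa$. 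I would fix a coherent system of bijections $d_\kappa\colon\Pc_\kappa\to\kappa$ for $\kappa\in E$ (meaning $d_\kappa\subseteq d_{\kappa'}$ whenever $\kappa<\kappa'$ in $E$), built by a routine transfinite recursion along $E$ with a standard pairing function, and set $d=\bigcup_{\kappa\in E}d_\kappa$. The property I need from coherence is that, for $\kappa\in E$, one has $d(\psi,\jbar)<\kappa$ iff $(\psi,\jbar)\in\Pc_\kappa$ iff every entry of $\jbar$ is below $\kappa$: truncating a $d$-code below $\kappa$ should correspond precisely to forgetting the parameters indexed $\geq\kappa$.

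Next I would define, for $A=\{i_0<\dots<i_{k-1}\}\in[\mu]^k$ with $i_0\in E$,
\[
  f(A)=\bigl\{\,d(\psi,\jbar)\ :\ (\psi,\jbar)\in\Pc_{i_0},\ \models\psi(a_{i_0},\dots,a_{i_{k-1}},a_{\jbar})\,\bigr\}
\]
(where $a_{\jbar}=(a_{j_0},\dots,a_{j_{r-1}})$), and $f(A)=\emptyset$ if $i_0\notin E$. Then $f(A)\subseteq d[\Pc_{i_0}]=i_0=\min A$, so $f$ is regressive. Applying $k$-ineffability produces an $f$-homogeneous stationary $X_0\subseteq\mu$; I take $X=X_0\cap E$, still stationary, and note that now $\min A\in E$ for every $A\in[X]^k$, so on $[X]^k$ the value $f(A)$ equals $\{d(\psi,\jbar):(\psi,\jbar)\in\Pc_{\min A},\ \models\psi(a_A,a_{\jbar})\}$.

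To conclude I would verify $k$-end-homogeneity directly. Given $i\in X$ and increasing $k$-tuples $\nbar,\mbar$ from $X$ all of whose entries exceed $i$, $f$-homogeneity gives $f(\nbar)\cap\min(\nbar\cup\mbar)=f(\mbar)\cap\min(\nbar\cup\mbar)$, whence $f(\nbar)\cap i=f(\mbar)\cap i$ since $i<\min(\nbar\cup\mbar)$. Because $i\in E$, the coherence property yields $f(\nbar)\cap i=\{d(\psi,\jbar):(\psi,\jbar)\in\Pc_i,\ \models\psi(a_{\nbar},a_{\jbar})\}$, and as $d$ is injective this is equivalent data to $\{(\psi,\jbar)\in\Pc_i:\models\psi(a_{\nbar},a_{\jbar})\}$, which is a complete description of $\tp(a_{\nbar}/Ba_{<i})$ (every parameter lying in $Ba_{<i}$ is captured either by the choice of $\psi\in\Lc(B)$ or by an index $j_\ell<i$, after grouping coordinates into $\gamma$-tuples). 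Hence $f(\nbar)\cap i=f(\mbar)\cap i$ forces $a_{\nbar}\equiv_{Ba_{<i}}a_{\mbar}$ — in fact a hair stronger than required, the reference set being the full initial segment $a_{<i}$ rather than $(a_{i'})_{i'\in X,\,i'<i}$. Reindexing $X$ by its increasing enumeration, this is exactly the assertion that $(a_i)_{i\in X}$ is $k$-end-homogeneous over $B$.

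I expect the main obstacle to be precisely the coding step in the first paragraph: one must arrange that truncating the subset $f(A)$ at an ordinal of $X$ corresponds on the nose to truncating the associated type at an initial segment of the sequence — this is what allows the $f$-homogeneity condition, which only speaks about agreement below $\min(\nbar\cup\mbar)$, to deliver actual type equality — while simultaneously keeping $f$ regressive. Reconciling these two demands is what forces the appeal to strong inaccessibility together with a cardinal-respecting pairing function; everything after that (invoking $k$-ineffability, intersecting with a club, reindexing) is routine.
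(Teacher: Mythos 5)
Your proof is correct, and its high-level skeleton matches the paper's: define a regressive $f\colon[\mu]^k\to 2^\mu$ with $f(A)$ coding $\tp(a_{\min A},\dots/Ba_{<\min A})$ in such a way that $f(A)\cap\kappa$ codes the type over $Ba_{<\kappa}$, then invoke $k$-ineffability and intersect with a club. Where you differ is the coding mechanism. The paper interleaves formulas directly into the given sequence, producing a longer sequence $(\chi_j)_{j<\mu}$ in which each $a_\alpha$ sits at a position $j_\alpha$ and all formulas over $a_{<\alpha}$ are slotted in between; a formula's position then automatically serves as its code, and the coherence you need (truncating at a smaller ordinal corresponds to restricting to a smaller initial segment) is built into the well-ordering of positions, so it never has to be stated or verified. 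You instead construct an explicit coherent system of bijections $d_\kappa\colon\Pc_\kappa\to\kappa$ along the club of cardinals above $\theta$ and formulate the coherence requirement ($d(\psi,\jbar)<\kappa\Leftrightarrow(\psi,\jbar)\in\Pc_\kappa$ for $\kappa\in E$) as a property to be designed in. Both are sound; the paper's interleaving absorbs the bookkeeping into the construction at the cost of a somewhat opaque two-track sequence, while your explicit $d$ makes the needed coherence visible and perhaps easier to audit, at the cost of managing the transfinite recursion for $d$ (which, as you note, does go through at limits of $E$ because $\Pc$ is continuous there). One small presentational point: the paper's claim that the two tuples ``satisfy the same formulas over $a_{<\min\{i_0,i_0'\}}$'' has an off-by-one at successor ordinals, harmless for the conclusion; your version sidesteps this entirely, since $\Pc_i$ captures exactly the formulas over $Ba_{<i}$ whenever $i\in E$.
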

\begin{proof}
  By adding $B$ as a set of constants to $T$, we may assume without loss of generality that $B = \varnothing$. Fix a $k$-tuple of $\gamma$-tuples of variables $\xbar = x_0\dots x_{k-1}$. We will define a sequence $(\chi_j)_{j < \mu}$ inductively such that for each $i < \mu$, $\chi_j$ is either an element of the sequence $(a_i)_{i < \mu}$ or a formula. Moreover, we will define an increasing sequence $(j_i)_{i < \mu}$ of indices such that $\chi_{j_i} = a_i$ for each $i < \mu$.

  To do this, suppose we are at some stage $\alpha < \mu$ and suppose that we have defined $j_i$ for all $i < \alpha$ and $\chi_j$ for $j$ in some initial segment of $\mu$. Let $j_\alpha$ be the least element of $\mu$ such that $\chi_{j_\alpha}$ is not defined. Let $\chi_{j_\alpha} = a_\alpha$. Find an ordinal $\beta < \mu$ with $\beta > j_\alpha$ such that the cardinality of $\beta \setminus j_\alpha$ is the same as the cardinality of the set of formulas in the variables $\bar{x}$ with parameters among $a_{< \alpha}$. (Note that this is always possible since $\mu$ is a strongly inaccessible cardinal. Also note that $\beta$ as defined here will be $j_{\alpha + 1}$ in the next stage of the construction.) Define $\chi_{j}$ for $j \in \beta \setminus j_\alpha$ such that $(\chi_j)_{j_\alpha < j < \beta}$ is an enumeration of these formulas. Then proceed to the next stage of the construction.

  Note that in the above construction we have ensured that the set $\{j_i : i < \mu\}$ is a club (because for limit $i < \mu$, $j_i = \sup\{j_{i'} : i' < i\}$).

  Now define a regressive function $f : [\mu]^k\to 2^\mu$ in the following way:
  \begin{itemize}
  \item If any of the elements of $A \in [\mu]^k$ are formulas, then $f(A) = \varnothing$.
  \item If $A \in [\mu]^k$ is $\{\chi_{j_{i_0}},\dots,\chi_{j_{i_{k-1}}}\} = \{a_{i_0},\dots,a_{i_{k-1}}\}$ (in increasing order), then $f(A)$ is the set of $j < j_{i_0}$ such that $\chi_j$ is a formula $\varphi(x_0,\dots,x_{k-1})$ such that $\varphi(a_{i_0},\dots,a_{i_{k-1}})$ holds.
  \end{itemize}

  Now since $\mu$ is $k$-ineffable, we can find an $f$-homogeneous stationary set $Y \subseteq \mu$. Since $\{j_i : i < \mu\}$ is a club, we may assume that $Y \subseteq \{j_i : i < \mu\}$. Let $X = \{i < \mu : j_i \in Y\}$, and note that $X$ is also a stationary set. It is now immediate by construction that for any two increasing $k$-tuples of indices $i_0,\dots,i_{k-1}$ and $i'_0,\dots,i'_{k-1}$, the tuples $a_{i_0},\dots,a_{i_{k-1}}$ and $a_{i'_0},\dots,a_{i'_{k-1}}$ satisfy the same formulas over $a_{<\min\{i_0,i'_0\}}$, so the sequence $(a_i)_{i \in X}$ is $k$-end-homogeneous, as required.
\end{proof}

One thing to note is that there is a mismatch between indiscernible extraction in the context of stability and what we are doing here. In stability, one leverages $\lambda$-stability to show that one can extract a $1$-end-homogeneous sequence $(a_i)_{i \in X}$ from any sequence $(a_i)_{i < \lambda^+}$ (improving \cref{lem:tail-indisc-extraction}). Here we are leaning on $k$-ineffability since we seemingly need it as a replacement for Fodor's lemma in the proof of \cref{thm:bounded-k-splitting-indisc-extraction} anyway.

\section{Yet another higher-arity stability notion: Bounded $k$-splitting}
\label{sec:bounded-k-splitting}

We will adopt the common convention that given a list $a_0,\dots,a_{k-1}$, the expression $a_0,\dots,\hat{a}_i,\dots,a_{k-1}$ denotes the same list with the element $a_i$ removed. We will be using this notation frequently, such as in the list of variables of a type (i.e., $p(x_0,\dots,\hat{x}_i,\dots,x_{k-1})$ is a type in the variables $x_0,\dots,x_{i-1},x_{i+1},\dots,x_{k-1}$). We will also often write tuples of variables $\xbar$ as just $x$ without an over-bar, unless it is important to emphasize the fact that $\xbar$ is a tuple of variables.

\begin{defn}\label{defn:k-splitting}
  A \emph{$k$-partitioned type} is a type $p(x_0;\dots;x_{k-1})$ with a designated partition of its variables into $k$ sets.

  A $k$-partitioned type $\tp(a_0;\dots;a_{k-1}/B)$ \emph{$k$-splits} over $C \subseteq B$ if there is a formula $\varphi(x_0,\dots,x_{k-1},y)$ and $b,b' \in B$ such that
  \begin{itemize}
  \item for each $i < k$, $b \equiv_{Ca_0\dots\hat{a}_i\dots a_{k-1}} b'$ and
  \item $\varphi(a_0,\dots,a_{k-1},b)$ and $\neg \varphi(a_0,\dots,a_{k-1},b')$ hold.
  \end{itemize}
\end{defn}

As one would hope, we get a generalization of \cref{fact:indisc-tail-split} with $k$-splitting and $k$-end-homogeneity.

\begin{prop}\label{prop:k-indisc-tail-split}
  For any $k < \omega$ and $\alpha \geq k$, if $(a_i)_{i < \alpha}$ is $k$-end-homogeneous over $B$ and for each $i_0 < \dots < i_{k-1} < \alpha$, $\tp(a_{i_0};\dots;a_{i_{k-1}}/Ba_{<i_0})$ does not $k$-split over $B$, then $(a_i)_{i < \alpha}$ is indiscernible over $B$.
\end{prop}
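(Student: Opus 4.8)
The plan is to prove by induction on $n \geq k$ that for any two increasing $n$-tuples of indices $i_0 < \dots < i_{n-1}$ and $i'_0 < \dots < i'_{n-1}$ from $\alpha$, we have $a_{i_0}\dots a_{i_{n-1}} \equiv_B a_{i'_0}\dots a_{i'_{n-1}}$. The base case $n = k$ is essentially immediate from $k$-end-homogeneity applied at $i = 0$ (or at $\min(i_0, i'_0)$, using that the sequence is $k$-end-homogeneous over $B$, hence over $Ba_{<0} = B$). For the inductive step, the standard trick is to first reduce to the case where the two tuples differ in only one coordinate: since one can pass between any two increasing $n$-tuples by a sequence of moves each changing a single index (interleaving the two index sets and deleting indices one at a time from a common superset of size, say, $2n$), it suffices to handle $\bar{\imath}$ and $\bar{\imath}\,'$ that agree except in one position. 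By a symmetric interleaving argument one further reduces to the case where they agree on an initial segment $i_0 = i'_0, \dots, i_{m-1} = i'_{m-1}$ and then the remaining tails $i_m < \dots < i_{n-1}$ and $i'_m < \dots < i'_{n-1}$ are arbitrary increasing tuples above $i_{m-1}$; actually the cleanest formulation is: fix $\bar{c} = a_{i_0}\dots a_{i_{n-2}}$ and compare $\bar{c}\,a_{i_{n-1}}$ with $\bar{c}\,a_{j}$ for $j > i_{n-2}$, i.e. show the type of the last element over the first $n-1$ does not depend on which index (above the others) we pick.

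So the core claim is: if $i_0 < \dots < i_{n-2} < j$ and $i_0 < \dots < i_{n-2} < j'$, then $a_j \equiv_{B a_{i_0}\dots a_{i_{n-2}}} a_{j'}$. I would prove this by downward induction / peeling: the idea is to show $\tp(a_{i_0}\dots a_{i_{n-2}} a_j / B)$ does not $(n-k+1)$-fold... no — the right move is to exhibit an end-homogeneity-type witness directly. Here is the key step. Consider the $k$-tuple formed by the last $k-1$ of the indices $i_0,\dots,i_{n-2}$ — call them $i_{n-k}, \dots, i_{n-2}$ — together with $j$; this is an increasing $k$-tuple all of whose indices are $> i_{n-k-1}$, so by $k$-end-homogeneity the type $\tp(a_{i_{n-k}};\dots;a_{i_{n-2}}; a_j / B a_{<i_{n-k}})$ — in particular the type $\tp(a_{i_{n-k}},\dots,a_{i_{n-2}},a_j / B a_{i_0}\dots a_{i_{n-k-1}})$, since $a_{i_0}\dots a_{i_{n-k-1}} \subseteq a_{<i_{n-k}}$ up to reindexing... wait, $a_{<i_{n-k}}$ means the union of the $a_\ell$ for $\ell < i_{n-k}$, which contains $a_{i_0},\dots,a_{i_{n-k-1}}$ — is independent of the choice of $j > i_{n-2}$. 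But I need the type over the full set $B a_{i_0}\dots a_{i_{n-2}}$, not just $B a_{i_0}\dots a_{i_{n-k-1}}$, so I must propagate the extra parameters $a_{i_{n-k}},\dots,a_{i_{n-2}}$ into the base. That propagation is exactly what non-$k$-splitting buys: by the non-$k$-splitting hypothesis applied to the $k$-partitioned type $\tp(a_{i_0};\dots;a_{i_{n-k-1}}; a_j / B a_{<i_0})$... I should instead set it up as: using $k$-end-homogeneity to identify types of the "designated" $k$ coordinates over the tail, and using non-$k$-splitting (with the role of $y$ played by the extra parameters $a_{i_{n-k}}\dots a_{i_{n-2}}$, which have the same type over each sub-$(k-1)$-set by the induction hypothesis on $n-1$) to move those parameters into the base without changing which formulas hold. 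The combination is the heart of the matter.

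The main obstacle I anticipate is bookkeeping the partition and the exact parameter sets in this last step: $k$-splitting is defined with a rigid partition of the first $k$ variables and a distinguished parameter variable $y$, and one has to check that the $(k-1)$-tuples obtained by deleting one of the $k$ designated coordinates from $\{a_{i_0},\dots\}$ genuinely have equal types over the relevant base — and those equalities are supplied by the inductive hypothesis for $n-1$, so the induction has to be organized so that the $(n-1)$-case is available in the strong "over a tail" form actually needed. I would structure the write-up as: (1) reduce, via finitely many single-index changes, to comparing $a_{i_0}\dots a_{i_{n-2}}a_j$ with $a_{i_0}\dots a_{i_{n-2}}a_{j'}$; (2) by $k$-end-homogeneity get that $\tp(a_{i_{n-k}},\dots,a_{i_{n-2}}, a_j / B a_{<i_{n-k}})$ does not depend on $j$; (3) invoke non-$k$-splitting over $B$, together with the $(n-1)$-ary indiscernibility already proved, to conclude the extra parameters $a_{i_0}\dots a_{i_{n-k-1}}$ can be absorbed, giving $a_j \equiv_{B a_{i_0}\dots a_{i_{n-2}}} a_{j'}$; (4) chain these equalities to finish. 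Each of (2)-(4) is a short argument once the indices are lined up correctly; I'd spell out (3) carefully and leave (1) as the routine "interpolation" step.
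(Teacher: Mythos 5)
There is a genuine gap, and it comes in two related pieces. Your step (1) reduction to ``single-index changes that only touch the last coordinate'' is not valid. Knowing that $a_j \equiv_{Ba_{i_0}\dots a_{i_{n-2}}} a_{j'}$ for all $j,j' > i_{n-2}$ lets you replace the last entry of an increasing $n$-tuple, but to compare two arbitrary increasing $n$-tuples you still need a step of the form $a_{i_0}\dots a_{i_{n-2}}a_m \equiv_B a_{j_0}\dots a_{j_{n-2}}a_m$ (change the \emph{initial} segment while fixing a common last entry), and that cannot be obtained by chaining last-coordinate moves, nor does it follow from $(n-1)$-indiscernibility plus your core claim, since the core claim gives no information about types over a fixed tail parameter. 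This head-changing step is precisely where the non-$k$-splitting hypothesis must do its work, and it is absent from your plan.

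Relatedly, your invocation of non-$k$-splitting in steps (2)--(3) is misplaced. If $\tp(\bar c\,a_j / A) = \tp(\bar c\,a_{j'}/A)$ then automatically $\tp(a_j / A\bar c) = \tp(a_{j'}/A\bar c)$ --- the latter is determined by the former by the usual variable-shuffling argument --- so the ``parameter propagation'' you want to buy with non-$k$-splitting is free, and your core claim already follows from $k$-end-homogeneity alone. That no splitting hypothesis is consumed in establishing it is the tell that the argument cannot close. The paper's inductive step (from $(n+k)$- to $(n+k+1)$-indiscernibility) has the opposite structure: $k$-end-homogeneity replaces the last $k$ entries, $a_0\dots a_{n+k}\equiv_B a_0\dots a_n a_{i_{n+1}}\dots a_{i_{n+k}}$; then non-$k$-splitting of $\tp(a_{i_{n+1}};\dots;a_{i_{n+k}}/Ba_{<i_{n+1}})$ over $B$, using the $(n+k)$-indiscernibility hypothesis to verify the required equalities $a_0\dots a_n \equiv_{Ba_{i_{n+1}}\dots\hat a_{i_{n+j}}\dots a_{i_{n+k}}} a_{i_0}\dots a_{i_n}$, replaces the first $n+1$ entries. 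Your plan realizes only the first of these two moves, and the machinery you set aside for the second step is doing no work.
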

\begin{proof}
  Assume that $(a_i)_{i < \alpha}$ is $k$-end-homogeneous over $B$ and has the non-$k$-splitting property in the statement of the proposition. Clearly the sequence is already $k$-indiscernible over $B$ (i.e., satisfies that any two increasing subsequences of length $k$ have the same type over $B$). We will prove by induction that the sequence is $n$-indiscernible over $B$ for every $n < \omega$.

  Assume that we have shown that the sequence is $(n+k)$-indiscernible over $B$ for some $n<\omega$. Fix $i_0< \dots < i_{n+k} < \alpha$. We need to show that $a_{i_0}\dots a_{i_{n+k}} \equiv_B a_{0}\dots a_{n+k}$.

  By $k$-end-homogeneity, we have that
  \[
    a_0\dots a_{n-1}a_{n}\dots a_{n+k} \equiv_B a_0\dots a_{n}a_{i_{n+1}}\dots a_{i_{n+k}}.
  \]
  Now considered the $k$-partitioned type $p = \tp(a_{i_{n+1}};\dots;a_{i_{n+k}}/Ba_{<i_{n+1}})$. By $n$-indiscernibility, we have that for each $j \in \{1,\dots,k\}$,
  \[
    a_0\dots a_{n} \equiv_{B a_{n+1}\dots \hat{a}_{n+j}\dots a_{n+k}} a_{i_0} \dots a_{i_{n}}.
  \]
  Therefore, since $p$ does not $k$-split over $B$, we have that for any $B$-formula $\varphi(x_0,\dots,x_{k},y_{0},\dots,y_{n})$,
  \[
    \varphi(a_{i_{n+1}},\dots,a_{i_{n+k}},a_0,\dots,a_{n}) \leftrightarrow \varphi(a_{i_{n+1}},\dots,a_{i_{n+k}},a_{i_0},\dots,a_{i_n}),
  \]
  whereby $a_0\dots a_na_{i_{n+1}}\dots a_{i_{n+k}} \equiv_B a_{i_0}\dots a_{i_{n}}a_{i_{n+1}}\dots a_{i_{n+k}}$. Therefore $a_0\dots a_{n+k}$ and $a_{i_0}\dots a_{i_{n+k}}$ have the same type over $B$. Since we can do this for any such increasing $(n+1+k)$-tuple of indices, we have that the sequence is $(n+1+k)$-indiscernible over $B$. Therefore, by induction, the sequence is indiscernible over $B$.
\end{proof}

Now it would seem to make sense to introduce the following definition by direct analogy with \cref{fact:stable-split}.

\begin{defn}\label{defn:bounded-k-splitting}
  Given an ordinal $\gamma$, a theory $T$ has \emph{$\lambda$-bounded $k$-splitting for $\gamma$-tuples} if for any $k$-partitioned type $\tp(\abar_0;\dots;\abar_{k-1}/B)$ (with each $\abar_i$ a $\gamma$-tuple), there is a $C \subseteq B$ with $|C| < \lambda$ such that $\tp(\abar_0;\dots;\abar_{k-1}/B)$ does not $k$-split over $C$. $T$ has \emph{$\lambda$-bounded $k$-splitting} if for any finite $n$, $T$ has $\lambda$-bounded $k$-splitting for $n$-tuples.  $T$ has \emph{bounded $k$-splitting} if it has $\lambda$-bounded $k$-splitting for some $\lambda$. $T$ has \emph{unbounded $k$-splitting} if it does not have bounded $k$-splitting.

\end{defn}

It is immediate that $k$-arity implies bounded $k$-splitting.

\begin{prop}
  If $T$ is a $k$-ary theory, then it has $1$-bounded $k$-splitting.
\end{prop}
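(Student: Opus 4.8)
The plan is to verify directly that, for every finite $n$, no $k$-partitioned type of $n$-tuples $k$-splits over $\varnothing$; since the only $C$ with $|C| < 1$ is $C = \varnothing$, this is exactly $1$-bounded $k$-splitting for $n$-tuples, and letting $n$ range over $\omega$ then gives $1$-bounded $k$-splitting. So fix a $k$-partitioned type $\tp(\abar_0;\dots;\abar_{k-1}/B)$ with each $\abar_i$ an $n$-tuple, fix a formula $\varphi(x_0,\dots,x_{k-1},y)$ and tuples $b,b' \in B$ with $b \equiv_{\abar_0\dots\hat{\abar}_i\dots\abar_{k-1}} b'$ for every $i < k$, and aim to show $\varphi(\abar_0,\dots,\abar_{k-1},b) \leftrightarrow \varphi(\abar_0,\dots,\abar_{k-1},b')$; this is precisely the failure of the only possible witness to $k$-splitting over $\varnothing$, so establishing it for all $\varphi,b,b'$ finishes the proof.

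Using $k$-arity, fix formulas $\psi_0,\dots,\psi_{m-1}$, each with at most $k$ free variables and all with free variables among those of $\varphi$, such that $\varphi$ is equivalent modulo $T$ to a Boolean combination of $\psi_0,\dots,\psi_{m-1}$. The key point is that the variables of $\varphi$ are partitioned into the $k+1$ blocks $x_0,\dots,x_{k-1},y$, so any formula with at most $k$ free (single) variables must be disjoint from at least one entire block. Fix $j < m$ and such an omitted block. If the omitted block is $y$, then the truth value of $\psi_j$ at $(\abar_0,\dots,\abar_{k-1},-)$ does not depend on its final argument, so $\psi_j(\abar_0,\dots,\abar_{k-1},b) \leftrightarrow \psi_j(\abar_0,\dots,\abar_{k-1},b')$. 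If the omitted block is some $x_i$, then $\psi_j$ mentions only variables from $x_0,\dots,\hat{x}_i,\dots,x_{k-1}$ and $y$, and since $b$ and $b'$ have the same type over $\abar_0\dots\hat{\abar}_i\dots\abar_{k-1}$, again $\psi_j(\abar_0,\dots,\abar_{k-1},b) \leftrightarrow \psi_j(\abar_0,\dots,\abar_{k-1},b')$. Thus every $\psi_j$ takes the same truth value at $(\abar_0,\dots,\abar_{k-1},b)$ and at $(\abar_0,\dots,\abar_{k-1},b')$, and hence so does the Boolean combination $\varphi$, as required.

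I do not anticipate any serious obstacle: the whole content is the pigeonhole observation that a formula with at most $k$ free variables must avoid one of the $k+1$ variable blocks $x_0,\dots,x_{k-1},y$ outright, after which the definition of $k$-splitting over $\varnothing$ unwinds immediately. The only things that warrant a little care are that "free variables" are counted as single-sorted variables (so that counting against $k+1$ blocks is legitimate even though the $\abar_i$ and $b$ are tuples), and the routine fact that a Boolean combination of formulas that agree on a given pair of tuples also agrees on that pair.
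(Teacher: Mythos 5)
Your proof is correct and takes essentially the same approach as the paper's: both rest on the pigeonhole observation that a formula with at most $k$ free variables must entirely omit at least one of the $k+1$ blocks $x_0,\dots,x_{k-1},y$, which forces $\tp(\bbar,\abar)$ to be determined by $\tp(a_0,\dots,a_{k-1})$ together with the types $\tp(\bbar,a_0,\dots,\hat{a}_i,\dots,a_{k-1})$. The paper phrases this directly as an isolation statement at the level of types, whereas you unwind it explicitly at the level of Boolean combinations of $k$-variable formulas; this is a difference of presentation, not of substance.
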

\begin{proof}
  The $k$-arity condition implies that for any partitioned $k$-type $\tp(a_0;\dots;a_{k-1}/B)$ and any $\bbar \in B$, the type $\tp(\bbar,\abar)$ is isolated by
  \[
    \tp(a_0,\dots,a_{k-1})\cup\tp(\bbar,\hat{a}_0,\dots,a_{k-1}) \cup \cdots \cup \tp(\bbar,a_0,\dots,\hat{a}_{k-1}),
  \]
  whereby $\tp(a_0;\dots;a_{k-1}/B)$ does not $k$-split over $\varnothing$.
\end{proof}

It is also relatively straightforward to see that bounded $k$-splitting implies bounded $(k+1)$-splitting.

\begin{lem}\label{lem:k-down}
  If $\tp(a_0;\dots;a_{k-1}/B)$ $k$-splits over $C$, then $\tp(a_0;\dots;a_{k-2}/Ba_{k-1})$ $(k-1)$-splits over $Ca_{k-1}$.
\end{lem}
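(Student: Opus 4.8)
The plan is to convert the data witnessing the $k$-splitting directly into data witnessing the $(k-1)$-splitting, the only real idea being to move $a_{k-1}$ out of the list of ``variable slots'' and into the parameter set. So suppose $\varphi(x_0,\dots,x_{k-1},y)$ together with $b,b'\in B$ witnesses that $\tp(a_0;\dots;a_{k-1}/B)$ $k$-splits over $C$. I would set $\psi(x_0,\dots,x_{k-2},y,z):=\varphi(x_0,\dots,x_{k-2},z,y)$, with $z$ a fresh variable occupying the former $x_{k-1}$ slot, and propose the pair $c:=b\,a_{k-1}$ and $c':=b'\,a_{k-1}$, both of which lie in $Ba_{k-1}$, as the witnessing parameters for $(k-1)$-splitting of $\tp(a_0;\dots;a_{k-2}/Ba_{k-1})$ over $Ca_{k-1}$.

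The verification then splits into the two clauses of \cref{defn:k-splitting}. For the truth-value clause, $\psi(a_0,\dots,a_{k-2},b,a_{k-1})$ is literally $\varphi(a_0,\dots,a_{k-1},b)$, which holds, and $\neg\psi(a_0,\dots,a_{k-2},b',a_{k-1})$ is $\neg\varphi(a_0,\dots,a_{k-1},b')$, which holds. For the indistinguishability clause, fix $i<k-1$; since $a_{k-1}$ occurs identically in $c$ and $c'$ and lies in the base set, $c\equiv_{Ca_{k-1}a_0\dots\hat a_i\dots a_{k-2}}c'$ is equivalent to $b\equiv_{Ca_{k-1}a_0\dots\hat a_i\dots a_{k-2}}b'$. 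The key bookkeeping point is that for $i<k-1$ the base set here is exactly $Ca_0\dots\hat a_i\dots a_{k-1}$ (appending $a_{k-1}$ at the end changes nothing), so this is precisely one of the instances guaranteed by the hypothesis — indeed we use only $k-1$ of its $k$ instances, discarding the $i=k-1$ one.

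I do not expect any genuine obstacle here; the content is entirely the bookkeeping identity $\{a_{k-1}\}\cup(\{a_0,\dots,a_{k-2}\}\setminus\{a_i\})=\{a_0,\dots,a_{k-1}\}\setminus\{a_i\}$ for $i<k-1$, which lines up the two ``does not split over'' base sets, plus the triviality that adjoining a parameter already present in the base does not change a type. (If one prefers the splitting formula to carry parameters, one can instead take $\psi(x_0,\dots,x_{k-2},y):=\varphi(x_0,\dots,x_{k-2},a_{k-1},y)$ over $Ca_{k-1}$ with the original $b,b'$; the two formulations are interchangeable.)
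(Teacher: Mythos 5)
Your proof is correct and matches the paper's (unstated) reasoning: the paper simply asserts the lemma is immediate from the definition, and your argument is precisely the bookkeeping that makes that assertion good — move $a_{k-1}$ from a variable slot into the parameter tuple, observe that for $i<k-1$ the two base sets $Ca_{k-1}a_0\dots\hat a_i\dots a_{k-2}$ and $Ca_0\dots\hat a_i\dots a_{k-1}$ coincide, and discard the unused $i=k-1$ instance of the hypothesis. Your parenthetical alternative (absorbing $a_{k-1}$ into the formula as a parameter rather than into the witnessing tuples) is the same calculation phrased differently and is equally valid.
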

\begin{proof}
  This is immediate from the definition.
\end{proof}

\begin{prop}\label{prop:next-k}
  If $T$ has $\lambda$-bounded $k$-splitting, then it has $\lambda$-bounded $(k+1)$-splitting.
\end{prop}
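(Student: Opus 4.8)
The plan is to use \cref{lem:k-down} to reduce $(k+1)$-splitting of a type to $k$-splitting of a type with one more block absorbed into the base, and then apply $\lambda$-bounded $k$-splitting there. Fix a finite $n$ and a $(k+1)$-partitioned type $\tp(a_0;\dots;a_k/B)$ with each $a_i$ an $n$-tuple; the goal is to produce a set $C \subseteq B$ with $|C| < \lambda$ over which this type does not $(k+1)$-split. Since $n$ is arbitrary, this suffices.

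First I would move the last block into the base: applying $\lambda$-bounded $k$-splitting for $n$-tuples to the $k$-partitioned type $\tp(a_0;\dots;a_{k-1}/Ba_k)$ yields a set $C' \subseteq Ba_k$ with $|C'| < \lambda$ such that $\tp(a_0;\dots;a_{k-1}/Ba_k)$ does not $k$-split over $C'$. Now set $C := C' \cap B$, so that $C \subseteq B$ and $|C| < \lambda$. Every element of $C'$ is either already in $B$ or is a coordinate of $a_k$, so $C' \subseteq Ca_k$. Since a type that does not $k$-split over a given base also fails to $k$-split over any larger base inside the relevant parameter set (enlarging the base only shrinks the set of conjugate pairs $b,b'$ available as a splitting witness), we conclude that $\tp(a_0;\dots;a_{k-1}/Ba_k)$ does not $k$-split over $Ca_k$.

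Finally I would invoke \cref{lem:k-down} with $k$ replaced by $k+1$: its contrapositive states that if $\tp(a_0;\dots;a_{k-1}/Ba_k)$ does not $k$-split over $Ca_k$, then $\tp(a_0;\dots;a_k/B)$ does not $(k+1)$-split over $C$. That is precisely what was required, so $T$ has $\lambda$-bounded $(k+1)$-splitting for $n$-tuples, hence $\lambda$-bounded $(k+1)$-splitting.

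The only slightly delicate point — and it is minor — is that the small set $C'$ handed back by $\lambda$-bounded $k$-splitting over the enlarged parameter set $Ba_k$ need not be contained in $B$, which is why the intersection-and-re-enlargement step is needed; this works exactly because ``does not $k$-split over $D$'' is preserved when $D$ is enlarged (within the ambient parameter set). The rest is straightforward bookkeeping with \cref{defn:bounded-k-splitting} and \cref{lem:k-down}, so I do not anticipate any real obstacle.
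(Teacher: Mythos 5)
Your proof is correct and takes essentially the same route as the paper's: apply $\lambda$-bounded $k$-splitting to $\tp(a_0;\dots;a_{k-1}/Ba_k)$, observe that non-$k$-splitting persists over the enlarged base $Ca_k$, and then invoke the contrapositive of \cref{lem:k-down} with $k+1$ in place of $k$. Your extra step of intersecting the witness $C'$ with $B$ is a small but genuine point of care that the paper's one-line proof leaves implicit (its witness a priori lives in $Ba_k$, not $B$), and you correctly note that this is harmless because $C'a_k = (C' \cap B)a_k$.
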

\begin{proof}
  \begin{sloppypar}
    Fix a $(k+1)$-partitioned type $\tp(a_0;\dots;a_{k}/B)$. There is a set $C$ with $|C|< \lambda$ such that $\tp(a_0;\dots;a_{k-1}/Ba_{k})$ does not $k$-split over $C$ and therefore does not $k$-split over $Ca_{k}$. Therefore by \cref{lem:k-down}, $\tp(a_0;\dots;a_{k}/B)$ does not $(k+1)$-split over $C$. \qedhere
  \end{sloppypar}
\end{proof}

So in this way we can see that bounded $k$-splitting is a reasonable mutual generalization of stability and $k$-arity, since stability is equivalent to bounded $1$-splitting by \cref{fact:stable-split}.

In the case of stability, of course, there is a strong bound on how big $\lambda$ needs to be (i.e., if $T$ has bounded $1$-splitting, then it has $|T|^{+}$-bounded $1$-splitting). We can prove the analogous bound for $k$-splitting fairly easily, which we will use later in \cref{sec:relationship-to-other} to establish that there are $\NIP$ theories with unbounded $k$-splitting for every $k$. We will also take the opportunity to prove that bounded $k$-splitting for types in finitely many variables entails bounded $k$-splitting for types in infinitely many variables.

\begin{prop}\label{prop:bound-on-bounded-k-splitting}
  If $T$ has bounded $k$-splitting, then for any ordinal $\gamma$, $T$ has $(|T|+|\gamma|)^+$-bounded $k$-splitting for $\gamma$-tuples. In particular, if $T$ has bounded $k$-splitting, then it has $|T|^{+}$-bounded $k$-splitting.
\end{prop}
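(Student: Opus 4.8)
The plan is to reduce to types in finitely many variables and then bootstrap. Concretely, it suffices to establish: (A) if $T$ has bounded $k$-splitting, then every $k$-partitioned type $\tp(a_0;\dots;a_{k-1}/B)$ in finitely many variables fails to $k$-split over some $C\subseteq B$ with $|C|\le|T|$; and (B), granting (A), for every ordinal $\gamma$, every $k$-partitioned type $\tp(\bar{a}_0;\dots;\bar{a}_{k-1}/B)$ with each $\bar{a}_i$ a $\gamma$-tuple fails to $k$-split over some $C\subseteq B$ with $|C|\le|T|+|\gamma|$. The ``in particular'' clause is then just (B) with $\gamma$ finite. Two trivial observations I would record first: by \cref{defn:k-splitting}, if $\tp(\bar{a}_0;\dots;\bar{a}_{k-1}/B)$ $k$-splits over $C$ and $C'\subseteq C$, then it $k$-splits over $C'$ as well (restrict the base of each equivalence to $C'$), and any single witnessing formula involves only finitely many coordinates of each $\bar{a}_i$.

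For (B): given $\tp(\bar{a}_0;\dots;\bar{a}_{k-1}/B)$ with $\gamma$-tuples, run over all $k$-tuples $\bar{s}=(s_0,\dots,s_{k-1})$ of finite sub-tuples of the respective $\bar{a}_i$ --- there are at most $|\gamma|+\aleph_0$ of these --- and for each use (A) to choose $C_{\bar{s}}\subseteq B$ with $|C_{\bar{s}}|\le|T|$ over which $\tp(\bar{a}_0\res s_0;\dots;\bar{a}_{k-1}\res s_{k-1}/B)$ does not $k$-split. Put $C=\bigcup_{\bar{s}}C_{\bar{s}}$, so $|C|\le(|\gamma|+\aleph_0)\cdot|T|\le|T|+|\gamma|$. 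If $\tp(\bar{a}_0;\dots;\bar{a}_{k-1}/B)$ $k$-split over $C$, then since the witnessing formula uses only finitely many coordinates it would restrict to a witness that $\tp(\bar{a}_0\res s_0;\dots;\bar{a}_{k-1}\res s_{k-1}/B)$ $k$-splits over $C$ for the appropriate $\bar{s}$, hence over $C_{\bar{s}}\subseteq C$ by the first observation above, contradicting the choice of $C_{\bar{s}}$.

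The substance is (A), which I would prove by the ``no long splitting chain'' argument familiar from stability theory. Suppose toward a contradiction that $\tp(a_0;\dots;a_{k-1}/B)$ (in finitely many variables) $k$-splits over every $C\subseteq B$ with $|C|\le|T|$. Build an increasing continuous chain $(C_\alpha)_{\alpha<|T|^+}$ of subsets of $B$ with $C_0=\varnothing$ and $C_{\alpha+1}=C_\alpha\cup\{b_\alpha,b_\alpha'\}$, where $\varphi_\alpha,b_\alpha,b_\alpha'$ witness that $\tp(a_0;\dots;a_{k-1}/B)$ $k$-splits over $C_\alpha$; this is possible at every stage because $|C_\alpha|\le|T|$ throughout the construction. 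Since there are only $\le|T|$ possible values for $\varphi_\alpha$, the pigeonhole principle gives a single formula $\varphi$ occurring at $|T|^+$ many stages, and passing to that subchain and re-indexing we obtain a tuple $a_0,\dots,a_{k-1}$ and a sequence $(b_n,b_n')_{n<\omega}$ such that, for every $n<\omega$ and every $i<k$, $b_n\equiv_{\{b_m,b_m':m<n\}a_0\cdots\hat{a}_i\cdots a_{k-1}}b_n'$ and $\varphi(a_0,\dots,a_{k-1},b_n)\wedge\neg\varphi(a_0,\dots,a_{k-1},b_n')$ hold.

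Finally I would stretch this $\omega$-long configuration to arbitrarily large regular length $\lambda$ by compactness. The partial type in the variables $x_0,\dots,x_{k-1}$ and $(y_\alpha,y_\alpha')_{\alpha<\lambda}$ asserting $\varphi(x_0,\dots,x_{k-1},y_\alpha)\wedge\neg\varphi(x_0,\dots,x_{k-1},y_\alpha')$ for all $\alpha$, together with, for each $\alpha<\lambda$ and $i<k$, the requirement that $y_\alpha$ and $y_\alpha'$ have the same type over $\{y_\beta,y_\beta':\beta<\alpha\}\cup\{x_j:j\ne i\}$, is finitely satisfiable: a finite fragment mentions finitely many indices, which one maps order-preservingly into $\omega$, and the required type-equalities then follow by restriction from the corresponding ones in the $\omega$-chain. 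Realizing this type and renaming, we obtain $a_0,\dots,a_{k-1}$ and $(b_\alpha,b_\alpha')_{\alpha<\lambda}$ with the analogous properties; setting $B^*=\{b_\alpha,b_\alpha':\alpha<\lambda\}$, the type $\tp(a_0;\dots;a_{k-1}/B^*)$ $k$-splits over every $C\subseteq B^*$ with $|C|<\lambda$, since such a $C$ meets only $<\lambda$ of the indices and hence (by regularity of $\lambda$) lies inside $\{b_\alpha,b_\alpha':\alpha<\delta\}$ for some $\delta<\lambda$, whereupon $\varphi,b_\delta,b_\delta'$ witness $k$-splitting over $C$. As $\lambda$ was arbitrary this means $T$ has unbounded $k$-splitting, a contradiction. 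The one genuinely delicate point is this compactness stretch and checking its finite satisfiability; the remainder is bookkeeping with cardinalities and with \cref{defn:k-splitting}.
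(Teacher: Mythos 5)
Your proof is correct and follows essentially the same approach as the paper: both arguments unfold the failure of the stated bound into a long $k$-splitting chain, apply the pigeonhole principle at a regular cardinal to uniformize the witnessing formula, and then stretch the resulting configuration to arbitrary length by compactness. Your explicit two-step decomposition into (A) a $|T|^+$ bound for finite tuples and (B) a union argument reducing $\gamma$-tuples to finite ones is a slightly more modular packaging of the paper's single pass, in which the restriction to finite subtuples happens after the pigeonhole step rather than up front.
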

\begin{proof}
  Fix an ordinal $\gamma$ and let $\delta = |T|+|\gamma|$. Suppose that $T$ does not have $\delta^+$-bounded $k$-splitting for $\gamma$-tuples. Then there is a set $B$ and a $k$-partitioned type $\tp(a_0;\dots;a_{k-1}/B) \in S_{\xbar}(B)$ (with each $a_i$ a $\gamma$-tuple) such that $p$ $k$-splits over every $C \subseteq B$ with $|C| \leq \delta$. Define sequences $(b_j)_{j < \delta^+}$, $(b'_j)_{j < \delta^+}$, and $(\varphi_j(\xbar,\ybar))_{j<\delta^+}$ inductively so that for each $j < \delta^+$,
  \begin{itemize}
  \item for each $i < k$, $b_j \equiv_{a_0\dots \hat{a}_i \dots a_{k-1}b_{<j}c_{<j}} b'_j$ and
  \item $\varphi_j(a_0,\dots,a_{k-1},b_j)$ and $\neg \varphi_j(a_0,\dots,a_{k-1},b'_j)$ hold.
  \end{itemize}
  Since $\delta^+$ is a regular cardinal, there must be some fixed formula $\varphi$ such that $|\{j < \delta^+ : \varphi_j = \varphi\}| = \delta^+$. Therefore, we may assume that $\varphi_j$ is always equal to some fixed formula $\varphi$. Moreover, we may restrict the tuples $a_i$ to some finite subtuple. By compactness we can stretch this configuration to arbitrary length (since the above bulleted items and the fact that $\{\varphi(\xbar,b_j) : j < \delta^+\}\cup \{\neg \varphi(\xbar,\neg b'_j) : j < \delta^+\}$ is finitely consistent are part of the type of $b_{<\delta^+}b'_{<\delta^+}$), so for any $\lambda$, we can find a set of parameters $B$ such that that some type $p(\xbar_0;\dots;\xbar_{k-1})$ (now with each $\xbar_i$ a tuple in finitely many variables) $k$-splits over any $C \subseteq B$ with $|C| < \lambda$.

  The final statement follows by considering the case of finite $\gamma$.
\end{proof}

Unbounded $k$-splitting for $k > 1$ doesn't seem to lend itself to being `unfolded' into a combinatorial configuration in the same way that unbounded $1$-splitting can be shown to be equivalent to the order property. This is also a barrier to trying to characterize bounded $k$-splitting in terms of some kind of type counting condition. This raises the following question.

\begin{quest}
  Can bounded $k$-splitting be characterized by a more traditional combinatorial configuration (such as the \emph{$n$-patterns} of \cite{Bailetti2024})? Can it be characterized in terms of type counting?
\end{quest}

\section{Indiscernible extraction at $k$-ineffable cardinals from bounded $k$-splitting}
\label{sec:indisc-extraction-from-bounded-k-splitting}

Now we will prove our main theorem regarding extraction of indiscernible sequences at $k$-ineffable cardinals.

Recall that G\"odel defined a pairing function $\langle \cdot,\cdot \rangle : \mathrm{Ord} \times \mathrm{Ord} \to \mathrm{Ord}$ with the property that any cardinal $\lambda$ is closed under $\langle \cdot,\cdot \rangle$. Note moreover that for any cardinal $\lambda$, the set of $\alpha < \lambda$ that are closed under $g$ is a club. We will refer to an ordinal closed under $\langle \cdot,\cdot \rangle$ as \emph{$\langle \cdot,\cdot \rangle$-closed}.

\begin{thm}\label{thm:bounded-k-splitting-indisc-extraction}
  If $T$ has bounded $k$-splitting and $\mu > |T| + |B| + |\gamma|$ is $k$-ineffable, then for any sequence $(a_i)_{i < \mu}$ of $\gamma$-tuples of parameters, there is an $X \subseteq \mu$ with $|X| = \mu$ such that $(a_i)_{i \in X}$ is indiscernible over $B$.
\end{thm}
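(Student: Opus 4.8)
The plan is to run the strategy of \cref{prop:indisc-extract-sloppy} with $k$-ineffability doing double duty: once (via \cref{prop:ineff-tail}) to produce a $k$-end-homogeneous subsequence, and a second time as a substitute for Fodor's lemma in order to stabilize the non-$k$-splitting bases, after which \cref{prop:k-indisc-tail-split} finishes the job. First, by adding $B$ as constants we may assume $B = \varnothing$ (this changes neither the $k$-ineffability of $\mu$ nor the hypothesis $\mu > |T| + |\gamma|$, since $|B| < \mu$). By \cref{prop:ineff-tail} there is a stationary $Y_0 \subseteq \mu$ with $(a_i)_{i \in Y_0}$ $k$-end-homogeneous; re-indexing by the order type of $Y_0$, we may assume $(a_i)_{i < \mu}$ is itself $k$-end-homogeneous over $\varnothing$. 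Set $\delta := |T| + |\gamma|$; since $\mu$ is inaccessible and $\mu > \delta$, also $\delta^+ < \mu$, and by \cref{prop:bound-on-bounded-k-splitting} the theory $T$ has $\delta^+$-bounded $k$-splitting for $\gamma$-tuples.

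Next I would encode the non-$k$-splitting bases as a regressive function. For each increasing $\ibar = (i_0 < \dots < i_{k-1}) \in [\mu]^k$, \cref{prop:bound-on-bounded-k-splitting} — together with the observation that non-$k$-splitting is preserved when the base is enlarged, so we may take the base to be a union of full $\gamma$-tuples — yields a set $D_{\ibar} \subseteq i_0$ with $|D_{\ibar}| \le \delta$ such that $\tp(a_{i_0}; \dots; a_{i_{k-1}} / \{a_j : j < i_0\})$ does not $k$-split over $C_{\ibar} := \{a_j : j \in D_{\ibar}\}$. Define $f : [\mu]^k \to 2^\mu$ by $f(\ibar) = D_{\ibar}$; since $D_{\ibar} \subseteq i_0 = \min \ibar$, this $f$ is regressive, so $k$-ineffability provides an $f$-homogeneous stationary set $X_0 \subseteq \mu$.

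The crux is to squeeze out of $f$-homogeneity that the bases $C_{\ibar}$, for $\ibar$ ranging over $k$-tuples from a tail of $X_0$, are all equal to one fixed small set. Unwinding the definition: if $\ibar, \jbar \in [X_0]^k$ have the same minimum $\xi$, then $f(\ibar) \cap \xi = f(\jbar) \cap \xi$, and since $f(\ibar) = D_{\ibar} \subseteq \xi$ and likewise for $\jbar$, we get $D_{\ibar} = D_{\jbar}$; hence $D_{\ibar}$ depends only on $\xi = \min \ibar$, and we may write it $D^\xi$ (for $\xi \in X_0$). Applying $f$-homogeneity to $\ibar, \jbar \in [X_0]^k$ with $\min \ibar = \xi < \eta = \min \jbar$ gives $D^\xi = D^\eta \cap \xi$, so $(D^\xi)_{\xi \in X_0}$ is weakly increasing. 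A weakly increasing sequence of sets each of size $\le \delta$, indexed by the regular cardinal $\mu > \delta^+$, must be eventually constant: a strictly increasing continuation $D^{\xi_0} \subsetneq D^{\xi_1} \subsetneq \cdots$ of length $\delta^+$ would, via a choice of new element at each successor step, produce $\delta^+$ distinct elements inside a single set $D^{\xi_{\delta^+}}$ of size $\le \delta$. So fix $\zeta^* \in X_0$ and a set $D^*$ with $D^* \subseteq \zeta^*$, $|D^*| \le \delta$, and $D^\xi = D^*$ for all $\xi \in X_0$ with $\xi \ge \zeta^*$.

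Finally, put $X := X_0 \cap [\zeta^*, \mu)$ (stationary, of size $\mu$), $B^* := \{a_j : j < \zeta^*\}$, and $C^* := \{a_j : j \in D^*\} \subseteq B^*$. For any $\ibar \in [X]^k$ we have $C_{\ibar} = C^* \subseteq \{a_j : j < i_0\}$ and $\tp(a_{i_0}; \dots; a_{i_{k-1}} / \{a_j : j < i_0\})$ does not $k$-split over $C^*$; restricting the parameter set to $B^* \cup \{a_j : j \in X,\ j < i_0\}$ and then enlarging the base from $C^*$ to $B^*$ (both preserve non-$k$-splitting) shows that $\tp(a_{i_0}; \dots; a_{i_{k-1}} / B^* \cup \{a_j : j \in X,\ j < i_0\})$ does not $k$-split over $B^*$. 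Moreover $(a_i)_{i \in X}$ is $k$-end-homogeneous over $B^*$, since it was $k$-end-homogeneous over $\varnothing$ along all of $\mu$ and $B^* \cup \{a_j : j \in X,\ j < i\} \subseteq \{a_j : j < i\}$ for $i \in X$. Now \cref{prop:k-indisc-tail-split} applies to $(a_i)_{i \in X}$ over $B^*$, yielding that it is indiscernible over $B^*$, hence over $\varnothing$, hence over the original $B$. The main obstacle is the stabilization step in the third paragraph: recognizing that $k$-ineffability is exactly what allows the non-$k$-splitting bases to be made constant, and arranging the regressive coding so that $f$-homogeneity forces the $D_{\ibar}$ to be genuinely equal rather than merely nested — which works precisely because each $D_{\ibar}$ already lies below $\min \ibar$.
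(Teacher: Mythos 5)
Your proof is correct, and it diverges from the paper's argument at the crucial stabilization step. The paper encodes the non-$k$-splitting base of $\tp(a_{\ibar}/a_{<i_0})$ as the graph of a fixed-domain surjection $h_I \colon |T|^+ \to C_I$, packed into $f(I)$ via G\"odel pairing, so that $f$-homogeneity gives $h_I \subseteq h_J$ and hence $h_I = h_J$ outright (both being total functions on $|T|^+$), forcing the bases to coincide across all $I$; to keep $f$ regressive, the paper must first restrict to indices above $|T|^+$ that are closed under the pairing function, discarding the complement of a club. You instead code the index set $D_{\ibar}$ directly, accept that $f$-homogeneity only yields $D^\xi = D^\eta \cap \xi$ --- a weakly $\subseteq$-increasing chain of $\le\delta$-sized sets along $X_0$ --- and then observe that such a chain, cofinal in the regular cardinal $\mu > \delta^+$, must stabilize: a strictly increasing subchain of length $\delta^+$ would force $\delta^+$ distinct elements into a single $D^\xi$ of size $\le\delta$. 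This trades the pairing-function bookkeeping for a short cardinality argument and dispenses with the club of pairing-closed ordinals entirely. Beyond that step the two proofs run in parallel: both invoke $k$-ineffability twice (once via \cref{prop:ineff-tail}, once for the $f$-homogeneous set), both implicitly round the non-splitting base up to a union of full $\gamma$-tuples so it can be coded by indices below $\min I$, and both close via \cref{prop:k-indisc-tail-split} after enlarging the base --- you to $a_{<\zeta^*}$, the paper to $a_{\in C}$ --- which is immaterial since the conclusion is over $\varnothing$ anyway.
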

\begin{proof}
  We may assume without loss of generality that $B = \varnothing$. Let $\delta = |T|+|\gamma|$. Note that by \cref{prop:bound-on-bounded-k-splitting}, we have that $T$ has $\delta^+$-bounded $k$-splitting for $\gamma$-tuples. Since $\mu$ is strongly inaccessible, $\mu > \delta^+$ as well. By \cref{prop:ineff-tail}, we may assume without loss of generality that $(a_i)_{i < \mu}$ is $k$-end-homogeneous.

  Define a function $f : [\mu]^k \to 2^\mu$ as follows: Fix $I = \{i_0,\dots,i_{k-1}\}$ with $i_0 < \dots i_{k-1} < \mu$.
  \begin{itemize}
  \item If $i_0 \leq |T|^+$ or if $i_0$ is not $\langle \cdot,\cdot \rangle$-closed, then let $f(I) = \varnothing$.
  \item If $i_0 > |T|^+$ and $i_0$ is $\langle \cdot,\cdot \rangle$-closed, then let $C_I \subseteq i_0$ be some non-empty set of indices with $|C_I| \leq |T|^+$ satisfying that $\tp(a_{i_0};\dots;a_{i_{k-1}}/a_{<i_0})$ does not $k$-split over $\{a_j : j \in C_I\}$. Let $h_I : |T|^+ \to C_I$ be a surjection, and let $f(I) = \{\langle j,h_I(j) \rangle : j < |T|^+\}$. (Note that since $i_0$ is $\langle  \cdot, \cdot \rangle$-closed, we have that $f(I) \subseteq i_0 = \min I$.)
  \end{itemize}
  Since $\mu$ is $k$-ineffable, we can find a stationary set $X \subseteq \mu$ that is $f$-homogeneous. Since the set of $\langle \cdot,\cdot \rangle$-closed ordinals is a club in $\mu$, we may assume that every element of $X$ is $\langle \cdot,\cdot \rangle$-closed. Since $X$ is stationary, it must be unbounded in $\mu$, we may also assume that every element of $X$ is greater than $|T|^+$.

\vspace{0.5em}

\begin{claim}{}
  For any $I,J \in [X]^k$, $h_I = h_J$, and so in particular $C_I = C_J$.
\end{claim}
\begin{claimproof}
  Assume without loss of generality that $\min I \leq \min J$. By the definition of $f$-homogeneity, we have that $f(I) = f(J)\cap \min I$. Since $\langle \cdot,\cdot \rangle$ is a pairing function, this implies that $h_I \subseteq h_J$. Since $h_I$ and $h_J$ are functions with the same domain, we have that $h_I = h_J$.
\end{claimproof}

\vspace{0.5em}

Unpacking what we have done, it follows from the claim that there is a single set $C$ with $|C| = |T|^+$ of indices such that for any $\{i_0,\dots,i_{k-1}\} \in [X]^k$, $\tp(a_{i_0};\dots;a_{i_{k-1}}/a_{<i_0})$ does not $k$-split over $a_{\in C} \coloneq \{a_j : j \in C\}$. Therefore we also have that for any $\{i_0,\dots,i_{k-1}\} \in [X]^k$, $\tp(a_{i_0};\dots;a_{i_{k-1}}/a_{\in C} \cup \{a_j : j < i_0,~j \in X\})$ does not $k$-split over $a_{\in C}$. It is also immediate that the sequence $(a_i)_{i \in X}$ is $k$-end-homogeneous over $a_{\in C}$, so by \cref{prop:k-indisc-tail-split}, we have that $(a_i)_{i \in X}$ is indiscernible over $a_{\in C}$. Hence, a fortiori, $(a_i)_{i \in X}$ is indiscernible.
\end{proof}

In \cite[Lem.~13]{Friedman_2001}, it is shown that if there is an $\omega$-Erd\H{o}s cardinal $\kappa$, then there is a cardinal $\mu < \kappa$ that is $k$-ineffable for every $k$. This implies that the improved indiscernible extraction in \cref{thm:bounded-k-splitting-indisc-extraction} is non-trivial.

Recall that the partition notation $\kappa \to (\delta)_{T,n}$ (in the sense of Grossberg and Shelah \cite[Def.~3.1(2), p.~208]{Shelah1986-mn}) means that for any sequence of $n$-tuples of length $\kappa$ in a model of $T$, there is an indiscernible sequence of order type $\delta$. %

\begin{quest}\label{quest:improvement}
  Can the partition property established in \cref{thm:bounded-k-splitting-indisc-extraction} be improved? If $T$ has bounded $k$-splitting, for which $\kappa$, $\delta$, and $n$ does the partition relation $\kappa \to (\delta)_{T,n}$ hold? Does improved indiscernible extraction for theories with bounded $k$-splitting require large cardinals?
\end{quest}

It is know that in the development of simple theories, the common uses of the Erd\H{o}s-Rado theorem are unnecessary and can be replaced with Ramsey's theorem \cite{Vasey_2017}, so one might hope that a similarly careful analysis would give an analogous improvement here.

\section{An application}
\label{sec:application}

At the moment we are only able to give one model-theoretic application of \cref{thm:bounded-k-splitting-indisc-extraction} whose conclusion does not involve the concept of bounded $k$-splitting. This application is admittedly a bit underwhelming in that it is only weakening a large cardinal assumption in a result for arbitrary theories in \cite[Thm.~4.22]{BoundedUltra}. In particular, if it turns out that \cite[Thm.~4.22]{BoundedUltra} is provable without any large cardinal assumptions, then the improvements given here are trivial.

We will use the notation and definitions from \cite{BoundedUltra}, except we will systematically replace the prefix $\indbu$- with $\bu$- (e.g., an $\indbu$-Morley sequence is now a $\bu$-Morley sequence) for the sake of reducing visual clutter and improving verbal readability. %

\begin{prop}\label{prop:weak-application-I}
  Fix a theory $T$, a set of parameters $B$, and a (possibly infinite) tuple $a$. If there is a cardinal $\mu$ such that $\mu \to (\omega+\omega)_{T_B,|a|}$, then there is a total $\bu$-Morley sequence $(a_i)_{i < \omega}$ over $B$ with $a_0 = a$.

  In particular, this occurs under any of the following assumptions:
  \begin{enumerate}
  \item\label{case-1} There is a cardinal $\mu$ satisfying $\mu \to (\omega+\omega)^{<\omega}_{\beth(|T|+|Ba|)}$.
  \item\label{case-2} $T$ has bounded $k$-splitting and there is a $k$-ineffable cardinal $\mu > |T|+|Ba|$.
  \item\label{case-4} $T$ is $k$-ary.\footnote{The fact that the $k$-ary case does not require a large cardinal really ought to have been pointed out in \cite{BoundedUltra} but didn't occur to the author at the time.}
  \end{enumerate}

\end{prop}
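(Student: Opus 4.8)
The plan is to inspect the proof of \cite[Thm.~4.22]{BoundedUltra} and isolate the one step where a large cardinal is used --- the extraction of a $B$-indiscernible sequence of order type $\omega+\omega$ from a long sequence of realizations of $\tp(a/B)$ --- and replace it by the hypothesis $\mu \to (\omega+\omega)_{T_B,|a|}$. Concretely: working in a monster model $\mathfrak{M} \models T$, I would first reproduce the construction of \cite[Thm.~4.22]{BoundedUltra}, obtaining a sequence $(c_i)_{i<\mu}$ with $c_0 = a$, each $c_i \models \tp(a/B)$, and each $c_i$ ``$\bu$-free'' over $B$ with respect to $c_{<i}$, arranged so that the leading $\omega$-block of any $B$-indiscernible subsequence of order type $\omega+\omega$ is a total $\bu$-Morley sequence over $B$ once its first term is normalized. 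Since each $c_i \models \tp(a/B)$, applying $\mu \to (\omega+\omega)_{T_B,|a|}$ to $(c_i)_{i<\mu}$ --- read as a sequence of $|a|$-tuples in $\mathfrak{M}_B \models T_B$ --- yields an increasing family of indices $(i_\xi)_{\xi<\omega+\omega}$ with $(c_{i_\xi})_{\xi<\omega+\omega}$ indiscernible over $B$; by monotonicity of $\bu$-independence the leading $\omega$-block inherits $\bu$-freeness at every stage, while the trailing $\omega$-block (indiscernible over $B$ together with the leading one) supplies the witnesses certifying that $(c_{i_\xi})_{\xi<\omega}$ is a \emph{total} $\bu$-Morley sequence over $B$. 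Finally, picking $\sigma \in \mathrm{Aut}(\mathfrak{M}/B)$ with $\sigma(c_{i_0}) = a$ and applying it --- total $\bu$-Morley-ness over $B$ being $\mathrm{Aut}(\mathfrak{M}/B)$-invariant --- gives the required sequence with $a_0 = a$.

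It then remains to check that each of the three itemized hypotheses yields $\mu \to (\omega+\omega)_{T_B,|a|}$. For (\ref{case-1}): for each finite $n$ there are at most $2^{|T|+|Ba|} \le \beth(|T|+|Ba|)$ complete types over $\varnothing$ in $n$ many $|a|$-tuples of variables in the language of $T_B$, so coloring each increasing finite subtuple of a length-$\mu$ sequence of $|a|$-tuples by its type and invoking $\mu \to (\omega+\omega)^{<\omega}_{\beth(|T|+|Ba|)}$ produces an order-type-$(\omega+\omega)$ subsequence homogeneous for all finite exponents at once, i.e.\ indiscernible over $B$. For (\ref{case-2}): this is exactly \cref{thm:bounded-k-splitting-indisc-extraction} with parameter set $B$ and $\gamma = |a|$, since $\mu > |T|+|Ba| = |T|+|B|+|a|$ being $k$-ineffable gives a size-$\mu$ subsequence indiscernible over $B$, and $\mu$, being strongly inaccessible, exceeds $\omega+\omega$. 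For (\ref{case-4}): if $T$ is $k$-ary then so is $T_B$ --- substituting constants from $B$ for some free variables of a Boolean combination of formulas each with at most $k$ free variables again gives a Boolean combination of $\Lc(B)$-formulas each with at most $k$ free variables --- and in a $k$-ary theory $k$-indiscernibility implies indiscernibility, so Erd\H{o}s--Rado (\cref{fact:Erdos-Rado-extraction}, applied in $T_B$ with the $|a|$-tuples named as elements) already produces, with no large cardinal, a $B$-indiscernible subsequence of order type $\omega+\omega$ inside any sequence of length $\mu = \beth_k(|T|+|Ba|)^+$.

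The main obstacle I anticipate is the first part: being precise about where \cite[Thm.~4.22]{BoundedUltra} uses a partition property, and confirming that extracting merely an order-type-$(\omega+\omega)$ $B$-indiscernible subsequence of the ``$\bu$-free'' sequence of realizations of $\tp(a/B)$ really does leave its leading $\omega$-block a \emph{total} $\bu$-Morley sequence over $B$ --- in particular that the monotonicity and $\mathrm{Aut}(\mathfrak{M}/B)$-invariance of $\bu$-independence behave as needed, both for inheriting the property along the subsequence and for the final $a_0 = a$ normalization. Granting that, the three cases are routine; in particular (\ref{case-2}) is an immediate corollary of \cref{thm:bounded-k-splitting-indisc-extraction} and (\ref{case-4}) needs no large cardinal at all, which is the point of the accompanying footnote.
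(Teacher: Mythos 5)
Your overall plan---isolate the step in \cite{BoundedUltra} where indiscernible extraction is used, replace it by the hypothesized partition relation, and then check that each of the three numbered cases yields that relation---matches the paper's strategy, and your treatment of the three cases is essentially what the paper does (you spell out (\ref{case-1}) in more detail than the paper's ``immediate,'' you apply \cref{thm:bounded-k-splitting-indisc-extraction} for (\ref{case-2}) exactly as the paper does, and for (\ref{case-4}) you correctly note that $k$-arity of $T_B$ plus Erd\H{o}s--Rado suffices with no large cardinal).

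The gap is in the main argument, and you flag it yourself. The object the paper manipulates is not a linear sequence $(c_i)_{i<\mu}$ drawn from the proof of \cite[Thm.~4.22]{BoundedUltra}, but a \emph{tree} $(a_f)_{f\in\Tc_\mu}$ produced by \cite[Prop.~4.14]{BoundedUltra}, $\bu$-spread-out and $s$-indiscernible over $B$ with each $a_f \equiv_B a$. The partition hypothesis is applied to the spine sequence $(a_{\zeta^\mu_\beta})_{\beta<\mu}$ of this tree; after extracting $X$ of order type $\omega+\omega$, the paper splits $X$ into a leading $\omega$-block $Y$ and a tail block $Z_0$, sets $\gamma = \min Z_0$ and $Z = Z_0\setminus\{\gamma\}$, and then uses the tree structure in an essential way: $(a_{\trianglegeq\zeta^\mu_\gamma\concat i})_{i<\omega}$ is a $\bu$-Morley sequence over $B$ that is also $B\cup\{a_{\zeta^\mu_\alpha}:\alpha\in Z\}$-indiscernible, and the $Y$-elements $\{a_{\zeta^\mu_\alpha}:\alpha\in Y\}$ lie \emph{inside} it. Monotonicity of $\indbu$ together with \cite[Lem.~4.11]{BoundedUltra} then gives $\{a_{\zeta^\mu_\alpha}:\alpha\in Z\} \indbu_B \{a_{\zeta^\mu_\alpha}:\alpha\in Y\}$, and \cite[Thm.~4.9]{BoundedUltra} upgrades the $Z$-block---not the leading block, as you guessed---to a \emph{total} $\bu$-Morley sequence, which is then normalized by an automorphism over $B$. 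Your sentence ``by monotonicity of $\bu$-independence the leading $\omega$-block inherits $\bu$-freeness at every stage, while the trailing $\omega$-block supplies the witnesses'' has the two blocks' roles reversed and, more importantly, does not identify the mechanism (the auxiliary $\bu$-Morley sequence of children sitting underneath a tail node of the extracted set) by which totality is actually certified. Without the tree there is no such auxiliary sequence hanging off a single node, so the step you describe as ``the main obstacle'' is genuinely where the sketch stalls; it is not a bookkeeping detail but the substantive content that \cite[Prop.~4.14]{BoundedUltra}, \cite[Lem.~4.11]{BoundedUltra}, and \cite[Thm.~4.9]{BoundedUltra} supply.
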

\begin{proof}
  By \cite[Prop.~4.14]{BoundedUltra}, we can find a $\bu$-spread-out and $s$-indiscernible over $B$ tree $(a_f)_{f \in \Tc_\mu}$ such that for each $f \in \Tc_\mu$, $a_f \equiv_B a$. By assumption, we can find an $X \subseteq \mu$ with order type $\omega+\omega$ such that the sequence $(a_{\zeta^\mu_{\beta}})_{\beta \in X}$ is $B$-indiscernible. Let $Y \subseteq X$ be first $\omega$ elements of $X$. Let $Z_0 \subseteq X \setminus Y$ be the first $\omega$ elements of $X \setminus Y$. Let $\gamma = \min Z_0$ and $Z = Z_0 \setminus \{\gamma\}$. Note that by construction $\sup Y \leq \gamma$.

  It is now immediate that $(a_{\trianglegeq\zeta^\mu_{\gamma}\concat i})_{ i < \omega}$ is a $\bu$-Morley sequence over $B$ which is moreover $B \cup \{a_{\zeta^\mu_\alpha} : \alpha \in Z\}$-indiscernible. Note that $\{a_{\zeta^\mu_\alpha} : \alpha \in Y\}$ is contained in $(a_{\trianglegeq\zeta^\mu_{\gamma}\concat i})_{ i < \omega}$. Therefore by monotonicity of $\indbu$ and \cite[Lem.~4.11]{BoundedUltra}, we have that  $\{a_{\zeta^\mu_\alpha} : \alpha \in Z\} \indbu_B \{a_{\zeta^\mu_\alpha} : \alpha \in Y\}$. By \cite[Thm.~4.9]{BoundedUltra}, we have that $\{a_{\zeta^\mu_\alpha} : \alpha \in Z\}$ is a total $\bu$-Morley sequence. By applying an automorphism fixing $B$ pointwise, we get the required sequence $(a_i)_{i < \omega}$.

  For the specific numbered assumptions, (\ref{case-1}) is immediate, (\ref{case-2}) follows from \cref{thm:bounded-k-splitting-indisc-extraction}, 
  and for (\ref{case-4}), choose some sufficiently large $\mu$ and use the Erd\H{o}s-Rado theorem. %
\end{proof}

\begin{cor}\label{cor:weak-application-II}
  Fix a theory $T$, a set of parameters $B$, and a (possibly infinite) tuple $a$. If any of the assumptions in \cref{prop:weak-application-I} hold, 
  then there is an infinite $B$-indiscernible sequence $I$ with $a \in I$ such that for any $c$, $a$ and $c$ have the same Lascar strong type over $B$ if and only if there are infinite sequences $J_0,K_0,J_1,\dots,K_{n-1},J_n$ such that $a \in J_0$, $c \in J_n$, and for each $i < n$, $J_i + K_i$ and $J_{i+1} +K_i$ are $B$-indiscernible and have the same Ehrenfeucht-Mostowski type as $I$.
\end{cor}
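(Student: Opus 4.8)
The plan is to leverage \cref{prop:weak-application-I} to produce the seed indiscernible sequence $I$ and then reduce the Lascar strong type characterization to a known theorem about total $\bu$-Morley sequences from \cite{BoundedUltra}. First I would apply \cref{prop:weak-application-I} to get a total $\bu$-Morley sequence $(a_i)_{i < \omega}$ over $B$ with $a_0 = a$, and take $I = (a_i)_{i<\omega}$. So $I$ is a $B$-indiscernible sequence containing $a$; fix its Ehrenfeucht--Mostowski type $\Phi = \operatorname{EM}(I/B)$. The claim is then that for any $c$, $a \mathrel{\equiv^{\mathrm{Ls}}_B} c$ if and only if there is a finite chain of infinite sequences $J_0, K_0, J_1, \dots, K_{n-1}, J_n$ with $a \in J_0$, $c \in J_n$, and $J_i + K_i$, $J_{i+1} + K_i$ both $B$-indiscernible of EM-type $\Phi$ for each $i < n$.

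For the backward direction I would argue that each link of such a chain forces equality of Lascar strong types: if $J_i + K_i$ and $J_{i+1} + K_i$ are $B$-indiscernible with a common infinite $K_i$, then (by the standard fact that elements beginning $B$-indiscernible sequences with a common tail lie in the same Lascar strong type class, or more precisely using that $J_i + K_i$ witnesses $\operatorname{hd}(J_i) \equiv^{\mathrm{Ls}}_B \operatorname{hd}(K_i)$ and similarly for $J_{i+1}$) the heads of $J_i$ and $J_{i+1}$ have the same Lascar strong type over $B$; chaining these through $J_0 \ni a$ and $J_n \ni c$ gives $a \equiv^{\mathrm{Ls}}_B c$. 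Since Lascar strong type equality is precisely equality in the finest bounded $B$-invariant equivalence relation, this direction should be essentially formal once I cite the right lemma relating indiscernible sequences with common tails to Lascar distance.

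The forward direction is where the total $\bu$-Morley hypothesis does the real work, and it is the main obstacle. I would invoke the key property of total $\bu$-Morley sequences established in \cite{BoundedUltra}: if $I$ is a total $\bu$-Morley sequence over $B$ then it is (an instance of) a so-called ``Lascar--witnessing'' sequence — more precisely, there should be a result in \cite{BoundedUltra} (in the vicinity of the $\indbu$-theory developed in Section~4) saying that total $\bu$-Morley sequences have Lascar distance bounded by a fixed finite number, or that any two realizations of a total $\bu$-Morley EM-type that share infinitely many terms can be connected by a short zig-zag of indiscernibles. Concretely, given $c$ with $a \equiv^{\mathrm{Ls}}_B c$: by definition of Lascar strong type there is a finite chain of $B$-indiscernible sequences connecting $a$ to $c$; I then want to upgrade each such indiscernible sequence to one of EM-type $\Phi$ with an overlapping tail, using stretching/compactness and the ``stationarity''/base-monotonicity of $\bu$-independence plus the extraction principle from \cref{prop:weak-application-I} to realize the relevant EM-type on top of the given data. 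The delicate point is controlling the number $n$ of links and ensuring each intermediate $J_i + K_i$ can genuinely be taken of EM-type $\Phi$ — this is exactly the content that makes total $\bu$-Morley sequences ``total,'' and I expect the proof to be a direct translation of \cite[Thm.~4.9]{BoundedUltra} (or a corollary thereof) into this Lascar-distance statement, with the new input here being only that the hypotheses of \cref{prop:weak-application-I} — in particular case (\ref{case-2}), bounded $k$-splitting plus a $k$-ineffable cardinal — suffice to produce the sequence $I$ in the first place.

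Thus the structure of the write-up is: (1) produce $I$ via \cref{prop:weak-application-I}; (2) cite the \cite{BoundedUltra} characterization of Lascar strong type over $B$ in terms of chains of indiscernibles with the EM-type of a total $\bu$-Morley sequence; (3) spell out the two directions, with the backward direction a short formal chase and the forward direction an appeal to totality of the $\bu$-Morley sequence. I would double-check that the $n$ and the shape of the chain ($J_0,K_0,\dots,K_{n-1},J_n$ with the overlapping-tail condition) match verbatim the statement in \cite[Thm.~4.9]{BoundedUltra} or whichever result there packages total $\bu$-Morley sequences with Lascar distance, so that \cref{cor:weak-application-II} is genuinely just that theorem with the large-cardinal hypothesis relaxed to the cases listed in \cref{prop:weak-application-I}.
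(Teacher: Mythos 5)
Your proposal takes essentially the same route as the paper: produce the total $\bu$-Morley sequence $I$ via \cref{prop:weak-application-I} and then invoke a result of \cite{BoundedUltra} that characterizes Lascar strong type in terms of chains of indiscernibles with the EM-type of a total $\bu$-Morley sequence. The only difference is that you were unsure of the exact citation (you guessed \cite[Thm.~4.9]{BoundedUltra}, which is actually used inside the proof of \cref{prop:weak-application-I} itself); the result you want is \cite[Prop.~4.3]{BoundedUltra}, and once that is plugged in, the corollary really is immediate — the forward/backward sketching you outline is work already packaged inside that proposition and need not be redone here.
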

\begin{proof}
  This is immediate from \cref{prop:weak-application-I} and \cite[Prop.~4.3]{BoundedUltra}.
\end{proof}

So in other words, under the assumptions of \cref{prop:weak-application-I}, there are indiscernible sequences that universally witness Lascar strong type in a strong way.\footnote{Ordinarily, $a$ and $a'$ having the same Lascar strong type over $B$ is witnessed by some sequence of indiscernible sequences $I_0,\dots,I_{n-1}$ with $a \in I_0$, $a' \in I_{n-1}$, and $I_i \cap I_{i+1} \neq \varnothing$ for all $i < n$, with no restriction on the Ehrenfeucht-Mostowski types of the $I_i$'s and no additional information of the form of $I_i \cap I_{i+1}$.} It seems plausible that \cref{prop:weak-application-I} and \cref{cor:weak-application-II} can be weakened to the assumption of the existence of a $k$-subtle cardinal, possibly by using something like \cite[Lem.~6]{Friedman_2001}.

\section{A family of examples}
\label{sec:examples}

One of the basic examples of a stable theory is theory of an equivalence relation. One way to understand the fact that this theory is stable is that it is `morally' a unary theory:\footnote{Although note that \cite{FOP} takes the perspective that stable theories are morally binary (and more generally that $\NFOP_k$ theories are morally $(k+1)$-ary). This difference has to do with how one conceptualizes the role of parameters. For the purposes of this paper, however, the use of Fodor's lemma in \cref{prop:indisc-extract-sloppy} clearly has a unary character to it.} The type of an element is uniquely determined by its `color' (i.e., the equivalence class that it is in). The only reason that the theory isn't literally unary is that the number of colors can `grow with the model.'

We can use this point of view to define analogous theories of higher arity, yielding examples of theories with bounded $k$-splitting that are neither stable nor $k$-ary. The idea is to simply consider a colored complete $k$-ary hypergraph but with a number of colors that can also `grow with the model,' which we implement with an equivalence relation on unordered $k$-tuples. This is similar to the example discussed in \cite[Prop.~3.29]{FOP} (which is a generic linear order on $k$-tuples from $k$ distinct sorts).

\begin{defn}
  Fix $k \geq 2$. Let $\Lc^{\Equiv}_k$ be a language with a $2k$-ary relation symbol $E(\xbar,\ybar)$. Let $\Equiv^{0}_k$ be the $\Lc^{\Equiv}_k$-theory that states
  \begin{itemize}
  \item $E$ is a partial equivalence relation on $k$-tuples,
  \item $E(\xbar,\xbar)$ if and only if $x_i \neq x_j$ for all $i < j < k$, and
  \item if $x_i \neq x_j$ for all $i < j < k$, then $E(\xbar,x_{\sigma(0)}\dots x_{\sigma(k-1)})$ for every permutation $\sigma : k \to k$.
  \end{itemize}
\end{defn}

\begin{prop}
  The finite models of $\Equiv^{0}_k$ form a \Fraisse\ class.
\end{prop}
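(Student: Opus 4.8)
The plan is to verify the defining properties of a \Fraisse\ class: closure under isomorphism (trivial), having only countably many isomorphism types of members (automatic, since $\Lc^{\Equiv}_k$ is a finite relational language, so there are only finitely many structures on each finite universe), the hereditary property (HP), the joint embedding property (JEP), and the amalgamation property (AP). It is convenient to record a dictionary first. Every axiom of $\Equiv^{0}_k$ is universal, and combining symmetry and transitivity of $E$ with the biconditional $E(\xbar,\xbar)\leftrightarrow\bigwedge_{i<j<k}x_i\neq x_j$ shows that in any model $E$ holds only between $k$-tuples of pairwise distinct elements, holds between every such tuple and itself, and is invariant under separately permuting the coordinates of either argument; hence $E$ descends to an equivalence relation $\sim$ on the set $[M]^k$ of $k$-element subsets of $M$, and conversely any equivalence relation on $[M]^k$ comes from a unique model structure. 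So a model of $\Equiv^{0}_k$ is the same thing as a pair $(M,\sim)$ with $\sim$ an equivalence relation on $[M]^k$, and an embedding of such structures is exactly an injection of underlying sets whose induced map on $k$-element subsets both preserves and reflects $\sim$.

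Given the dictionary, HP is immediate, since $\Equiv^{0}_k$ is universally axiomatized and thus substructures of finite models are finite models. For AP, suppose finite models $(A,\sim_A)$, $(B,\sim_B)$, $(C,\sim_C)$ are given with embeddings $A\hookrightarrow B$ and $A\hookrightarrow C$; after renaming we may assume $A=B\cap C$ and that both embeddings are inclusions, so that $\sim_B$ and $\sim_C$ both restrict to $\sim_A$ on $[A]^k$. Put $D=B\cup C$, and let $\sim_D$ be the least equivalence relation on $[D]^k$ containing $\sim_B$ and $\sim_C$ (as subsets of $[D]^k\times[D]^k$); concretely, $S\sim_D T$ holds iff $S=T$ or there is a finite chain $S=S_0,S_1,\dots,S_n=T$ in $[D]^k$ in which each consecutive pair is either $\sim_B$-related with both terms in $[B]^k$, or $\sim_C$-related with both terms in $[C]^k$. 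Then $(D,\sim_D)$ is a model of $\Equiv^{0}_k$, the inclusions $B\hookrightarrow D$ and $C\hookrightarrow D$ agree on $A$, and what remains is to see that they are \emph{embeddings}, i.e.\ that $\sim_D$ restricted to $[B]^k$ is exactly $\sim_B$ and similarly for $C$.

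This last verification is the one substantive point, and the step I expect to be the main (if modest) obstacle: one must rule out that a chain as above connects two elements $S,T\in[B]^k$ that are not $\sim_B$-related by shortcutting through $[C]^k$. The key observation is that any maximal run of consecutive $\sim_C$-steps in such a chain has both of its endpoints lying in $[C]^k\cap[B]^k$: an endpoint is either $S$ or $T$ itself (hence in $[B]^k$ by hypothesis), or it is adjacent in the chain to a $\sim_B$-step (hence in $[B]^k$); and a $k$-subset of $D$ lying in both $B$ and $C$ lies in $A$. Since $\sim_C$ and $\sim_B$ agree on $[A]^k$, the endpoints of the run are already $\sim_B$-related, so the whole run may be replaced by a single $\sim_B$-step. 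Collapsing every $\sim_C$-run turns the chain into a $\sim_B$-chain from $S$ to $T$, so $S\sim_B T$; hence $\sim_D$ and $\sim_B$ agree on $[B]^k$, and symmetrically $\sim_D$ and $\sim_C$ agree on $[C]^k$, establishing AP. Finally, the empty structure is a (vacuously) valid model of $\Equiv^{0}_k$ that embeds into every model, so JEP is the instance of AP with $A$ empty, and the class contains members of every finite cardinality (e.g.\ an $n$-element set with the discrete equivalence relation on its $k$-subsets); therefore the finite models of $\Equiv^{0}_k$ form a \Fraisse\ class.
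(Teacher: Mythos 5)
Your proof is correct, and it gives the detailed verification the paper dismisses as ``straightforward.'' The paper asserts that finite models of $\Equiv^0_k$ have \emph{free} amalgamation; your argument implicitly sharpens this. In the strict sense in which a relational class has free amalgamation (take $D = B \cup_A C$ with $E^D = E^B \cup E^C$ plus the forced reflexive pairs), this fails for $\Equiv^0_k$: if $\bar a \in [B]^k\setminus[A]^k$ and $\bar c \in [C]^k\setminus[A]^k$ are both $E$-equivalent to the same $\bar b\in[A]^k$, then transitivity demands $E^D(\bar a,\bar c)$, a pair lying in neither $[B]^k\times[B]^k$ nor $[C]^k\times[C]^k$. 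The correct construction is exactly yours: take $\sim_D$ to be the equivalence relation on $[D]^k$ \emph{generated} by $\sim_B\cup\sim_C$, and the one genuinely substantive point is your chain-collapse argument showing that this closure does not relate $k$-subsets of $B$ that were not already $\sim_B$-related (isolate maximal $\sim_C$-runs, observe their endpoints must lie in $[B]^k\cap[C]^k=[A]^k$, and use that $\sim_B$ and $\sim_C$ agree there to replace each run by a single $\sim_B$-step). Your preliminary dictionary identifying $\Equiv^0_k$-models with equivalence relations on $[M]^k$, the observation that HP follows from $\Equiv^0_k$ being universal, and the reduction of JEP to AP over the empty structure are all fine. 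So your proof is the paper's argument in spirit, with the added value of making precise which construction ``free amalgamation'' must refer to for the claim to go through.
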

\begin{proof}
  It is straightforward to check that models of $\Equiv^{0}_k$ have free amalgamation. The other properties of a \Fraisse\ class are obvious.
\end{proof}

Let $\Equiv_k$ be the theory of the \Fraisse\ limit of the class of finite models of $\Equiv^{0}_k$. Note that $\Equiv_k$ is $\aleph_0$-categorical and has quantifier elimination.

\begin{lem}\label{lem:T-k-small-trivial}
  For all $\ell < 2k$, two $\ell$-tuples $\abar$ and $\bbar$ in a model of $\Equiv_k$ have the same type if and only if they have the same type in the equational reduct.
\end{lem}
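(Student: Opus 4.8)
The plan is to use the quantifier elimination for $\Equiv_k$ recorded above. By quantifier elimination, two $\ell$-tuples $\abar$ and $\bbar$ have the same type precisely when they satisfy the same atomic $\Lc^{\Equiv}_k$-formulas in the variables $x_0,\dots,x_{\ell-1}$, and such atomic formulas are exactly the equalities $x_i=x_j$ together with the relational instances $E(x_{i_0}\dots x_{i_{k-1}},x_{j_0}\dots x_{j_{k-1}})$ whose indices all lie in $\{0,\dots,\ell-1\}$. The equalities are controlled by the type of $\abar$ in the equational reduct by definition, so the content of the lemma is that the truth value of each such $E$-instance is also a function of the equational-reduct type. (The reverse implication is trivial, since the equational reduct cannot distinguish more than the full structure does.)

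First I would dispose of the instances whose truth value is already dictated by the axioms of $\Equiv^0_k$. From symmetry and transitivity of $E$ and the first two axioms, an instance $E(x_{\ibar},x_{\jbar})$ can hold of $\abar$ only if the entries of $a_{\ibar}$ are pairwise distinct and the entries of $a_{\jbar}$ are pairwise distinct, which is a condition visible in the equational-reduct type; if it fails, the instance is false of both $\abar$ and $\bbar$. When it holds, the permutation axiom together with reflexivity forces the instance to be \emph{true} whenever $a_{\ibar}$ and $a_{\jbar}$ enumerate the same set of elements, again a condition read off the equational-reduct type. This leaves a single case: an instance $E(x_{\ibar},x_{\jbar})$ in which both $k$-subtuples have pairwise distinct entries but enumerate distinct sets. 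The lemma thus reduces to the claim that, when $\ell<2k$, every such ``cross'' instance has the same truth value on every $\ell$-tuple realising the given equational type.

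I would prove that claim by working inside the \Fraisse\ limit and using its ultrahomogeneity: it suffices to show that for $\ell$-tuples $\abar$, $\bbar$ of the same equational type with $\ell<2k$, the bijection $a_i\mapsto b_i$ is automatically a partial isomorphism, since then an automorphism of the limit carries $\abar$ to $\bbar$. Equivalently, one must show that the isomorphism type of the substructure induced on any set of fewer than $2k$ elements is determined by its equality pattern. I expect this combinatorial core to be the main obstacle, and it is exactly where the hypothesis $\ell<2k$ is meant to enter: the threshold $2k$ is presumably what is needed to accommodate two disjoint $k$-tuples, and the work is to check — by unwinding the equivalence-relation and permutation axioms against an ambient set too small to contain such a pair, using the free-amalgamation description of the class — that no chain of forced identifications can link two distinct $k$-subsets, so that the equality pattern pins down the full atomic diagram. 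Once this is in hand, the lemma follows immediately from the case analysis of the previous paragraph.
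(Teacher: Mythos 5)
Your quantifier-elimination reduction is correct and is in fact more careful than the paper's: the paper's entire proof is the single assertion that the axioms of $\Equiv^{0}_k$ leave no room for any $E$-instance among fewer than $2k$ elements, followed by ``the result now follows from quantifier elimination.'' That assertion is precisely the ``combinatorial core'' you isolated and declined to close, and your hesitation is well placed, because it does not hold in the range $k < \ell < 2k$. Both your heuristic (``$2k$ is what is needed to accommodate two disjoint $k$-tuples'') and the paper's one-liner overlook \emph{overlapping} $k$-subtuples. For $k \ge 2$ and $k < \ell < 2k$, an injective $\ell$-tuple already contains distinct injective $k$-subtuples that share some but not all of their entries, and nothing in $\Equiv^{0}_k$ (partial equivalence relation, reflexivity exactly on injective tuples, permutation invariance) decides whether such a pair is $E$-related: both choices give finite models, so by free amalgamation and genericity both occur inside the \Fraisse\ limit. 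There are no ``forced identifications'' to worry about; the problem is precisely that the structure on overlapping $k$-subsets is \emph{free}, not forced.

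Concretely, with $k = 2$, $\ell = 3$: in a model of $\Equiv_2$ there are triples of distinct elements $a_0,a_1,a_2$ and $b_0,b_1,b_2$ with $E(a_0,a_1,a_1,a_2)$ and $\neg E(b_0,b_1,b_1,b_2)$; by quantifier elimination these have different types, yet they have the same equational type. So as stated the lemma (and the one-line proof in the paper) is false for $k < \ell < 2k$; the argument you and the paper both have in mind genuinely proves the statement only for $\ell \le k$, where any injective $k$-subtuple of $\abar$ is a permutation of any other and hence $E$-reflexive by the axioms, so the $E$-diagram really is pinned down by the equality pattern. The downstream claim that $\Equiv_k$ is $2k$-ary but not $(2k-1)$-ary remains true, but it needs a direct argument exhibiting two $2k$-tuples whose $(2k-1)$-subtuples all agree while $E$ itself disagrees, rather than an appeal to this lemma.
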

\begin{proof}
  The axioms of $\Equiv^{0}_k$ imply that there can be no realizations of the relation $E$ in an $\ell$-tuple for $\ell < 2k$. The result now follows from quantifier elimination.
\end{proof}

\begin{prop}
  $\Equiv_k$ is $2k$-ary but not $(2k-1)$-ary.
\end{prop}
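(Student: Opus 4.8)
The plan is to treat the two claims separately.

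For $2k$-arity I would simply invoke quantifier elimination: every $\Lc^{\Equiv}_k$-formula is $\Equiv_k$-equivalent to a Boolean combination of atomic formulas, and an atomic $\Lc^{\Equiv}_k$-formula is either an equality $x=y$ or an instance $E(z_0,\dots,z_{2k-1})$ of the single relation symbol, hence has at most $2$, respectively at most $2k$, free variables. So every formula is $\Equiv_k$-equivalent to a Boolean combination of formulas in at most $2k$ free variables, i.e.\ $\Equiv_k$ is $2k$-ary.

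For the failure of $(2k-1)$-arity the key point is that if $\Equiv_k$ were $(2k-1)$-ary, then any two $2k$-tuples $\abar,\bbar$ with $\abar\res w\equiv\bbar\res w$ for every $(2k-1)$-element $w\subseteq\{0,\dots,2k-1\}$ would already satisfy $\abar\equiv\bbar$, since a Boolean combination of formulas in at most $2k-1$ variables cannot distinguish them. So it suffices to produce such $\abar=(a_0,\dots,a_{2k-1})$ and $\bbar=(b_0,\dots,b_{2k-1})$, consisting of distinct elements, with $\tp(\abar)\neq\tp(\bbar)$. Modulo the axioms of $\Equiv^0_k$ one may view $E$ as a coloring of the $k$-element subsets of a model, and I would realize inside the \Fraisse\ limit a copy of the finite ``rainbow'' model $A$ on $\{0,\dots,2k-1\}$ in which distinct $k$-subsets receive distinct colors, taking $\abar$ to enumerate its universe; and a copy of the finite model $B$ on $\{0,\dots,2k-1\}$ which is identical to $A$ except that the two complementary $k$-subsets $\{0,\dots,k-1\}$ and $\{k,\dots,2k-1\}$ share a color, taking $\bbar$ to enumerate its universe. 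One checks routinely that $A$ and $B$ satisfy $\Equiv^0_k$, so they embed.

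The two required properties then come from two observations. First, $\tp(\abar)\neq\tp(\bbar)$ because $E(b_0,\dots,b_{k-1},b_k,\dots,b_{2k-1})$ holds while $E(a_0,\dots,a_{k-1},a_k,\dots,a_{2k-1})$ fails. Second, for a $(2k-1)$-element $w$, quantifier elimination reduces $\abar\res w\equiv\bbar\res w$ to agreement on atomic formulas, and since $\{0,\dots,k-1\}$ and $\{k,\dots,2k-1\}$ together exhaust $\{0,\dots,2k-1\}$ they cannot both be contained in $w$; hence within $w$ neither $A$ nor $B$ has two distinct monochromatic $k$-subsets, so $\abar\res w$ and $\bbar\res w$ have the same quantifier-free type. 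I expect the main (quite minor) obstacle to be the bookkeeping in this last step — in particular handling instances of $E$ with repeated variables, which by the axioms of $\Equiv^0_k$ collapse to assertions about distinctness of their arguments and so are governed by the equational part of the type — together with stating carefully the standard reformulation of $(2k-1)$-arity used at the outset.
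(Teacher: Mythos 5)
Your proof is correct and takes essentially the same route as the paper's: quantifier elimination yields $2k$-arity, and the failure of $(2k-1)$-arity comes from the fact that a $(2k-1)$-tuple of distinct elements carries no nontrivial $E$-information. The paper packages this second observation as \cref{lem:T-k-small-trivial} (for $\ell<2k$, two $\ell$-tuples have the same type iff their equational reducts agree) and then cites it; you spell out the same point concretely with an explicit pair of $2k$-tuples and a direct check on all $(2k-1)$-element subsets.
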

\begin{proof}
  This follows immediately from quantifier elimination and \cref{lem:T-k-small-trivial}.
\end{proof}

Given the story we told to motivate the definition of $\Equiv_k$, one would expect that it should be `$k$-arily stable,' and while it does turn out to be $\NFOP_k$ (\cref{prop:Equiv-NFOP-k}), it also does not have bounded $k$-splitting.

\begin{prop}\label{prop:T-k-unbounded}
  $\Equiv_k$ has unbounded $(2k-2)$-splitting.
\end{prop}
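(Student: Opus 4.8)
The plan is to prove the stronger statement that for every infinite cardinal $\lambda$ there is a $(2k-2)$-partitioned type $p=\tp(a_0;\dots;a_{2k-3}/B)$, with each $a_i$ a single element, that $(2k-2)$-splits over every $C\subseteq B$ with $|C|<\lambda$; since $\lambda$ is arbitrary this yields unbounded $(2k-2)$-splitting. The formula witnessing splitting will always be
\[
  \varphi(x_0,\dots,x_{2k-3},y_1,y_2)\;:=\;E\bigl(x_0\cdots x_{k-2}\,y_1,\ x_{k-1}\cdots x_{2k-3}\,y_2\bigr).
\]
Note that $\varphi$ uses exactly one variable from each of the $2k-2$ blocks — a formula omitting a variable from some block $i$ is automatically useless, since then $b$ and $b'$ would be indistinguishable by it over $Ca_0\cdots\hat{a}_i\cdots a_{2k-3}$ — and it distributes these $2k-2$ variables between the two $k$-ary slots of $E$ so that deleting any single $a_i$ truncates exactly one slot to length $k-1$. (The more obvious choice $E(x_0\cdots x_{k-1},\ x_k\cdots x_{2k-3}\,y_1y_2)$ does not work: deleting $a_i$ for $i<k$ leaves the block $a_k\cdots a_{2k-3}$, hence the whole second $k$-tuple, intact, so the difference between $b$ and $b'$ stays visible over $\{a_k,\dots,a_{2k-3}\}$ and the $\equiv$-conditions fail.)

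For the data, take $B\models\Equiv_k$ containing pairwise distinct elements $(b_\xi,c_\xi,d_\xi)_{\xi<\lambda}$ forming a free configuration — every $k$-subset of $\{b_\xi,c_\xi,d_\xi:\xi<\lambda\}$ lies in its own $E$-class — which exists by genericity of $\Equiv_k$. Let $p$ be the type over $B$ asserting that the $a_i$ are distinct and outside $B$, that $E$ restricted to $B$ is unchanged, and that among the $k$-subsets of $\{a_0,\dots,a_{2k-3}\}\cup B$ meeting $\{a_0,\dots,a_{2k-3}\}$ the only non-singleton $E$-classes are $\{\{a_0,\dots,a_{k-2},b_\xi\},\{a_{k-1},\dots,a_{2k-3},c_\xi\}\}$, one for each $\xi<\lambda$. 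Consistency of $p$ over $B$ is immediate from quantifier elimination and free amalgamation: every finite fragment describes a consistent finite partial-equivalence-relation configuration, since the imposed identifications are pairwise ``disjoint'' and each involves some $a_i$, so transitivity never forces anything new.

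Given $C$ with $|C|<\lambda$, pick $\xi$ with $\{b_\xi,c_\xi,d_\xi\}\cap C=\varnothing$ (possible, the triples being disjoint), and set $b:=(b_\xi,c_\xi)$, $b':=(d_\xi,c_\xi)$. By construction $\varphi(a_0,\dots,a_{2k-3},b)$ holds (both $k$-tuples lie in the same $E$-class) and $\varphi(a_0,\dots,a_{2k-3},b')$ fails, so it remains to check $b\equiv_{Ca_0\cdots\hat{a}_i\cdots a_{2k-3}}b'$ for every $i<2k-2$. As $b$ and $b'$ agree in the coordinate $c_\xi$, quantifier elimination reduces this to showing that over $A:=C\cup\{a_j:j\neq i\}$ the bijection fixing $A\cup\{c_\xi\}$ and sending $b_\xi\mapsto d_\xi$ preserves $E$ on the $k$-subsets of $A\cup\{b_\xi,c_\xi\}$. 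Here the shape of $\varphi$ is exactly what is needed: the only non-singleton $E$-fact involving $b_\xi$ is $\{a_0,\dots,a_{k-2},b_\xi\}\sim\{a_{k-1},\dots,a_{2k-3},c_\xi\}$, and since $a_i\notin A$, at most one of these two $k$-tuples lies in $A\cup\{b_\xi,c_\xi\}$, the other lacking $a_i$; the one that is present is then alone in its $E$-class within that set, so $b_\xi$ behaves over $A$ exactly as generically as $d_\xi$. A short split on whether $i\leq k-2$ or $i\geq k-1$ completes the verification, so $p$ $(2k-2)$-splits over $C$.

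The main obstacle is choosing $\varphi$ and $p$ compatibly: one must split the $x$-variables between the two slots of $E$ so that removing any $a_i$ breaks a slot, and at the same time give each witness $\xi$ a fresh $E$-class in $p$, so that no lower-arity $E$-fact — such as one relating two witnesses through a shared block of $a$'s — encodes the difference between $b$ and $b'$ when $C$ happens to contain other witnesses. Once these are arranged, the $\equiv$-check is a routine quantifier-elimination computation.
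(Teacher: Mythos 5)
Your core approach matches the paper's: same $E$-configuration (the only nontrivial $E$-classes among $k$-subsets involving the $a_i$'s are $\{\{a_0,\dots,a_{k-2},b_\xi\},\{a_{k-1},\dots,a_{2k-3},c_\xi\}\}$), same witnessing formula $E(x_0\cdots x_{k-2}y_1,\ x_{k-1}\cdots x_{2k-3}y_2)$ with the same variable split across the two $E$-slots. Your variation of changing only the first coordinate of the parameter pair, $(b_\xi,c_\xi)$ vs.\ $(d_\xi,c_\xi)$, where the paper uses a fresh pair $(b'_i,c'_i)$, is a legitimate minor simplification; and quantifying over arbitrary small $C$ rather than prefixes is if anything slightly more careful than the paper's phrasing.

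However, there is a genuine slip in the choice of $B$. You take $B\models\Equiv_k$ (a full model) and impose a ``free configuration'' only on the witness set $\{b_\xi,c_\xi,d_\xi:\xi<\lambda\}$, meaning no two $k$-subsets drawn from that set are $E$-related. But in a model of $\Equiv_k$ this says nothing about $E$-relations between a $k$-subset containing $b_\xi$ and $k$-subsets involving \emph{other} elements of $B$ — and in the generic model there are plenty of these. Your key sentence, ``the only non-singleton $E$-fact involving $b_\xi$ is $\{a_0,\dots,a_{k-2},b_\xi\}\sim\{a_{k-1},\dots,a_{2k-3},c_\xi\}$,'' is therefore false as written: if $C$ (which has no reason to be contained in the witness set) contains elements $e_1,\dots,e_{k-1}$ and some $k$-tuple $T\subseteq C\cup\{c_\xi\}$ with $E(b_\xi e_1\cdots e_{k-1},\,T)$ holding but $E(d_\xi e_1\cdots e_{k-1},\,T)$ failing, the $\equiv$-check breaks. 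The fix is precisely what the paper does: do not take $B$ to be a model. Let $B$ be the $\Lc^{\Equiv}_k$-substructure consisting \emph{only} of the witness elements, with every $k$-subset of $B$ in a singleton $E$-class, embedded into an ambient model; then every $E$-relation touching $b_\xi$ or $d_\xi$ passes through all of $a_0,\dots,a_{2k-3}$, so removing $a_i$ kills it, and your verification goes through.
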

\begin{proof}
  Fix an ordinal $\alpha$. Consider the $\Lc^{\Equiv}_k$-structure consisting of elements $(b_i)_{i<\alpha}$, $(c_i)_{i < \alpha}$, $(b'_i)_{i < \alpha}$, $(c'_i)_{i< \alpha}$, and $(a_i)_{i < 2k-2}$ satisfying that the only instances of the relation $E$ are those generated by $E(a_0\dots a_{k-2}b_i,\allowbreak a_{k-1}\dots a_{2k-3}c_i)$ for $i< \alpha$ and the axioms of $\Equiv_k$. (It is straightforward to check that such a structure exists.) Consider it as a set of parameters in a model of $\Equiv_k$. We need to argue that for every $\beta < \alpha$, the partitioned $(2k-2)$-type $\tp(a_0;\dots;a_{2k-3}/b_{<\alpha}c_{<\alpha}b'_{<\alpha}c'_{<\alpha})$ $(2k-2)$-splits over $b_{<\beta}c_{<\beta}b'_{<\beta}c'_{<\beta}$. It is immediate from quantifier elimination that for each $i < 2k-2$, \(b_\beta c_\beta \equiv_{b_{<\beta}c_{<\beta}b'_{<\alpha}c'_{<\beta}a_0\dots \hat{c}_i \dots a_{2k-3}} b'_\beta c'_\beta.\) Therefore since $E(a_0\dots a_{k-2}b_\beta,a_{k-1}\dots a_{2k-3}c_\beta)$ and $\neg E(a_0\dots a_{k-2}b'_\beta,a_{k-1}\dots a_{2k-3}c'_\beta)$ hold, we have an instance of $(2k-2)$-splitting.

  Since we can do this for any ordinal $\alpha$, $\Equiv_k$ has unbounded $(2k-2)$-splitting.
\end{proof}

\begin{prop}
  $\Equiv_k$ has $\aleph_0$-bounded $(2k-1)$-splitting.
\end{prop}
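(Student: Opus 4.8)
The plan is to lean on the fact (stated above) that $\Equiv_k$ has quantifier elimination, reducing to atomic formulas, and then to exploit that $E$ is a \emph{partial} equivalence relation. Fix a finite $n$ and a $(2k-1)$-partitioned type $\tp(\abar_0;\dots;\abar_{2k-2}/B)$ with each $\abar_i$ an $n$-tuple, and let $A$ be the finite set of all coordinates appearing in $\abar_0,\dots,\abar_{2k-2}$. (The permutation axiom of $\Equiv^0_k$ makes the truth of $E$ on a pair of $k$-element subsets well-defined, so I will freely speak of $S$ being ``$E$-related to'' $R$ for $k$-element sets $S,R$.) I would take $C\subseteq B$ to be a finite set chosen so that for every $(k-1)$-element $S\subseteq A$ and every $k$-element $R\subseteq A$, if some $c\in B\setminus S$ makes the $k$-element set $S\cup\{c\}$ $E$-related to $R$, then $C$ contains at least one such $c$. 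Since there are only finitely many pairs $(S,R)$, this $C$ is finite, and the claim is that $\tp(\abar_0;\dots;\abar_{2k-2}/B)$ does not $(2k-1)$-split over it.

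Assuming it did, with witnessing formula $\varphi(x_0,\dots,x_{2k-2},y)$ and $b,b'\in B$, quantifier elimination lets me replace $\varphi$ by an atomic formula $\psi$ (with parameters from $C$) that still takes different truth values at $(\abar_0,\dots,\abar_{2k-2},b)$ and $(\abar_0,\dots,\abar_{2k-2},b')$; since the non-$(2k-1)$-splitting hypothesis on $b,b'$ is symmetric, I may assume $\psi$ holds at $b$ and fails at $b'$. As the language has only $E$ and equality, $\psi$ is an equality or an instance of the $2k$-ary relation $E$. A counting argument disposes of both unless every one of $x_0,\dots,x_{2k-2}$ contributes a coordinate to $\psi$: if some $x_{i'}$ contributes nothing, then $\psi(\abar_0,\dots,\abar_{2k-2},-)$ is a formula over $C\abar_0\dots\hat\abar_{i'}\dots\abar_{2k-2}$, so by $b\equiv_{C\abar_0\dots\hat\abar_{i'}\dots\abar_{2k-2}}b'$ it has the same truth value at $b$ and $b'$, a contradiction; equalities mention at most two coordinates, so (as $2k-1\geq 3$) they always have such an unused $x_{i'}$. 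Hence $\psi$ is an $E$-instance using exactly one coordinate of each $\abar_i$ and — since at least one argument slot must be a coordinate of $y$, else $\psi$ does not depend on $y$ — exactly one coordinate of $b$, and no parameter from $C$.

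Now I can use the permutation and symmetry axioms of $\Equiv^0_k$ to rewrite $\psi$ as the assertion ``$E(\ubar,S\cup\{\beta\})$'', where $\ubar\subseteq A$ has $k$ elements, $S\subseteq A$ has $k-1$ elements, the indices of $\ubar$ and of $S$ are disjoint and together exhaust $\{0,\dots,2k-2\}$, and $\beta$ is the coordinate of $b$ occurring in $\psi$ (if the displayed $2k$ elements fail to be pairwise distinct, $\psi$ fails at $b$, contrary to assumption). Since $\psi$ holds at $b$, the set $S\cup\{\beta\}$ is a genuine $k$-element set, so $\beta\notin S$, and $\beta\in B$; hence $(S,\ubar)$ is a pair of the kind used to define $C$, so $C$ contains some $c^*\notin S$ with $S\cup\{c^*\}$ $E$-related to $\ubar$. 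By transitivity and symmetry of $E$, from $E(S\cup\{c^*\},\ubar)$ and $E(\ubar,S\cup\{\beta\})$ I get $E(S\cup\{c^*\},S\cup\{\beta\})$, while $E(S\cup\{c^*\},S\cup\{\beta'\})$ must fail (where $\beta'$ is the corresponding coordinate of $b'$), since otherwise transitivity would give $E(\ubar,S\cup\{\beta'\})$, i.e. $\psi$ at $b'$. Thus the formula $\theta(y):=E(S\cup\{c^*\},S\cup\{y_*\})$, where $y_*$ is the variable of $y$ corresponding to $\beta$, holds at $b$ and fails at $b'$; but its parameters lie in $C\cup S$, and since $S$ uses only indices outside some index $i'$ of $\ubar$, $\theta$ is a formula over $C\abar_0\dots\hat\abar_{i'}\dots\abar_{2k-2}$, contradicting $b\equiv_{C\abar_0\dots\hat\abar_{i'}\dots\abar_{2k-2}}b'$.

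The step I expect to carry the real content is the interplay between the choice of $C$ and this last transitivity manoeuvre: the offending formula $E(\ubar,S\cup\{\beta\})$ necessarily mentions a coordinate of \emph{every} $\abar_i$, so it is not ``visible'' over any $C\abar_0\dots\hat\abar_{i'}\dots\abar_{2k-2}$ on its nose, and the trick is that transitivity of $E$ lets one replace $\ubar$ by a representative $S\cup\{c^*\}$ of its $E$-class built from $C$ together with coordinates of the $\abar_i$ with $i\neq i'$ — while $C$ is engineered so that such a representative exists precisely when the bad configuration occurs, since the coordinate $\beta\in B$ already exhibits one. The remaining ingredients (quantifier elimination, the counting step pinning down which $E$-instances are dangerous, and the handling of equalities and of non-distinct tuples) should be routine.
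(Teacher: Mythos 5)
Your proof is correct and takes essentially the same approach as the paper: choose a finite $C$ containing, for each relevant pair of $k$-element subsets built from coordinates of the $\abar_i$'s (plus one slot in $B$), a representative from $B$ of the corresponding $E$-class; then reduce via quantifier elimination to atomic formulas, use a slot-counting argument to show the only dangerous atoms are $E$-instances hitting every $\abar_i$ once and $b$ once with no $C$-parameters, and use symmetry/transitivity of $E$ to swap $\ubar$ for the stored representative $S\cup\{c^*\}$, producing a distinguishing formula over $C$ together with only $2k-2$ of the $\abar_i$'s. The paper's write-up is a direct verification that $\bbar\equiv_{C\abar_0\dots\abar_{2k-2}}\dbar$ rather than a contradiction, and states the choice of $C$ less carefully (it restricts the representative $\cbar_{\dbar}$ to live in $B\abar_0\dots\abar_{k-1}$, which appears to be a small imprecision that your more explicit parameterization by pairs $(S,R)$ avoids), but the substance, including the counting step and the transitivity manoeuvre, is the same.
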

\begin{proof}
  Fix a set of parameters $B$ in a model of $\Equiv_k$ and a $(2k-1)$-partitioned type $\tp(\abar_0;\dots;\abar_{2k-2}/B)$. For each $k$-tuple $\dbar = d_0\dots d_{k-1}$ of elements from the tuple $\abar_0\abar_1\dots \abar_{2k-2}$, if there is a $k$-tuple $\cbar \in B\abar_0\dots \abar_{k-1}$ not entirely contained in $\abar_0\dots\abar_{k-1}$ such that $E(\dbar,\cbar)$ holds, let $\cbar_{\dbar}$ be some such $k$-tuple. Let $C$ be the set of elements of $B$ occurring in the tuples $\cbar_{\dbar}$. Clearly $|C| < \aleph_0$. We need to show that $\tp(\abar_0;\dots;\abar_{2k-2}/B)$ does not $(2k-1)$-split over $C$.

  Fix $\bbar =b_0\dots b_{\ell-1}$ and $\dbar = d_0\dots d_{\ell-1}$ in $B$ satisfying that $\bbar \equiv_{C\abar_0\dots \hat{\abar}_i \dots \abar_{2k-2}} \dbar$ for each $i < 2k-1$. By quantifier elimination, all we need to do is check that whenever there is an instance of the $E$ relation involving elements of $\bbar$, then the same relation holds with the corresponding elements of $\dbar$. Any instance of the $E$ relation that does not involve an element from each of the tuples $\abar_i$ for $i < 2k-1$ must be the same for $\bbar$ and $\dbar$ by our assumption, so the only cases we need to check are those that involve elements from each $\abar_i$.

  Assume that $E(\tbar,\sbar)$ holds, where every element of $\tbar$ and $\sbar$ is either in $C$, $\abar_i$ for some $i < 2k-1$, or $\bbar$ and some element is from $\bbar$. Since we need an element from each of the $\abar_i$'s to occur in either $\tbar$ or $\sbar$, there can be only one element of $\tbar\sbar$ that is not from $\abar_i$ for some $i < 2k-1$. Therefore no elements of $C$ can be involved, and, by symmetry, we may assume that the relation is of the form $E(e_0\dots e_{k-1},e_k\dots e_{2k-3} b_m$ for some $m < \ell$, where for each $j < 2k-2$, $e_j \in \abar_i$ for some $i < 2k-1$. By construction, we must have placed some $c \in C$ satisfying that $E(e_0\dots e_{k-1},e_k \dots e_{2k-3} c)$, so now we have that $E(e_k\dots e_{2k-3} c , e_k\dots e_{2k-3} b_m)$. By our type condition, we also have that $E(e_k\dots e_{2k-3} c, e_k\dots e_{2k-3} d_m)$, whereby $E(e_0\dots e_{k-1},e_k\dots e_{2k-3}d_m)$.

  We have established that whenever an instance of the $E$ relation occurs involving elements of $\bbar$, the same relation occurs involving the corresponding elements of $\dbar$. By symmetry the reverse is also true, so by quantifier elimination we have that $\bbar \equiv_{C \abar_0\dots \abar_{2k-2}} \dbar$, as required.
\end{proof}

Finally, it is natural to wonder about the status of $\FOP_k$ and $\IP_k$ in $\Equiv_k$. Gabriel Conant has provided us with the following adaptation to the case of stable formulas of \cite[Lem.~3.26]{FOP} and its proof. 

\begin{lem}\label{lem:Gabe}
  Let $T$ be a complete theory. Suppose $\varphi(\xbar;\ybar)$ is stable in $T$ (as a partitioned formula), with $|\xbar| = |\ybar| = k$. Let $z_0,\dots,z_{k}$ be a partition of the $2k$ free variables in $\varphi(\xbar;\ybar)$. Then $\theta(z_0;\dots;z_k)\coloneq \varphi(\xbar,\ybar)$ is $\NFOP_k$ in $T$.
\end{lem}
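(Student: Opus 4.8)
The plan is to run the proof of \cite[Lem.~3.26]{FOP} with ``stable'' in place of ``$\NIP$'': there the hypothesis on $\varphi$ is used only to control, on mutually indiscernible systems, the behaviour of $\varphi$ along a single coordinate section, and stability gives stronger control (bounded alternation, hence an eventual/average value rather than merely bounded VC behaviour), which is exactly what should upgrade the conclusion from $\NIP_k$ to $\NFOP_k$. So I would argue by contradiction. Suppose $\theta(z_0;\dots;z_{k-1};z_k)$ has $\FOP_k$ and fix a witnessing configuration: mutually indiscernible sequences $(\bar c^s_i)_{i<\omega}$ feeding the object blocks $z_0,\dots,z_{k-1}$, together with the parameter data feeding $z_k$, realizing the nondegenerate ``functional'' pattern. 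For each block write $z_s=\xbar^{(s)}\ybar^{(s)}$ with $\xbar^{(s)}=z_s\cap\xbar$ and $\ybar^{(s)}=z_s\cap\ybar$, split each $\bar c^s_i$ and the parameter into an ``$\xbar$-part'' and a ``$\ybar$-part'' accordingly, and rewrite every instance of $\theta$ in the configuration as an instance $\varphi(\text{assembled }\xbar;\text{assembled }\ybar)$.

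The counting step is that $|\xbar|=k$ is strictly less than the number $k+1$ of blocks. So either some object block $z_{s_0}$ (with $s_0<k$) is disjoint from $\xbar$, or else every object block meets $\xbar$, which forces each of them to carry exactly one $\xbar$-variable and the parameter block $z_k$ to carry none; in the latter case, applying the same dichotomy to $\ybar$ (and using that $\varphi$ is stable iff the flipped partition $\varphi(\ybar;\xbar)$ is stable), either some object block is disjoint from $\ybar$ — symmetric to the first case — or $z_k$ is disjoint from $\ybar$ as well, whence $z_k=\varnothing$, $\theta$ has no parameter variable, and so it trivially cannot have $\FOP_k$. Thus we may assume there is an object block $z_{s_0}$ disjoint from $\xbar$ (the $\ybar$-case being identical after the flip).

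Now freeze all indices except $i_{s_0}$. Because $z_{s_0}$ is disjoint from $\xbar$, the assembled $\xbar$-tuple is then a fixed tuple $\bar e$, while the assembled $\ybar$-tuple has the form $\bar d'\concat\bar d^{s_0}_{i_{s_0}}\concat\bar d''$ with $\bar d',\bar d''$ fixed; here $(\bar d^{s_0}_i)_i$, being a reduct of the indiscernible sequence $(\bar c^{s_0}_i)_i$, is an indiscernible sequence over $\varnothing$. The instance $\varphi(\xbar;\bar d'\,\cdot\,\bar d'')$ (freezing part of $\ybar$) is again a stable partitioned formula, so along the $\varnothing$-indiscernible sequence $(\bar d^{s_0}_i)_i$ the truth value of $\varphi(\bar e;\bar d'\bar d^{s_0}_i\bar d'')$ — equivalently of $\theta(\bar c^0_{i_0},\dots,\bar c^{k-1}_{i_{k-1}},\bar b)$ along this section — has finite alternation and hence settles to a definite value, which moreover, by the uniformity built into the $\FOP_k$-configuration, is forced to be the same across the relevant range of the frozen indices. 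This degeneracy of the pattern in the $s_0$-th object coordinate contradicts the nondegeneracy demanded by $\FOP_k$.

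The main obstacle is this last step: one has to match the precise shape of the $\FOP_k$-pattern — what exactly ``nondegenerate in each object coordinate'' means and over which ranges of indices the pattern is asserted — against the precise output of stability on the section (bounded alternation / eventual value / the $\varphi$-average type), and verify these are genuinely incompatible. This matching is the whole content of \cite[Lem.~3.26]{FOP}'s argument, and the only change here is that the stronger hypothesis (stable rather than $\NIP$) makes the incompatibility stronger and so yields the stronger conclusion $\NFOP_k$ in place of $\NIP_k$.
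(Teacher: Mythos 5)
Your route genuinely differs from the paper's, and the step you leave unfilled is a real gap rather than a routine verification. The paper, following \cite[Lem.~3.26]{FOP} closely, first applies a pigeonhole to obtain blocks $z_s\subseteq\xbar$ and $z_t\subseteq\ybar$ (with $s<t$ after possibly swapping $\xbar\leftrightarrow\ybar$), and then leans on \cite[Prop.~2.3]{FOP}, which lets one realize the $\FOP_k$-pattern $\theta(a^0_{i_0};\dots;a^k_{i_k})\leftrightarrow(i_0,\dots,i_{k-1})<_\ast i_k$ for an \emph{arbitrarily chosen} linear order $<_\ast$ on $\omega^k\cup\omega$. Choosing $<_\ast$ so that $\tau_i<_\ast j\leftrightarrow i<j$ when $t=k$, or so that $\tau_{i,j}<_\ast 0\leftrightarrow i<j$ when $t<k$, and assembling tuples $\gbar_i$ and $\hbarr_j$ that vary only in the $z_s$- and the $z_t$- (or $z_k$-) coordinates respectively, it reads off $\varphi(\gbar_i;\hbarr_j)\leftrightarrow i<j$, an explicit order-property witness for $\varphi$. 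It does not pass through ``bounded alternation on indiscernible sequences'' at all.

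The gap in your sketch is that you never invoke this freedom to prescribe the realized pattern. Your pigeonhole (some object block $z_{s_0}\subseteq\ybar$, up to the $\xbar\leftrightarrow\ybar$ symmetry, or else $z_k=\varnothing$) is fine, but freezing all but $i_{s_0}$ and noting that stability bounds the alternation of $\varphi(\bar e;\bar d'\bar d^{s_0}_i\bar d'')$ along the indiscernible section does not, on its own, contradict anything: nothing forces the realized $\FOP_k$-pattern to oscillate on that slice, and for a particular realization the section may well be eventually constant. To close the argument you would have to cite \cite[Prop.~2.3]{FOP} and pick $<_\ast$ so that $(0,\dots,i_{s_0},\dots,0)<_\ast 0$ alternates infinitely in $i_{s_0}$; only then does bounded alternation on the indiscernible sequence $(\bar d^{s_0}_i)_i$ bite. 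The phrase ``by the uniformity built into the $\FOP_k$-configuration'' neither supplies this choice nor substitutes for it. You correctly flag this last step as the main obstacle; it is, and it is the entire content of the lemma.
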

\begin{proof}
  We will prove the contrapositive. Assume that $\theta(z_0;\dots;z_k)$ has $\FOP_k$ in $T$. We will show that the formula $\varphi(\xbar;\ybar)$ is unstable in $T$.

  Just as in the proof of \cite[Lem.~3.26]{FOP}, there must be $s,t < k+1$ such that $z_s \subseteq \xbar$ and $z_t \subseteq \ybar$. By switching $\xbar$ and $\ybar$ if necessary, we may assume without loss of generality that $s < t$. There are now two cases.

  \vspace{0.5em}

  \noindent \emph{Case 1.} $t = k$.

  For $i < \omega$, let $\tau_i = 0^{s-1}\concat i \concat 0^{k-s} \in \omega^k$. Choose a linear order $<_\ast$ on $\omega^k \cup \omega$ such that for $j \in \omega$, $\tau_i <_\ast j$ if and only if $i < j$. By \cite[Prop.~2.3]{FOP}, there is an array $(a_i^j)_{i<\omega,j\leq k}$ such that $\theta(a^0_{i_0};\dots ; a_{i_k}^k)$ holds if and only if $(i_0,\dots,i_{k-1}) <_\ast i_k$. For $r \in (k-1)\setminus \{s\}$, let $c_r$ be the subtuple of $a_0^r$ corresponding to the overlap between $z_r$ and $\xbar$, and let $d_r$ be the subtuple of $a_0^r$ corresponding to the overlap between $z_r$ and $\ybar$. For $i < \omega$, let $\gbar_i$ be the enumeration of $(c_0,\dots,c_{s-1},a_i^{s},c_{s+1},\dots,c_{k-1},\varnothing)$ in the order corresponding to $\xbar$, and let $\hbarr_i$ be the enumeration of $(d_0,\dots,d_{s-1},\varnothing,d_{s+1},\dots,d_{k-1},a^k_j)$ in the order corresponding to $\ybar$. By construction, $\varphi(\gbar_i,\hbarr_j)$ is the same thing as $\theta(a_0^0;\dots;a_0^{s-1};a_i^s;a_0^{s+1};\dots;a_0^{k-1},a_0^k)$. Therefore we have that $\varphi(\gbar_i,\hbarr_j)$ holds if and only if $\tau_i <_\ast j$ and so if and only if $i < j$, whereby $\varphi(\xbar;\ybar)$ is unstable in $T$.

  \vspace{0.5em}

  \noindent \emph{Case 2.} $t < k$.

  For $i,j < \omega$, let $\tau_{i,j} = 0^{s-1}\concat i \concat 0^{t-s-1}\concat j \concat 0^{k-1} \in \omega^k$. Choose a linear order $<_\ast$ on $\omega^k \cup \omega$ such that for $i,j < \omega$, $\tau_{i,j} <_\ast 0$ if and only if $i<j$. By \cite[Prop.~2.3]{FOP}, there is an array $(a_i^j)_{i<\omega,j\leq k}$ such that $\theta(a^0_{i_0};\dots ; a_{i_k}^k)$ holds if and only if $(i_0,\dots,i_{k-1}) <_\ast i_k$. For $r \in (k+1)\setminus \{s,t\}$, let $c_r$ be the subtuple of $a_0^r$ corresponding to the overlap between $z_r$ and $\xbar$, and let $d_r$ be the subtuple of $a_0^r$ corresponding to the overlap between $z_r$ and $\ybar$. For $i < \omega$, let $\gbar_i$ be the enumeration of $(c_0,\dots,c_{s-1},a_i^s,c_{s+1},\dots,c_{t-1},\varnothing,c_{t+1},\dots,c_{k})$ in the order corresponding to $\xbar$, and let $\hbarr_i$ be the enumeration of $(d_0,\dots,d_{s-1},\varnothing,d_{s+1},\dots,d_{t-1},a_i^t,d_{t+1},\dots,d_k)$ in the order corresponding to $\ybar$. By construction $\varphi(\gbar_i,\hbarr_j)$ is the same thing as $\theta(a_0^0;\dots;a_0^{s-1};a_i^{s};a_0^{s+1},\dots,a_0^{t-1},a_j^t;a_0^{t+1},\dots,a_0^k)$. Therefore we have that $\varphi(\gbar_i,\hbarr_j)$ holds if and only if $\tau_{i,j} <_\ast 0$ and so if and only if $i < j$, whereby $\varphi(\xbar;\ybar)$ is unstable in $T$.
\end{proof}

\begin{prop}\label{prop:Equiv-NFOP-k}
  $\Equiv_k$ is $\NFOP_k$ and has IP$_{k-1}$.
\end{prop}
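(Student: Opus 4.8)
The plan is to prove the two parts by fairly different methods. For $\NFOP_k$ the strategy is to use quantifier elimination to reduce to atomic formulas and then invoke \cref{lem:Gabe}; for $\IP_{k-1}$ the strategy is to write down an explicit $(k-1)$-independence configuration and realize it inside a model of $\Equiv^{0}_k$. I expect essentially all of the (modest) work to be in the bookkeeping for the former.

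For $\NFOP_k$: since $\Equiv_k$ has quantifier elimination, every formula is a Boolean combination of atomic formulas, and since the class of $\NFOP_k$ formulas for a fixed partition of the free variables into $k+1$ blocks is closed under Boolean combinations and under adding or deleting dummy variables in a block (\cite{FOP}), it suffices to check that every atomic formula, under every partition of its variables into $k+1$ blocks, is $\NFOP_k$. An equality uses at most two of the blocks, hence---as $k\geq 2$---fewer than $k+1$ of them, so it is (vacuously) $\NFOP_k$; the same applies to any instance of $E$ touching at most $k$ blocks, and an instance of $E$ with a variable repeated inside one of its two $k$-argument slots is inconsistent modulo $\Equiv^{0}_k$ (since $E(\abar,\bbar)$ forces $\abar$ and $\bbar$ to have pairwise distinct entries), hence also $\NFOP_k$. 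The one substantive case is an instance $E(\xbar;\ybar)$ with $|\xbar|=|\ybar|=k$, each of $\xbar,\ybar$ a tuple of distinct variables (the two tuples possibly sharing variables). Here I would first check that the partitioned formula $E(\xbar;\ybar)$ is stable in $\Equiv_k$: if $(\abar_i,\bbar_i)_{i<\omega}$ were an order pattern with $E(\abar_i;\bbar_j)$ iff $i<j$, then from $E(\abar_0;\bbar_1)$ and $E(\abar_0;\bbar_2)$ one gets $E(\bbar_1;\bbar_2)$ by symmetry and transitivity of $E$ on $k$-tuples with distinct entries, and then $E(\abar_1;\bbar_2)$ forces $E(\abar_1;\bbar_1)$, contradicting $1\not<1$. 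Given stability, \cref{lem:Gabe} applies directly when $\xbar$ and $\ybar$ are variable-disjoint; when they share $j<k$ variables I would instead apply \cref{lem:Gabe} to the (still stable) formula $E(\xbar;\ybar)\wedge\bigwedge x_p=y_q$ in a full set of $2k$ distinct variables, with the $k+1$ blocks chosen so that each shared pair $\{x_p,y_q\}$ lies in one block, and then note that identifying the two variables of such a pair preserves $\NFOP_k$. Verifying that the degenerate instances are all caught in this way---either by inconsistency or by this conjoin-with-equalities device---is the only part I expect to require care.

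For $\IP_{k-1}$: I would exhibit the partitioned formula
\[
  \varphi(x_0;\dots;x_{k-2};y_0,y_1) \coloneq E(x_0,\dots,x_{k-2},y_0;\ x_0,\dots,x_{k-2},y_1),
\]
which asks whether the two $k$-element sets $\{x_0,\dots,x_{k-2},y_0\}$ and $\{x_0,\dots,x_{k-2},y_1\}$ lie in the same $E$-class. To witness $\IP_{k-1}$, take pairwise distinct elements $a^j_i$ ($i<\omega$, $j<k-1$) and, for each $S\subseteq\omega^{k-1}$, a pair of fresh distinct elements $b^S_0,b^S_1$, and build the $\Lc^{\Equiv}_k$-structure whose only nontrivial $E$-classes are, for each $S$ and each $(i_0,\dots,i_{k-2})\in S$, the two-element class $\{\,\{a^0_{i_0},\dots,a^{k-2}_{i_{k-2}},b^S_0\},\ \{a^0_{i_0},\dots,a^{k-2}_{i_{k-2}},b^S_1\}\,\}$. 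A routine check shows these declared edges are pairwise vertex-disjoint, so the relation (completed by reflexivity on distinct-entry $k$-tuples) is a partial equivalence relation and the structure models $\Equiv^{0}_k$; hence it embeds into $\Equiv_k$, and by quantifier elimination $\varphi(a^0_{i_0},\dots,a^{k-2}_{i_{k-2}};b^S_0,b^S_1)$ holds in $\Equiv_k$ exactly when $(i_0,\dots,i_{k-2})\in S$. Compactness then promotes this to a full $\IP_{k-1}$ configuration over $\omega^{k-1}$, so $\varphi$ witnesses $\IP_{k-1}$ in $\Equiv_k$.
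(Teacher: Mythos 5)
Your proof is correct and follows essentially the same route as the paper: quantifier elimination together with \cref{lem:Gabe} and \cite[Thm.~1.5]{FOP} for $\NFOP_k$, and an explicit configuration realized in a finite model of $\Equiv^{0}_k$ via free amalgamation for $\IP_{k-1}$. The only differences are cosmetic---the paper's $\IP_{k-1}$ witness uses a fixed $k$-tuple $\abar$ of parameters and the formula $E(\abar; x_0,\dots,x_{k-2},y)$ with a single varying element, whereas you use the parameter-free $E(x_0,\dots,x_{k-2},y_0;\,x_0,\dots,x_{k-2},y_1)$ with a pair in the last block; and the paper elides the case analysis of $E$-instances with repeated or shared variables that you spell out (though your phrase ``pairwise vertex-disjoint'' for the declared $E$-pairs is imprecise; what you actually need, and what holds, is that each $k$-element set occurs in at most one declared pair, so the transitive closure creates no new identifications).
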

\begin{proof}
  It is immediate from the fact that $E$ is an equivalence relation on $k$-tuples that the partitioned formula $E(\xbar;\ybar)$ (with $|\xbar|=|\ybar| = k$) is stable in $\Equiv_k$. Therefore, by quantifier elimination, \cref{lem:Gabe}, and \cite[Thm.~1.5]{FOP}, $\Equiv_k$ is $\NFOP_k$.

 To see that $\Equiv_k$ has IP$_{k-1}$ fix a $k$-tuple $\abar$ and an array $(b_{i}^j)_{i<\omega,j<k-1}$ of distinct elements. For any $X \subseteq \omega^{k-1}$, we can find a $c$ such that for any $(i_0,\dots,i_{k-2}) \in \omega^{k-1}$, $E(\abar;b_{i_0}^0\dots b_{i_{k-2}}^{k-2}c)$ if and only if $(i_0,\dots,i_{k-2}) \in X$ (since any such pattern occurs in a model of $\Equiv_k^0$ and the class of models of $\Equiv_k^0$ has free amalgamation).
\end{proof}

So we've established that for every $k > 1$, the theory $\Equiv_k$ has IP$_{k-1}$, is $\NFOP_k$, has unbounded $(2k-2)$-splitting and bounded $(2k-1)$-splitting, and is $2k$-ary but not $(2k-1)$-ary, giving a simultaneous separation of $\NFOP_k$, bounded $k$-splitting, and $k$-arity.

Since $\Equiv_k$ is still $2k$-ary, we know that \cref{thm:bounded-k-splitting-indisc-extraction} is not entirely optimal for it, at least in some sense. \cref{thm:bounded-k-splitting-indisc-extraction} establishes that if $\mu$ is $k$-ineffable, then $\mu \to (\mu)_{\Equiv_k,n}$ for any $n < \omega$. Often in applications, merely finding an infinite indiscernible sequence (or an indiscernible sequence of length $\alpha$ for some relatively small countable ordinal $\alpha$) rather than an indiscernible sequence of some large cardinality is what really matters. Since $\Equiv_k$ is $2k$-ary, \cref{fact:Erdos-Rado-extraction} applies and implies that $\beth_{2k}^+ \to ((2^{\aleph_0})^+)_{\Equiv_k,n}$ for any $n < \omega$.

\section{Relationship to other higher-arity notions}
\label{sec:relationship-to-other}

In order to compare bounded $k$-splitting to $\NFOP_k$, we'll first recall some notation and facts from \cite[Sec.~3.2]{FOP}. $\Lc_k = \{P_0,\dots,P_{k},<,<_k,R\}$ is a language with $P_0,\dots,P_{k}$ unary relations, $<$ a binary relation, $<_k$ a $2k$-ary relation, and $R$ a $(k+1)$-ary relation. This language is used in \cite{FOP} for the indexing structure of a kind of generalized indiscernible that characterizes $\FOP_k$. $T_k$ is the universal $\Lc_k$-theory that says
\begin{itemize}
\item $P_0,\dots,P_{k}$ is a partition, 
\item $<$ is a linear order satisfying $P_0 < \dots < P_{k}$,
\item if $R(x_0,\dots,x_{k})$ holds, then $P_i(x_i)$ holds for each $i \leq k$,
\item $<_k$ only holds on $(P_0\times \dots \times P_{k-1})^2$ and is a linear order (i.e., a linear order on $k$-tuples in $P_0\times \dots \times P_{k-1}$), and
\item if $\xbar \leq_k \ybar$, $R(\ybar,w)$, and $w \leq z$, then $R(\xbar,z)$.\footnote{Note that the assumptions of this imply that $w$ and $z$ are in $P_k$.}
\end{itemize}
Note that $T_k$ imposes no restriction on the relationship between the relation $<$ on $P_0,\dots,P_{k-1}$ and the relations $<_k$ and $R$.

In \cite[Cor.~3.15]{FOP}, it is shown that the finite models of $T_k$ form a \Fraisse\ class. We let $\Hc_k$ denote its \Fraisse\ limit. Recall that given a structure $M$, a map $f : \Hc_k \to M^{<\omega}$ (where $M^{<\omega}$ is the set of finite tuples of elements of $M$) is an \emph{$\Hc_k$-indexed indiscernible sequence} if for any finite tuple $a_0,\dots,a_{k-1} \in \Hc_k$, the sorts of $f(a_0),\dots,f(a_{k-1})$ and $\tp(f(a_0),\dots, f(a_{k-1}))$ only depend on the quantifier-free type of $a_0\dots a_{k-1}$ in $\Hc_k$.

\begin{fact}[{\cite[Prop.~3.20]{FOP}}]\label{fact:FOP-k-indisc-witness}
  $T$ has $\FOP_k$ if and only if there is a model $M \models T$, a formula $\varphi(x_0,\dots,x_{k})$, and an $\Hc_k$-indexed indiscernible sequence $f : \Hc_k \to M^{<\omega}$ such that for any $(h_0,\dots,h_{k}) \in P_0(\Hc_k)\times \dots \times P_{k}(\Hc_k)$, we have that $f(h_i)$ is of the same sort as $x_i$ for each $i \leq k$ and $\varphi(f(h_0),\dots,f(h_k))$ holds if and only if $\Hc \models R(h_0,\dots,h_k)$.
\end{fact}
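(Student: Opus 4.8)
The statement is \cite[Prop.~3.20]{FOP}, and the natural proof follows the standard template for characterizing a combinatorial property by a generalized-indiscernible witness indexed by a \Fraisse\ limit. The two ingredients are: the array characterization of $\FOP_k$ (in particular the fact, \cite[Prop.~2.3]{FOP}, that one may prescribe the linear order $<_\ast$ appearing in that characterization), and the \emph{modeling property} for $\Hc_k$-indexed indiscernibles, which holds because the age of $\Hc_k$ --- the class of finite models of $T_k$ (\cite[Cor.~3.15]{FOP}) --- is a Ramsey class, as established in \cite{FOP}.

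For the easy direction, suppose $f \colon \Hc_k \to M^{<\omega}$ is as in the statement. Inside $\Hc_k$ one chooses, for $i < k$ and $j < \omega$, elements $h^i_j \in P_i(\Hc_k)$ and, for $j < \omega$, elements $g_j \in P_k(\Hc_k)$ so that the $\Lc_k$-structure induced on $\{h^i_j : i<k,\, j<\omega\} \cup \{g_j : j < \omega\}$ realizes the combinatorial pattern required by the array characterization of $\FOP_k$; this is possible since $T_k$ imposes no relation between $<$, $<_k$, and $R$ beyond the monotonicity axiom, and $\Hc_k$ is universal for finite models of $T_k$ (compactness lets one pass from the finite sub-patterns to an $\omega$-indexed configuration). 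Since the quantifier-free type of a $(k+1)$-tuple from $\Hc_k$ determines both the sorts and the $\varphi$-value of the corresponding tuple of $f$-images, the images $f(h^i_j)$, $f(g_j)$ realize that same pattern with $\varphi$ replacing $R$, so $\varphi$ has $\FOP_k$ in $T$.

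For the other direction, suppose $\varphi(x_0,\dots,x_k)$ has $\FOP_k$ in $T$. Using \cite[Prop.~2.3]{FOP} to realize an arbitrary order $<_\ast$, one first arranges, for each finite model $A \models T_k$, a map $f_A \colon A \to M^{<\omega}$ such that $f_A(a)$ has the sort of $x_i$ whenever $P_i(a)$ holds and $\varphi(f_A(a_0),\dots,f_A(a_k)) \leftrightarrow A \models R(a_0,\dots,a_k)$ for all $(a_0,\dots,a_k) \in P_0^A \times \dots \times P_k^A$; the monotonicity axiom of $T_k$ is precisely the compatibility condition satisfied by a suitably stretched $\FOP_k$-array, which is what makes such an $f_A$ available. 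These finite configurations are coherent, so by compactness they assemble into a map $f_0 \colon \Hc_k \to (M')^{<\omega}$ with $M \preceq M'$ for which the sort- and $\varphi$-correspondences hold on all tuples. Finally, applying the modeling property for $\Hc_k$-indexed indiscernibles yields an $\Hc_k$-indexed indiscernible $g \colon \Hc_k \to (M'')^{<\omega}$ locally based on $f_0$; since ``locally based on'' preserves any datum depending only on quantifier-free types of finite tuples --- here the sort of $g(h)$ for $h$ in a given $P_i$, and the truth value of $\varphi$ on $(k+1)$-tuples drawn from $P_0 \times \dots \times P_k$ --- the map $g$ inherits both correspondences and is the required witness.

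The main obstacle is the forward direction, and within it two points: first, passing from a single $\FOP_k$-array to configurations indexed by \emph{arbitrary} finite models of $T_k$ with an exact $\varphi \leftrightarrow R$ correspondence --- this is where one genuinely needs the flexibility of \cite[Prop.~2.3]{FOP} and a careful check that the monotonicity axiom of $T_k$ is exactly the constraint a stretched array satisfies; and second, the modeling property itself, whose justification rests on the Ramsey property of the class of finite models of $T_k$, the real combinatorial input behind the whole equivalence.
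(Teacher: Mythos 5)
This is cited in the paper as a Fact from \cite[Prop.~3.20]{FOP} and is not proved there, so there is no ``paper's own proof'' to compare against. Your sketch is a correct reconstruction of the standard argument that \cite{FOP} itself uses: for the easy direction, embed a finite $\FOP_k$-array pattern into $\Hc_k$ by universality and read it off through $f$; for the hard direction, use \cite[Prop.~2.3]{FOP} to realize all finite models of $T_k$ inside a stretched array, assemble these by compactness, and then invoke the modeling property for $\Hc_k$-indexed indiscernibles, which rests on the Ramsey property of the class of finite models of $T_k$. You correctly identify the two nontrivial points --- that the monotonicity axiom of $T_k$ is exactly what a stretched array satisfies, and that ``locally based on'' preserves the sort and $\varphi$/$R$ correspondences because both are functions of quantifier-free types --- and flag them as the load-bearing steps rather than waving past them. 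No gaps.
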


\begin{prop}\label{prop:if-bounded-k-then-NFOP-k}
  If $T$ has bounded $k$-splitting, then $T$ is $\NFOP_k$.
\end{prop}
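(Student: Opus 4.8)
The plan is to prove the contrapositive: assuming $T$ has $\FOP_k$, I will exhibit, for every regular cardinal $\alpha$, a set of parameters $B$ and a $k$-partitioned type over $B$ that $k$-splits over every subset of $B$ of size $<\alpha$; since $\alpha$ is arbitrary, this gives unbounded $k$-splitting. By Fact~\ref{fact:FOP-k-indisc-witness} I fix $M \models T$, a formula $\varphi(x_0,\dots,x_k)$, and an $\Hc_k$-indexed indiscernible $f : \Hc_k \to M^{<\omega}$ such that $\varphi(f(h_0),\dots,f(h_k))$ holds if and only if $\Hc_k \models R(h_0,\dots,h_k)$ for all $(h_0,\dots,h_k) \in P_0(\Hc_k)\times\dots\times P_k(\Hc_k)$, and I will use $\varphi$, with its last variable in the role of the parameter variable $y$, as the $k$-splitting formula.

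The target configuration, described at the level of indices in $\Hc_k$, is a single $k$-tuple $\bar h = (h_0,\dots,h_{k-1})$ with $h_i \in P_i$, together with families $(w_\beta)_{\beta<\alpha}$ and $(w'_\beta)_{\beta<\alpha}$ in $P_k$ arranged \emph{nested around the $R(\bar h,\cdot)$-cut}: every $h_j$ below everything, then $w'_0 < w'_1 < \cdots$ increasing, then $\cdots < w_1 < w_0$ decreasing, with every $w'_\beta$ below every $w_\gamma$, and with $R(\bar h,w_\beta)$ holding and $\neg R(\bar h,w'_\beta)$ holding for all $\beta$. The key point is that every finite sub-configuration, as an $\Lc_k$-structure, satisfies the universal theory $T_k$: the only $k$-tuple in $P_0\times\dots\times P_{k-1}$ is $\bar h$, and $R(\bar h,\cdot)$ is an up-set of $(P_k,<)$ precisely because the ``positive'' $w_\beta$'s all lie above the ``negative'' $w'_\beta$'s and themselves form a decreasing chain. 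Hence each finite sub-configuration embeds into $\Hc_k$; applying $f$ and using indexed indiscernibility realizes the corresponding finite portion of the configuration — the $\varphi$-conditions together with the indiscernibilities — inside $M$. A compactness argument then produces, in some model of $T$, tuples $a_i := f(h_i)$ and parameters $b_\beta,b'_\beta$ (the $P_k$-images) with $\varphi(a_0,\dots,a_{k-1},b_\beta)\wedge\neg\varphi(a_0,\dots,a_{k-1},b'_\beta)$ for every $\beta$, and $b_\beta \equiv_{\{b_\gamma,b'_\gamma : \gamma\in F\}\cup(a_j)_{j\neq i}} b'_\beta$ for every $i<k$ and every finite $F$ with $\max F < \beta$ (this last family of conditions is what must be imposed in the compactness argument; it is the conjunction, over finite sub-contexts, of the indiscernibilities needed below). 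Setting $B = \{b_\beta,b'_\beta : \beta<\alpha\}$ and $p = \tp(a_0;\dots;a_{k-1}/B)$: given $C\subseteq B$ with $|C|<\alpha$, regularity of $\alpha$ lets me pick $\beta<\alpha$ above all the (fewer than $\alpha$-many) indices appearing in $C$, and then $\varphi$, $b_\beta$, $b'_\beta$ witness that $p$ $k$-splits over $C$. Taking $\alpha = \lambda^+$ for arbitrary $\lambda$ contradicts $\lambda$-bounded $k$-splitting.

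The delicate step will be verifying that $b_\beta \equiv_{C(a_j)_{j\neq i}} b'_\beta$. Translating back to $\Hc_k$ via indexed indiscernibility, this reduces to showing $w_\beta$ and $w'_\beta$ have the same quantifier-free type over $\{w_\gamma,w'_\gamma : \gamma\in D\}\cup\{h_j : j\neq i\}$, where $D$ indexes $C$. Here there are no $(k{+}1)$-ary $R$-atoms to worry about at all: forming one would require a $P_i$-element from this context, but deleting $h_i$ leaves the context with no $P_i$-elements whatsoever (every element of $B$ lives over a $P_k$-index), so only the linear order $<$ is relevant, and the nesting together with the choice of $\beta$ above $\sup D$ guarantees that no context element lies strictly between $w'_\beta$ and $w_\beta$. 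This structural observation — that restricting the $\FOP_k$ pattern to one $k$-tuple and varying only the last coordinate leaves exactly the order to control, which the nesting trivializes — is the conceptual heart of the proof; the remainder is routine compactness bookkeeping, together with a harmless padding of the tuples $a_i$ to a common finite length so that $p$ literally fits the definition of $\lambda$-bounded $k$-splitting for $\gamma$-tuples.
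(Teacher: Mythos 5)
Your proof is correct and takes essentially the same route as the paper: prove the contrapositive, invoke the $\Hc_k$-indexed indiscernible witness from Fact~\ref{fact:FOP-k-indisc-witness}, and embed a nested alternating configuration into $\Hc_k$ whose finite pieces supply the required type-equalities together with the desired $\varphi$-instances. The one minor difference is placement: you put the alternating chain in $P_k$ with a single $k$-tuple in $P_0\times\cdots\times P_{k-1}$ (nesting via $<$, with $x_k$ in the parameter slot of $\varphi$), whereas the paper puts the pairs $b_j,c_j$ in $P_0$ with singletons $a_1,\dots,a_k$ in $P_1,\dots,P_k$ (nesting via $<_k$, with $x_0$ in the parameter slot); your choice is marginally cleaner because no $<_k$-atom can involve a $P_k$-element, so the $<_k$-agreement check that the paper's configuration requires when $a_k$ is the deleted variable simply disappears.
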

\begin{proof}
  We will prove the contrapositive. Assume that $T$ has $\FOP_k$. Find $\varphi(x_0,\dots,x_{k})$, $M \models T$, and $f : \Hc_k \to M^{<\omega}$ as in the statement of \cref{fact:FOP-k-indisc-witness}.

  Fix $\ell < \omega$, and let $K$ consider the following finite model of $T_k$:
  \begin{itemize}
  \item There are elements $b_0 < c_0 < b_1 < c_2 < \dots < b_{\ell -1} < c_{\ell-1}$ in $P_0$. 
  \item For each $i \leq k$ with $i > 0$, there is a single element $a_i$ in $P_i$.
  \item $(b_0,a_1,\dots,a_{k-1})  <_k (b_1,a_1,\dots,a_{k-1}) <_k \dots <_k (b_{\ell -1},a_1,\dots,a_{k-1}) < _k (c_{\ell -1},a_1,\dots,a_{k-1}) <_k \dots <_k (c_1,a_1,\dots,a_{k-1}) <_k (c_0,a_1,\dots,a_{k-1})$. 
  \item For each $j < \ell$, $R(b_j,a_1,\dots,a_{k})$ and $\neg R(c_j,a_1,\dots,a_{k})$ hold. 
  \end{itemize}
    Note that by construction, we have that for each $i < k$ and $j < \ell$, $b_j$ and $c_j$ have the same quantifier-free type over $b_{<j}c_{<j}a_0\dots \hat{a}_i \dots a_{k-1}$.

  \begin{sloppypar}
    We may regard $K$ as an $\Lc_k$-substructure of $\Hc_k$. Note that since $f$ is an $\Hc_k$-indexed indiscernible sequence, we have that for each $i < k$ and $j < \ell$, $f(b_j) \equiv_{f(b_{<j}c_{<j}a_0\dots \hat{a} \dots a_{k-1})} f(c_j)$. Since $\varphi(f(a_0),\dots,f(a_{k-1}),f(b_j))$ and $\neg \varphi(f(a_0),\dots,f(a_{k-1}),f(c_j))$ hold, we have that $\tp(f(a_0);\dots;f(a_{k-1})/f(b_{\leq j}c_{\leq j}))$ $k$-splits over $f(b_{<j}c_{<j})$. By compactness, we can stretch this to arbitrary length, implying that $T$ has unbounded $k$-splitting. \qedhere
  \end{sloppypar}
\end{proof}

At the moment, there isn't a known analog of \cref{fact:FOP-k-indisc-witness} for $\OP_2$, which raises the following question.

\begin{quest}
  Does bounded $2$-splitting imply $\NOP_2$?
\end{quest}

The converse of \cref{prop:if-bounded-k-then-NFOP-k} fails strongly. It turns out that even an $\NIP$ theory can have unbounded $k$-splitting for every $k$. 
Recall that $\kappa(\omega)$ is the smallest $\omega$-Erd\H{o}s cardinal. By convention, let $\kappa(\omega) = \infty$ if no such cardinal exists. %

\begin{fact}[{Kaplan, Shelah \cite{Kaplan_2014}}]\label{fact:NIP-no-indisc}
  $(\ZFC)$ There is a countable $\NIP$ theory $T_{2^{<\omega}}$ such that for every $\alpha < \kappa(\omega)$, there is a sequence $(a_i)_{i < \alpha}$ in a model of $T_{2^{<\omega}}$ containing no infinite indiscernible subsequence.
\end{fact}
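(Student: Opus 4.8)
The plan is to establish two things: first, the existence of a countable $\NIP$ theory $T$ that is, informally, as combinatorially wild as $\NIP$ permits; and second, that in such a $T$ one can, for each $\alpha<\kappa(\omega)$, build a sequence of length $\alpha$ whose only indiscernible subsequences are forced to be homogeneous sets for a prescribed coloring of finite index-tuples. Granting the first point, the theorem follows quickly: by definition of $\kappa(\omega)$ as the least $\omega$-Erd\H{o}s cardinal, for $\alpha<\kappa(\omega)$ the partition relation $\alpha\to(\omega)^{<\omega}_{2}$ fails, so there is a coloring $c\colon[\alpha]^{<\omega}\to 2$ with no infinite $H\subseteq\alpha$ satisfying that $c\res[H]^{n}$ is constant for every $n<\omega$; realizing $c$ as the Ehrenfeucht--Mostowski type data of a sequence $(a_i)_{i<\alpha}$ in a saturated model of $T$, an infinite indiscernible subsequence $(a_i)_{i\in H}$ would make $c\res[H]^n$ constant for every $n$, contradicting the choice of $c$. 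Everything here is carried out in \ZFC; if no $\omega$-Erd\H{o}s cardinal exists the argument applies to all $\alpha$, matching the convention $\kappa(\omega)=\infty$.

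For the construction I would take $T$ to be the theory of the \Fraisse\ limit of a class of finite structures in a countable language that organizes a tree of height $\omega$ carrying extra local ``coloring'' data, amalgamated so that, for every $n$, one may independently prescribe how an increasing $n$-tuple of branches interacts with the colors sitting on its meets, while the tree skeleton keeps the theory $\NIP$. This is, in spirit, a more elaborate cousin of the $\Equiv_k$ examples of \cref{sec:examples}, and it is essentially the Kaplan--Shelah theory $T_{2^{<\omega}}$; a reader content to treat the construction as a black box may simply invoke \cite{Kaplan_2014} for its existence, completeness, countability, quantifier elimination, and $\NIP$. The sequence $(a_i)_{i<\alpha}$ is then produced inside a sufficiently saturated model by choosing the branch and color data of each $a_i$ so that the quantifier-free type of every increasing tuple $(a_{i_0},\dots,a_{i_{n-1}})$ records exactly the value $c(\{i_0,\dots,i_{n-1}\})$, together with the analogous data for every sub-tuple; the amalgamation property of the class is precisely what makes these (mutually overlapping, all-arity) requirements jointly consistent.

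The crux, and the main obstacle, is the tension between richness and tameness. On one hand, the theory must be wild enough that an arbitrary coloring $c\colon[\alpha]^{<\omega}\to 2$ of \emph{all} finite arities can be realized, so that the full strength of $\alpha<\kappa(\omega)$ is used rather than merely $\alpha$ below some small partition cardinal; a pure tree of equivalence relations, or a pure meet-tree with a level function, is $\NIP$ but has essentially finite arity and so is amenable to Erd\H{o}s--Rado, and in such theories indiscernible extraction already succeeds far below $\kappa(\omega)$. On the other hand, the obvious way to encode colorings of $n$-tuples for all $n$ -- a generic $n$-ary relation, or an independent family of colorings -- immediately produces the independence property, so the colorings have to be threaded along a single tree, and one must then verify that every formula of $T$ is, modulo $T$, a Boolean combination of conditions comparing a variable to finitely many fixed branches plus finitely much color data along those branches, hence of finite VC dimension. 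Getting the \Fraisse\ amalgamation to support arbitrary all-arity colorings while this $\NIP$ verification still goes through is the substance of the argument; the remaining deduction -- that $\alpha\to(\omega)^{<\omega}_{2}$ fails exactly when $\alpha<\kappa(\omega)$, and that above $\kappa(\omega)$ the relation $\alpha\to(\omega)^{<\omega}_{\aleph_0}$ forces an infinite indiscernible subsequence of any length-$\alpha$ sequence in a countable language, so the bound is optimal -- is routine.
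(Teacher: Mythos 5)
The paper does not prove this statement itself; it is a Fact cited verbatim from Kaplan--Shelah \cite{Kaplan_2014}, so there is no ``paper's own proof'' to compare against, and your proposal has to be assessed against the source. Your high-level reduction is the right skeleton and matches the Kaplan--Shelah strategy: since $\kappa(\omega)$ is by definition the least cardinal with $\kappa\to(\omega)^{<\omega}_2$, for $\alpha<\kappa(\omega)$ there is a coloring $c\colon[\alpha]^{<\omega}\to 2$ with no infinite set that is simultaneously homogeneous in all arities, and if one can realize $c$ through the EM-type data of a sequence $(a_i)_{i<\alpha}$, then any infinite indiscernible subsequence would contradict the choice of $c$. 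The $\kappa(\omega)=\infty$ convention and the optimality remark (once you correct $\aleph_0$ colors to ``$<\kappa(\omega)$ colors,'' since the number of EM-types in a countable language is $2^{\aleph_0}$, which is still below $\kappa(\omega)$ by inaccessibility) are both fine.

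The gap is in the middle step, which you explicitly flag as a black box but which is in fact the entire content of the theorem. Your sketch asserts that because $T_{2^{<\omega}}$ is a \Fraisse\ limit with a height-$\omega$ indexing tree, its amalgamation lets you realize an \emph{arbitrary} coloring $c\colon[\alpha]^{<\omega}\to 2$ as the quantifier-free type data of a length-$\alpha$ sequence. That is not true as stated, and the construction in \cite{Kaplan_2014} does not proceed that way: the quantifier-free $n$-types in $T_{2^{<\omega}}$ are tightly constrained by the meet-tree and function axioms, so one cannot independently prescribe a bit for every finite tuple --- trying to do so (e.g.\ via a fresh generic $n$-ary relation for each $n$) is exactly how one falls into $\IP$, as you yourself observe. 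What Kaplan and Shelah actually do is thread the failure of the partition relation through a \emph{structured} combinatorial object (roughly, a well-founded tree-of-colorings that matches the layered $(P_\eta,G_{\eta,\eta\concat i})$ architecture of $T_{2^{<\omega}}$), and the work is in showing (i) that $\alpha<\kappa(\omega)$ yields such a structured object and (ii) that the resulting quantifier-free types stay within a fixed $\NIP$ theory. Absent that, your argument is circular: the claim ``$c$ can be realized in a countable $\NIP$ theory'' is the theorem, not a lemma one may assume. So the reduction you give is correct and the citation is appropriate, but the proposal supplies no proof of the step that makes the Fact nontrivial.
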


\begin{prop}\label{prop:k-ineff-not-NFOP}
The theory $T_{2^{<\omega}}$ from \cite{Kaplan_2014} is $\NIP$ but does not have bounded $k$-splitting for any $k < \omega$.
\end{prop}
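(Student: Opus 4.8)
The plan is as follows. That $T_{2^{<\omega}}$ is $\NIP$ is immediate from \cref{fact:NIP-no-indisc}, so the content of the proposition is that $T_{2^{<\omega}}$ has unbounded $k$-splitting for every $k<\omega$. I would prove this by contradiction: assuming $T_{2^{<\omega}}$ has bounded $k$-splitting for some $k$, \cref{prop:bound-on-bounded-k-splitting} upgrades this to $\aleph_1$-bounded $k$-splitting (the theory is countable), and I would then contradict \cref{fact:NIP-no-indisc} by way of \cref{thm:bounded-k-splitting-indisc-extraction}.

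The first step is an absoluteness observation: for a \emph{countable} theory $T$, whether $T$ has bounded $k$-splitting is arithmetic in any real coding $T$, hence absolute between transitive models of $\ZFC$. This follows from (the proof of) \cref{prop:bound-on-bounded-k-splitting} together with completeness of $T$: by the pigeonhole-and-compactness argument given there, $T$ has unbounded $k$-splitting if and only if there is a formula $\varphi$ for which a fixed countable partial type over $\varnothing$ — the type asserting the existence of an $\omega$-indexed $k$-splitting chain with splitting formula $\varphi$ — is consistent with $T$; and since $T$ is complete, the realizability of each finite fragment of that type is decidable from $T$. The same remarks apply to $T_{2^{<\omega}}$ itself, since the Kaplan--Shelah construction is an explicit $\ZFC$ construction (coded by the same real in every model of $\ZFC$), its being $\NIP$ is arithmetic, and the conclusion of \cref{fact:NIP-no-indisc} is a theorem of $\ZFC$.

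Granting this, it suffices to force the contradiction in a model of $\ZFC$ that contains an $\omega$-Erd\H{o}s cardinal. In such a model, \cite[Lem.~13]{Friedman_2001} supplies a cardinal $\mu<\kappa(\omega)$ that is $k$-ineffable for every $k$; in particular $\mu<\kappa(\omega)$, so \cref{fact:NIP-no-indisc} yields a sequence $(a_i)_{i<\mu}$ in a model of $T_{2^{<\omega}}$ with no infinite indiscernible subsequence. On the other hand, our assumption of bounded $k$-splitting for $T_{2^{<\omega}}$ is still in force by absoluteness, so \cref{thm:bounded-k-splitting-indisc-extraction} applied with this $\mu$ produces an $X\subseteq\mu$ with $|X|=\mu$ such that $(a_i)_{i\in X}$ is indiscernible — contradicting the choice of $(a_i)_{i<\mu}$. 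Hence $T_{2^{<\omega}}$ has unbounded $k$-splitting for every $k$ in that model, and by absoluteness the same holds in $V$.

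I expect the main obstacle to be the absoluteness bookkeeping of the first step: one must verify carefully that ``bounded $k$-splitting'' genuinely unfolds into an arithmetic (in the real coding $T$) condition, which rests on pushing the compactness/pigeonhole argument of \cref{prop:bound-on-bounded-k-splitting} through and on completeness of $T_{2^{<\omega}}$. Everything after that is a direct application of \cref{thm:bounded-k-splitting-indisc-extraction} against \cref{fact:NIP-no-indisc}. (If one is content to assume a $k$-ineffable cardinal outright, or merely that $\kappa(\omega)<\infty$ so that \cite[Lem.~13]{Friedman_2001} already provides a $k$-ineffable cardinal below it in $V$, then the absoluteness wrapper can be dropped and the argument reduces to exactly this direct contradiction.)
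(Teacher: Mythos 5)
Your core argument — pitting Theorem~\ref{thm:bounded-k-splitting-indisc-extraction} against Fact~\ref{fact:NIP-no-indisc} via \cite[Lem.~13]{Friedman_2001} — is precisely what the paper does, and your parenthetical at the end correctly identifies that this direct contradiction goes through under an explicit large cardinal hypothesis. The paper's own proof makes that large cardinal assumption openly and then points to Appendix~\ref{sec:unbounded-k-splitting-in-example} for an explicit, hypothesis-free construction of arbitrarily long $k$-splitting chains in $T_{2^{<\omega}}$; that appendix, not any metamathematical wrapper, is how the paper removes the assumption.

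The absoluteness wrapper you propose has a genuine gap. You ``force the contradiction in a model of $\ZFC$ that contains an $\omega$-Erd\H{o}s cardinal,'' but no forcing extension of $V$ creates an $\omega$-Erd\H{o}s cardinal if $V$ lacks one, and $\ZFC$ cannot prove even $\Con(\ZFC + {}$an $\omega$-Erd\H{o}s cardinal$)$ — so the existence of such a model is itself a nontrivial assumption, one that your argument smuggles back in. Worse, even granting $\Con(\ZFC + \text{Erd\H{o}s})$, the model you obtain from the completeness theorem may fail to be an $\omega$-model, and arithmetic statements are not absolute between $V$ and $\omega$-nonstandard models of $\ZFC$. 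Your careful analysis that unbounded $k$-splitting for a complete decidable theory unfolds into a $\Sigma^0_2$ assertion is correct, but $\Sigma^0_2$ truth is exactly the kind of thing that can differ between $V$ and a nonstandard model — a nonstandard witness for the inner existential, or a failure of the universal quantifier only at nonstandard lengths, would break the transfer. To make the transfer work you would need an $\omega$-model (or transitive model) of $\ZFC + \text{Erd\H{o}s}$, which is a strictly stronger hypothesis than what you started with, not a removal of the hypothesis. The honest options are: assume the large cardinal (as the paper's main proof does), or compute the splitting chains explicitly (as the paper's Appendix~\ref{sec:unbounded-k-splitting-in-example} does).
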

\begin{proof}
  $T_{2^{<\omega}}$ is $\NIP$ by \cref{fact:NIP-no-indisc}. Assume the existence of a $k$-inaccessible cardinal for each $k < \omega$ (which is a fairly small assumption in the grand scheme of the large cardinal hierarchy). By \cite[Lem.~13]{Friedman_2001}, the smallest $k$-ineffable cardinal is smaller than the smallest $\omega$-Erd\H{o}s cardinal (if it exists). Therefore if $T_{2^{<\omega}}$ has bounded $k$-splitting, we arrive at a contradiction given \cref{thm:bounded-k-splitting-indisc-extraction} and \cref{fact:NIP-no-indisc}. (For the dour large cardinal skeptic, we give a direct proof of unbounded $k$-splitting in $T_{2^{<\omega}}$ in \cref{sec:unbounded-k-splitting-in-example}.)
\end{proof}

In particular by \cite[Prop~2.8]{FOP}, this theory $T$ has $\NFOP_k$ for all $k > 1$, so this establishes that for any $\ell$ and $k > 1$, $\NFOP_k$ does not imply bounded $\ell$-splitting. Of course, the triangle-free random graph demonstrates that bounded $2$-splitting does not even imply NATP (and so does not imply $\NIP$, $\NTP_2$, or $\NSOP_1$).

This allows us to completely characterize implications between $k$-arity, bounded $k$-splitting, $\NFOP_k$, and $k$-dependence for various $k$ (see Figure~\ref{fig:more-implications}).

\begin{figure}
  \centering
\begin{tikzcd}
	{\text{Stability}} & {\text{NIP}} & {\text{NFOP}_2} & {\text{NIP}_2} & {\text{NFOP}_3} & {\text{NIP}_3} & \cdots \\
	&& \begin{array}{c} \text{Bounded} \\ 2\text{-splitting} \end{array} && \begin{array}{c} \text{Bounded} \\ 3\text{-splitting} \end{array} && \cdots \\
	{\text{Unarity}} && {\text{Binarity}} && {\text{Ternarity}} && \cdots
	\arrow[from=1-1, to=1-2]
	\arrow[from=1-1, to=2-3]
	\arrow[from=1-2, to=1-3]
	\arrow[from=1-3, to=1-4]
	\arrow[from=1-4, to=1-5]
	\arrow[from=1-5, to=1-6]
	\arrow[from=1-6, to=1-7]
	\arrow[from=2-3, to=1-3]
	\arrow[from=2-3, to=2-5]
	\arrow[from=2-5, to=1-5]
	\arrow[from=2-5, to=2-7]
	\arrow[from=3-1, to=1-1]
	\arrow[from=3-1, to=3-3]
	\arrow[from=3-3, to=2-3]
	\arrow[from=3-3, to=3-5]
	\arrow[from=3-5, to=2-5]
	\arrow[from=3-5, to=3-7]
\end{tikzcd}
  \caption{Relationship between $k$-arity, bounded $k$-splitting, $\NFOP_k$, and $k$-dependence. All implications are strict, and any implication not depicted is known to fail.}
  \label{fig:more-implications}
\end{figure}
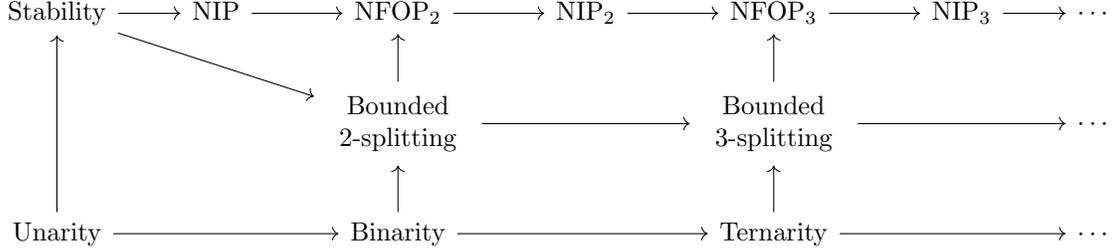

\begin{prop}\label{prop:fig-strict}
  The implications in Figure~\ref{fig:more-implications} are all strict and moreover there are no implications between the classes of theories shown beyond those depicted.
\end{prop}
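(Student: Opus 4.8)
The plan is to note first that every implication drawn in Figure~\ref{fig:more-implications} has already been proved above or is routine: $k$-arity $\Rightarrow$ $1$-bounded $k$-splitting, bounded $k$-splitting $\Rightarrow$ bounded $(k+1)$-splitting (\cref{prop:next-k}), bounded $k$-splitting $\Rightarrow \NFOP_k$ (\cref{prop:if-bounded-k-then-NFOP-k}), stability $\Rightarrow$ bounded $2$-splitting (\cref{fact:stable-split} with \cref{prop:next-k}), $k$-arity $\Rightarrow (k+1)$-arity, unarity $\Rightarrow$ stability, and the top-row implications $\mathrm{Stability}\Rightarrow\NIP\Rightarrow\NFOP_2\Rightarrow\NIP_2\Rightarrow\cdots$, which are classical or recorded in Figure~\ref{fig:implications} (\cite{FOP}). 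The whole content of the proposition is therefore the strictness of the drawn arrows together with the absence of any further implication; and since an arrow $X\to Y$ being strict just means $Y\not\Rightarrow X$, the entire statement reduces to: for each ordered pair of nodes $(X,Y)$ with no directed path from $X$ to $Y$, exhibit a theory satisfying $X$ but not $Y$. A short list of examples suffices, since a single theory $T$ disposes of every such pair $(X',Y')$ with $T\models X'$ and $T\not\models Y'$.

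I would take the following examples. (1) The Kaplan--Shelah theory $T_{2^{<\omega}}$ (\cref{prop:k-ineff-not-NFOP}), which is $\NIP$ --- hence $\NFOP_k$ and $\NIP_k$ for every $k$ --- but has unbounded $k$-splitting for all $k$ and is therefore not $k$-ary for any $k$. (2) For each $m\geq 2$, the generic $m$-uniform hypergraph $H_m$ (so $H_2$ is the random graph): it is $m$-ary, hence has $1$-bounded $m$-splitting and is $\NFOP_m$, but it has $\IP_{m-1}$ (hence $\IP_j$ and $\FOP_j$ for all $j<m$, and is unstable) and has unbounded $(m-1)$-splitting. The last point is proved just as in \cref{prop:T-k-unbounded}: for any cardinal $\lambda$, take parameters $a_0,\dots,a_{m-2}$ and pairs $b_i,b'_i$ ($i<\lambda$) with $R(a_0,\dots,a_{m-2},b_i)$, $\neg R(a_0,\dots,a_{m-2},b'_i)$, and no other edges; by quantifier elimination $b_i\equiv b'_i$ over $\{a_0,\dots,\hat a_j,\dots,a_{m-2}\}\cup C$ for every $j<m-1$ and every $C\subseteq\{b_\ell,b'_\ell:\ell\neq i\}$, so $\tp(a_0;\dots;a_{m-2}/\{b_i,b'_i\}_{i<\lambda})$ $(m-1)$-splits over every set of size $<\lambda$. (The theories $\Equiv_m$ of \cref{sec:examples} give tamer, $\NFOP_m$, witnesses in the cases where the indices match up, but the $H_m$ are needed to cover every arity and every splitting level.) (3) An infinite-dimensional $\mathbb{F}_2$-vector space, which is $\omega$-stable --- hence $\NIP$, $\NFOP_k$, and bounded-$\ell$-splitting for all $k,\ell$ --- but not $n$-ary for any $n$: the equation $x_1+\dots+x_n=0$ is not a Boolean combination of formulas in fewer variables, since $(v_1,\dots,v_{n-1},v_1+\dots+v_{n-1})$ and the tuple obtained by replacing its last coordinate with a vector independent from $v_1,\dots,v_{n-1}$ realize the same type on every $(n-1)$-subtuple. (4) $\mathrm{DLO}$, which is $\NIP$ and unstable. (5) For the one separation not covered by (1)--(4) --- a $\NIP_k$ theory with $\FOP_k$, witnessing strictness of $\NFOP_k\to\NIP_k$ --- the generic linear order on $k$-tuples from $k$ distinct sorts of \cite[Prop.~3.29]{FOP}.

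The rest is bookkeeping against the transitive closure of the diagram: $T_{2^{<\omega}}$ kills every non-implication from a row-one notion into a bounded-$k$-splitting condition, a $k$-arity condition, or stability; infinite-dimensional $\mathbb{F}_2$-vector spaces kill those from stability or a bounded-$k$-splitting condition into a $k$-arity condition (in particular binarity); the random graph kills those from binarity, bounded $2$-splitting, $\NFOP_2$, or $\NIP_2$ into $\NIP$ or stability; the family $\{H_m\}_{m\geq 2}$ kills all the ``backward'' non-implications along the bottom two rows --- $(m+1)$-arity $\not\Rightarrow m$-arity, bounded $(m+1)$-splitting $\not\Rightarrow$ bounded $m$-splitting, bounded $(m+1)$-splitting $\not\Rightarrow\NIP_m$, and so on --- as well as the top-row strictness of $\NIP_k\to\NFOP_{k+1}$ (since $H_{k+1}$ is $\NFOP_{k+1}$ with $\IP_k$); and \cite[Prop.~3.29]{FOP} supplies the strictness of $\NFOP_k\to\NIP_k$. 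A final check against the explicit diagram confirms nothing is left over. The step I expect to be the main obstacle is not a single deep argument but (i) citing the right $n$-dependence facts for random hypergraphs and the $\NIP_k$-status of the generic linear order on $k$-tuples from $k$ sorts from \cite{FOP} and the literature, and (ii) the care required to be sure the finite list of examples exhausts every non-edge of an infinite diagram; the generic-linear-order example is the only input not constructed in this paper, and if it should fail to be $\NIP_k$ one would substitute whatever theory \cite{FOP} uses to separate $\NFOP_k$ from $\NIP_k$.
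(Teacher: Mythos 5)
Your proposal is correct and follows essentially the same strategy as the paper: exhibit a small finite list of theories whose positions in the diagram dispose of every non-edge. The paper's proof is terser but uses the same witnesses in all but one place — the Kaplan--Shelah theory $T_{2^{<\omega}}$ for non-implications from the top row downward, the random $k$-ary hypergraphs for strictness along the lower two rows, and the cited separations from \cite{FOP} for top-row strictness. The one substantive difference is the ``stable but not $k$-ary for any $k$'' witness: you use an infinite-dimensional $\mathbb{F}_2$-vector space and supply the (correct) argument that $x_1+\dots+x_n=0$ is not a Boolean combination of $(n-1)$-variable formulas, whereas the paper simply cites $\mathsf{ACF}_0$ without proof. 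Your version is slightly more self-contained on that point, but both are standard and the overall decomposition is the same.
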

\begin{proof}
  Strictness of the implications in the top row is established in \cite[Sec.~3.4]{FOP}. Strictness of the implications in the second row is established later in \cref{prop:hypergraph-example}, and the examples of the random $k$-ary hypergraph also establish strictness of the bottom row. \cref{prop:k-ineff-not-NFOP} establishes that there are no implications from the top row into the lower two rows, aside from the fact that stable theories have bounded $k$-splitting for any $k$. Finally, the existence of a stable theory that is not $k$-ary for any $k$ (such as $\mathsf{ACF}_0$) establishes that there are no implications from the top two rows into the bottom row.
\end{proof}



%

%

The relationship between bounded $2$-splitting, treelessness, and $\Cc$-lessness is also unclear.

\begin{quest}\label{quest:treeless-bounded-k-splitting}
  Does treelessness or $\Cc$-lessness imply bounded $k$-splitting for $k \geq 2$? Does bounded $2$-splitting imply treelessness or $\Cc$-lessness? 
\end{quest}

$T_{2^{<\omega}}$ in not a counterexample for \cref{quest:treeless-bounded-k-splitting}.

\begin{prop}
  $T_{2^{<\omega}}$ is not treeless.
\end{prop}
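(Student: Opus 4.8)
The plan is to exploit the fact that $T_{2^{<\omega}}$ is, by construction, essentially the theory of an infinite tree, and that treelessness (in the sense of \cite{Kaplan_2024}) is exactly the condition that prevents such tree-like structure from being detected inside an $s$-indiscernible tree. First I would recall the structure of $T_{2^{<\omega}}$ from \cite{Kaplan_2014}: every model carries a uniformly definable ``tree skeleton'' (for instance, a tower $(E_n)_{n<\omega}$ of refining equivalence relations that is two-branching at each level, with the standard model being $2^{<\omega}$), and this skeleton is the source of its subtle behavior with respect to indiscernible sequences (\cref{fact:NIP-no-indisc}). The key observation is that this skeleton yields a configuration that no treeless theory admits: there is a formula $\psi$ (e.g.\ $E_n$ for an appropriate $n$) and, in a monster model, parameters $(t_\eta)_\eta$ indexed by the standard tree such that $\psi(t_\eta,t_\nu)$ records whether the index-meet $\eta\wedge\nu$ has level at least $n$, and for pairs of indices at fixed levels this is genuinely not a function of the lexicographic order of $\eta,\nu$ alone.

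The second step is to turn this into a violation of treelessness. Using the usual extraction of $s$-indiscernible trees (the same machinery that produces the $s$-indiscernible tree in \cref{prop:weak-application-I}), I would extract from $(t_\eta)_\eta$ an $s$-indiscernible tree $(a_\eta)_\eta$ locally based on it. Since $\psi$ holds on \emph{every} pair of the original family whose index-meet is deep enough and fails on suitable pairs with a shallow index-meet, and since such pairs can be chosen with identical level and lexicographic data, local basing forces the same discrepancy in $(a_\eta)_\eta$: there are index tuples $(\eta_0,\eta_1)$ and $(\nu_0,\nu_1)$ with the same quantifier-free type in the reduct to the level predicates and the lexicographic order but with $\psi(a_{\eta_0},a_{\eta_1})$ true and $\psi(a_{\nu_0},a_{\nu_1})$ false. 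That is precisely the kind of behavior treelessness forbids, so $T_{2^{<\omega}}$ is not treeless. Depending on the precise phrasing of treelessness in \cite{Kaplan_2024}, one can alternatively bypass the extraction: for each $n$ the finite fragments of the standard tree already realize, with $\psi=E_n$, a ``tree-coding'' configuration of size $n$, and compactness then supplies it in arbitrary size.

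The main obstacle is matching definitions and handling the bookkeeping, not any serious mathematical difficulty. One must first read off from \cite{Kaplan_2014} the exact shape of the definable tree in $T_{2^{<\omega}}$ (in particular that it has infinite depth and that ``meet at level $\ge n$'' is definable by a single formula once the levels are fixed). The delicate point is that $\omega^{<\omega}$-indexed $s$-indiscernible trees and the two-branching structure of models of $T_{2^{<\omega}}$ do not line up perfectly, so the witnessing index tuples must be chosen so that their relevant quantifier-free type already determines the value of $\psi$ on the original family $(t_\eta)_\eta$ (or, as noted, one sidesteps this with compactness). Finally one checks that the resulting configuration is exactly the one excluded by the definition of treelessness; since treelessness is a weakening of binarity designed to collapse precisely this sort of tree structure, this last check should be straightforward once the definitions are aligned.
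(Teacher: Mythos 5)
Your overall strategy is the right one, and in spirit it matches the paper's: exploit the definable tree structure of $T_{2^{<\omega}}$ to exhibit a treetop indiscernible family that fails to be an indiscernible sequence. But two concrete points separate your plan from a complete proof.

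First, the structural description of $T_{2^{<\omega}}$ is off. It is not a single tree captured by a tower $(E_n)_{n<\omega}$ of refining two-branching equivalence relations; it is a many-sorted theory with a sort $P_\eta$ for each $\eta\in 2^{<\omega}$, each $P_\eta$ carrying its own tree structure (extension order $<_\eta$, meet $\wedge_\eta$, successor functions) of \emph{arbitrary} height, plus interconnecting maps $G_{\eta,\eta\concat i}$. There is no level predicate in the language, so your proposed formula ``$E_n$'' (meet at level $\geq n$) is not directly available. What is available, and what the paper implicitly uses, is a formula comparing meets, e.g.\ $x\wedge_\varnothing y \triangleleft z \wedge_\varnothing w$, and this is what distinguishes tuples with the same lexicographic order type.

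Second, and more substantively, the extraction-plus-local-basing step is where your plan stalls. You correctly flag the ``delicate point'' that $\omega^{<\omega}$-indexed treetop indiscernibles and the two-branching structure inside a model of $T_{2^{<\omega}}$ don't line up, but you don't resolve it. The paper sidesteps extraction entirely: it embeds a copy of $\omega^{\leq\omega^2}$ (so $\omega$-branching, not $2$-branching, and of height $\omega^2$) directly into $P_\varnothing$ in a model of $T_{2^{<\omega}}^\forall$, restricts to the elements of limit height (isomorphic as an $\Lc_{0,P}$-structure to $\omega^{\leq\omega}$), and then observes that by quantifier elimination of $T_{2^{<\omega}}$ this family is \emph{already} a treetop indiscernible — the type of a tuple is determined by its quantifier-free type, which by construction is determined by the quantifier-free type of the indices in $\{\triangleleft,\wedge,<_{\lex},P\}$. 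No extraction or local basing is needed, and the branching mismatch never arises. Your second suggested route (finite fragments plus compactness) does not match the paper's definition of treelessness either, which is formulated in terms of infinitary treetop indiscernibles rather than finite configurations; turning a compactness argument into a treetop indiscernible would still require the extraction machinery you were trying to avoid.
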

\begin{proof}
  (See \cite{Kaplan_2024,Kaplan_2014} for the relevant notation.) We can embed a copy of the tree $\omega^{\leq \omega^2}$ as $(a_{\tau})_{\tau \in \omega^{\leq \omega^2}}$ into $P_{\varnothing}$ in a model of $T_{2^{<\omega}}$ with its natural successor, limit, and meet structure. Let $R \subseteq \omega^{\leq\omega^2}$ be the subset consisting of elements whose height is either $0$ or a limit ordinal. We may regard $R$ as an $\Lc_{0,P}$-structure by interpreting $\wedge$ as the meet, $\tleq$ as the extension order, $<_{\lex}$ as the lexicographic order, and $P$ as the set of elements of height ${\omega^2}$. By quantifier elimination, we now have that $(a_{\tau})_{\tau \in R}$ is a treetop indiscernible which clearly is not an indiscernible sequence in the lexicographic ordering (even over $\varnothing$). Therefore $T_{2^{<\omega}}$ is not treeless.
\end{proof}

There are also higher-arity notions of simplicity. The \emph{$n$-simplicity} defined by Kim, Kolesnikov, and Tsuboi in \cite{Kim_2008} seems like it might be relevant, as it has to do with higher-arity amalgamation problems, but it is not implied by stability. For the other direction, it is relatively easy (as discussed in \cite{Kim_2008}) to show that the theory of a random $k$-ary hypergraph is $n$-simple for all $n$, but we also have the following.

\begin{prop}\label{prop:hypergraph-example}
  For any $k > 1$, the random $k$-ary hypergraph has unbounded $\ell$-splitting for all $\ell < k$.
\end{prop}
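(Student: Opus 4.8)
The plan is to adapt the argument of \cref{prop:T-k-unbounded}. Write $T$ for the theory of the random $k$-ary hypergraph, and recall that $T$ eliminates quantifiers and that its universal part consists precisely of the axioms saying that $R$ is a symmetric $k$-ary relation holding only on tuples of pairwise distinct elements; thus every $k$-ary hypergraph embeds into some model of $T$. Fix $\ell < k$ (so $k - \ell \geq 1$) and a cardinal $\lambda$; I will exhibit a configuration witnessing that $T$ does not have $\lambda$-bounded $\ell$-splitting, which is enough since $\lambda$ is arbitrary. Take distinct elements $a_0,\dots,a_{\ell-1}$ together with, for each $i < \lambda$, two disjoint $(k-\ell)$-tuples $\bar{b}_i$ and $\bar{b}'_i$, all of these elements distinct, and declare that $R$ holds precisely on those $k$-tuples obtained by permuting $(a_0,\dots,a_{\ell-1},\bar{b}_i)$ for some $i < \lambda$, and on no others. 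This is a $k$-ary hypergraph, hence embeds into a model of $T$; set $B = \bigcup_{i<\lambda}(\bar{b}_i \cup \bar{b}'_i)$ and consider the $\ell$-partitioned type $p = \tp(a_0;\dots;a_{\ell-1}/B)$, with each $a_j$ a single-element part.

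Next I would show that $p$ $\ell$-splits, in the sense of \cref{defn:k-splitting}, over every $C \subseteq B$ with $|C| < \lambda$. Since the sets $\bar{b}_i \cup \bar{b}'_i$ are pairwise disjoint, at most $|C| < \lambda$ of them can meet $C$, so fix $\beta < \lambda$ with $(\bar{b}_\beta \cup \bar{b}'_\beta) \cap C = \varnothing$. Take $\varphi(x_0,\dots,x_{\ell-1},\bar{y})$ to be $R(x_0,\dots,x_{\ell-1},\bar{y})$, where $\bar{y}$ is a $(k-\ell)$-tuple of variables, and take the splitting witnesses to be $\bar{b}_\beta, \bar{b}'_\beta \in B$: by construction $\varphi(a_0,\dots,a_{\ell-1},\bar{b}_\beta)$ holds while $\neg\varphi(a_0,\dots,a_{\ell-1},\bar{b}'_\beta)$ holds, so it remains to verify that $\bar{b}_\beta \equiv_{C a_0\dots\hat{a}_j\dots a_{\ell-1}} \bar{b}'_\beta$ for each $j < \ell$. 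By quantifier elimination it is enough to compare quantifier-free types, and here both tuples consist of $k-\ell$ distinct elements lying outside $C \cup \{a_i : i < \ell,\ i \neq j\}$, while the only instances of $R$ involving an element of $\bar{b}_\beta$ are permutations of $(a_0,\dots,a_{\ell-1},\bar{b}_\beta)$ — each of which mentions $a_j$, which is not among the available parameters — and $\bar{b}'_\beta$ appears in no instance of $R$ whatsoever. So the two quantifier-free types agree, and $p$ $\ell$-splits over $C$. As $C$ was arbitrary of size $< \lambda$ and $\lambda$ was arbitrary, this shows $T$ has unbounded $\ell$-splitting.

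The only delicate part is the quantifier-free bookkeeping in the second step, and it works because of how the configuration is designed: the lone $R$-relation separating $\bar{b}_\beta$ from $\bar{b}'_\beta$ is set up so as to require all $\ell$ of the elements $a_0,\dots,a_{\ell-1}$ at once, so that deleting any single $a_j$ makes $\bar{b}_\beta$ and $\bar{b}'_\beta$ indistinguishable, and the choice of $\beta$ with $\bar{b}_\beta, \bar{b}'_\beta$ disjoint from $C$ is exactly what prevents an element of $C$ from breaking the symmetry through an equality. The remaining points — that the structure constructed above is genuinely a $k$-ary hypergraph and that every $k$-ary hypergraph embeds into a model of $T$ — are routine, so I do not anticipate any real obstacle.
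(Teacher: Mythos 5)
Your proof is correct and uses essentially the same configuration as the paper: a tuple of distinct $a$'s together with a long sequence of pairs $\bar b_i,\bar b'_i$, with $R$ holding exactly on the edges built from $(a_0,\dots,a_{\ell-1},\bar b_i)$. The only cosmetic difference is that the paper proves the case $\ell=k-1$ (with singleton $b_i,b'_i$) and deduces the remaining cases from \cref{prop:next-k}, whereas you handle each $\ell<k$ directly with $(k-\ell)$-tuples and spell out the quantifier-free verification that the paper leaves to the reader.
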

\begin{proof}
  Let $T$ be the theory of the random $k$-ary hypergraph. It is sufficient to show that $T$ has unbounded $(k-1)$-splitting by \cref{prop:next-k}. Fix an ordinal $\alpha$. Fix a $(k-1)$-partitioned tuple $a_0;\dots;a_{k-2}$ of distinct elements and two sequences $(b_i)_{i<\alpha}$ and $(b'_i)_{i<\alpha}$ such that the only edges are of the form $\{a_0,\dots,a_{k-2},b_i\}$. This can be witnessed in a model of $T$ and clearly witnesses unbounded $(k-1)$-splitting.
\end{proof}

In hindsight this mismatch doesn't seem too surprising. The difference between stability and simplicity has to do with uniqueness. Simplicity allows for amalgamation of sufficiently independent configurations, but doesn't guarantee uniqueness of these amalgamations. The analog of $3$-amalgamation (i.e., the independence theorem) for stability is the following: If $B \ind_A C$, $d \ind_A B$, $e \ind_A C$, and $d \equiv^{\mathrm{L}}_{A} e$, then there is a \emph{unique} type $p(x) \in S_x(ABC)$ extending $\tp(d/AB)\cup \tp(e/AB)$ that does not fork over $A$. The issue of uniqueness of higher amalgamation problems was originally studied by Hrushovski in the context of stable theories in \cite{HRUSHOVSKI_2012}. In particular, he introduced the notion of \emph{$n$-uniqueness}. Uniqueness of non-forking extensions is $2$-uniqueness, so the natural $k$-ary analog in the sense of this paper is $(k+1)$-uniqueness. This raises a natural question.

\begin{quest}
  Is there a characterization of bounded $k$-splitting in terms of higher-arity amalgamation with some kind of uniqueness condition, such as $(k+1)$-uniqueness? 
\end{quest}

Chernikov has shown that in simple theories with $4$-amalgamation, the conditions of $3$-uniqueness, $2$-dependence, NOP$_2$, NFOP$_2$, and $\Cc$-lessness are equivalent \cite{ChernikovUpcoming2025,ChernikovOberwolfach2023}.

 \begin{quest}
   In simple theories with $4$-amalgamation, are the above conditions equivalent to bounded $2$-splitting?
 \end{quest}

 One thing to note though with regards to this question is that $k$-simplicity, $k$-amalgamation, and $k$-uniqueness are of a different character than $k$-dependence or the various notions of $k$-ary stability. $k$-dependence and $k$-ary stability become weaker as one passes to larger $k$, but $k$-simplicity, $k$-amalgamation, and $k$-uniqueness become stronger. In particular, all $k$-simple theories are simple.


\section{Some speculation}
\label{sec:speculation}

It is natural to ask is whether more of the machinery of stability theory can be generalized to the context of theories with bounded $k$-splitting. Obviously the central notions of stability theory are dividing and forking, and one of the central results is the symmetry of forking. Stability can easily be shown to satisfy the following weak symmetry property: For any model $M$ and parameters $\abar$ and $\bbar$, if $\bbar \indu_M \abar$, then $\tp(\abar/M\bbar)$ does not split over $M$.\footnote{Recall that $\bbar \indu_M \abar$ means that $\tp(\bbar/ M \abar)$ is finitely satisfiable in $M$.} We can directly get a similar result.

\begin{prop}\label{prop:weak-symmetry}
If $T$ has bounded $k$-splitting, then for any $M \models T$ and $\abar_0,\dots,\abar_{k-1},\bbar$, if $\bbar \indu_M \abar_0\dots \abar_{k-1}$, then $\tp(\abar_0;\dots;\abar_{k-1}/M\bbar)$ does not $k$-split over $M$.
\end{prop}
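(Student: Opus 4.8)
The plan is to prove the contrapositive of the inner implication: assuming $T$ has bounded $k$-splitting, $\bbar \indu_M \abar_0\dots\abar_{k-1}$, and yet $\tp(\abar_0;\dots;\abar_{k-1}/M\bbar)$ $k$-splits over $M$, I will manufacture configurations witnessing unbounded $k$-splitting, contradicting \cref{prop:bound-on-bounded-k-splitting}. So fix a witnessing formula $\varphi(x_0,\dots,x_{k-1},y)$ and $b,b' \in M\bbar$ with $b \equiv_{M\abar_0\dots\hat\abar_i\dots\abar_{k-1}} b'$ for each $i<k$ and $\models \varphi(\abar_0,\dots,\abar_{k-1},b)\wedge \neg\varphi(\abar_0,\dots,\abar_{k-1},b')$. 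The first step is a harmless normalization: since $b$ and $b'$ have all their coordinates in $M\cup\bbar$, I may replace $\bbar$ by $\bbar b b'$; this preserves the hypothesis $\bbar \indu_M \abar_0\dots\abar_{k-1}$ because finite satisfiability of $\tp(\bbar/M\abar_0\dots\abar_{k-1})$ in $M$ survives adjoining any tuple from $\dcl(M\bbar)$ (a formula over $M\abar_0\dots\abar_{k-1}$ about such a tuple pulls back to one about $\bbar$, which is realized in $M$). Thus I may assume $b = \bbar\res u$ and $b' = \bbar\res v$ for fixed index sets $u,v$.

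For the core step, I use that $\tp(\bbar/M\abar_0\dots\abar_{k-1})$, being finitely satisfiable in $M$, extends to a global $M$-invariant type $\mathfrak p$. Given an arbitrary cardinal $\lambda$, take the Morley sequence $(\bbar_j)_{j<\lambda}$ of $\mathfrak p$ over $M\abar_0\dots\abar_{k-1}$ with $\bbar_0 = \bbar$; it is indiscernible over $M\abar_0\dots\abar_{k-1}$. Setting $b_j := \bbar_j\res u$ and $b_j' := \bbar_j\res v$, indiscernibility immediately gives $\models \varphi(\abar_0,\dots,\abar_{k-1},b_j)\wedge\neg\varphi(\abar_0,\dots,\abar_{k-1},b_j')$ for every $j$. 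The crucial point to check is that for each $i<k$ and each $j<\lambda$ one has $b_j \equiv_{M\abar_0\dots\hat\abar_i\dots\abar_{k-1}\cup\{b_l,b_l' : l<j\}} b_j'$; since $\{b_l,b_l':l<j\}\subseteq \dcl(M\bbar_{<j})$, this reduces to $\bbar_j\res u \equiv_{M\abar_0\dots\hat\abar_i\dots\abar_{k-1}\bbar_{<j}} \bbar_j\res v$, which I aim to extract from the $M$-invariance of $\mathfrak p$ together with the base identity $b \equiv_{M\abar_0\dots\hat\abar_i\dots\abar_{k-1}} b'$. Granting this, the data $\abar_0,\dots,\abar_{k-1}$, $(b_j)_{j<\lambda}$, $(b_j')_{j<\lambda}$, $\varphi$ form exactly a configuration of the type produced inside the proof of \cref{prop:bound-on-bounded-k-splitting}, so $\tp(\abar_0;\dots;\abar_{k-1}/\{b_j,b_j':j<\lambda\})$ $k$-splits over every proper initial segment, and, $\lambda$ being arbitrary, $T$ has unbounded $k$-splitting.

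The main obstacle is precisely this propagation of the defining equivalences of $k$-splitting along the sequence — upgrading $b \equiv_{M\abar_0\dots\hat\abar_i\dots\abar_{k-1}} b'$, which carries no extra parameters, to the same equivalence over $M\abar_0\dots\hat\abar_i\dots\abar_{k-1}\bbar_{<j}$ for $\bbar_j$. The trouble is that these equivalences are partial types, not single formulas, so they are not preserved under a naïve appeal to finite satisfiability (indeed the statement fails in unstable theories for $k=1$, so bounded $k$-splitting, or at least the global structure built from the coheir, must be used somewhere). The resolution must exploit that the whole initial segment $\bbar_{<j}$ is itself a coheir sequence over $M$ and that $\bbar_j \models \mathfrak p\res M\abar_0\dots\abar_{k-1}\bbar_{<j}$, so that an automorphism over $M$ realizing $b\equiv_{M\abar_0\dots\hat\abar_i\dots\abar_{k-1}}b'$ can be arranged to fix $\bbar_{<j}$ as well after an appropriate conjugation. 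If this invariance bookkeeping proves awkward, the fallback is to abandon the plain Morley sequence and instead build $(\bbar_j)_{j<\lambda}$ by transfinite recursion, at stage $j$ choosing $\bbar_j$ to realize a type over $M\abar_0\dots\abar_{k-1}\bbar_{<j}$ that explicitly contains both a copy of $\tp(\bbar/M\abar_0\dots\abar_{k-1})$ and the desired equivalences with parameters in $\bbar_{<j}$, and verifying consistency of that type via a finite-satisfiability argument using the coheir status of $\bbar_{<j}$.
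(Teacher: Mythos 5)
Your overall strategy—witness $k$-splitting by $b,b' \in M\bbar$, extend the coheir to a global $M$-invariant type $\mathfrak p$, build a Morley sequence $(\bbar_j)$, and read off unbounded $k$-splitting from the propagated equivalences—is the same skeleton the paper uses. You also correctly isolate the crux: lifting $b \equiv_{M\abar_0\dots\hat\abar_i\dots\abar_{k-1}} b'$ to $b_j \equiv_{M\abar_0\dots\hat\abar_i\dots\abar_{k-1}\bbar_{<j}} b'_j$. But the concrete step you propose to carry this out has the coheir relation pointing the wrong way, and this is not a bookkeeping nuisance; it is the entire content of the proof.

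In your forward Morley sequence, what you know is $\bbar_j \indu_M \abar_0\dots\abar_{k-1}\bbar_{<j}$. What the standard invariance lemma needs in order to pass from $e := b_j\abar_0\dots\hat\abar_i\dots\abar_{k-1} \equiv_M e' := b'_j\abar_0\dots\hat\abar_i\dots\abar_{k-1}$ to $e\bbar_{<j} \equiv_M e'\bbar_{<j}$ is that the \emph{appended} tuple $\bbar_{<j}$ realizes an $M$-invariant type over $M e e'$, i.e., $\bbar_{<j} \indu_M \abar_0\dots\abar_{k-1} b_j b'_j$. That is the reverse of what a forward coheir sequence gives you, and for general theories $\indu$ is not symmetric (symmetry of forking is a theorem about simple theories, not a triviality). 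Your sketch of a resolution—``an automorphism \ldots can be arranged to fix $\bbar_{<j}$ after an appropriate conjugation''—does not repair this: the automorphism sending $e$ to $e'$ need not move $\bbar_{<j}$ to anything realizing the same type over $Me'$, precisely because you have no invariance of $\tp(\bbar_{<j}/\text{--})$ in hand.

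The paper's fix is to index the Morley sequence by $\alpha^\ast$ (reverse order), so that for $j' < j$ in the usual order, $\bbar_{j'}$ is constructed \emph{after} $\bbar_j$ and therefore $\bbar_{j'} \indu_M \abar_0\dots\abar_{k-1}\bbar_{j}$; by left-transitivity of $\indu$ this gives $\bbar_{<j} \indu_M \abar_0\dots\abar_{k-1} b_j b'_j$, which is exactly what the invariance lemma consumes. Your fallback (build $\bbar_j$ by recursion, realizing the required type and checking consistency by finite satisfiability) can in principle be made to work, but the finite-consistency check forces you to realize any finite subfamily $\bbar_{j_0},\dots,\bbar_{j_n}$ in descending coheir order—which is the reversed Morley sequence in disguise—and as written you give no argument that the relevant partial type is consistent. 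So the proposal identifies the right obstacle and the right tool but leaves the essential step unproved, and the step as literally stated (forward Morley sequence plus a \naive\ appeal to invariance) is false in general.
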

\begin{proof}
  Assume that $\bbar \indu_M \abar_0\dots \abar_{k-1}$ and $\tp(\abar_0;\dots;\abar_{k-1}/M\bbar)$ does $k$-split over $M$. Let $b,b' \in \bbar$ be elements that witness that $\tp(\abar_0;\dots;\abar_{k-1}/M\bbar)$ $k$-splits over $M$. In particular, note that $b \equiv_{M\abar_0\dots \hat{\abar}_i \dots \abar_{k-1}} b'$ for each $i < k$.

  Fix a global $M$-coheir $p(\xbar)$ such that $\bbar \models p \res M \abar_0\dots \abar_{k-1}$. Fix an ordinal $\alpha$. Build a Morley sequence $(\bbar_j)_{j \in \alpha^\ast}$ (where $\alpha^\ast$ is $\alpha$ with the reversed order) in $p$ over $M\bbar$. Let $(b_i,b'_j)_{j \in \alpha^\ast}$ be the elements of $\bbar_j$ corresponding to $b$ and $b'$. We need to argue that for each $j \in \alpha^\ast$ and each $i < k$, $b_j \equiv_{M\abar_0\dots \hat{\abar}_i\dots \abar_{k-1}\bbar_0\dots \bbar_{j-1}} b'_j$. For each $i < k$, since $b_j \equiv_{M \abar_0\dots \hat{\abar}_i\dots \abar_{k-1}}b'_j$, we have that $\abar_0 \dots \hat{\abar}_i \dots \abar_{k-1} b_j \equiv_M \abar_0 \dots \hat{\abar}_i \dots \abar_{k-1} b'_j$, so what we want follows from the fact that $\bbar_0\dots \bbar_{k-1} \indu_M \abar_0\dots \abar_{k-1}b_j b'_j$. Since we can do this for any $\alpha$, $T$ has unbounded $k$-splitting.
\end{proof}

As mentioned in the introduction, Shelah's original definition of forking was in terms of strong splitting. There are a few ways one might try to adapt this definition to $k$-splitting, for instance, one could try the following non-dividing-like condition on a $k$-partitioned type $\tp(a_0;\dots;a_{k-1}/B)$ with some smaller set of parameters $C \subseteq B$:
\begin{itemize}
\item[$(\heartsuit)$] For every $b \in B$ and sequence $(b_i)_{i< \omega}$ with $b_0 = b$, if $(b_i)_{i < \omega}$ is $Ca_0\dots \hat{a}_i \dots a_{k-1}$-indiscernible for each $i < k$, then there is a $Ca_0\dots a_{k-1}$-indiscernible sequence $(b'_i)_{i <\omega}$ with $b'_0 = b$ such that $b_{<\omega} \equiv_{Ca_0\dots \hat{a}_i\dots a_{k-1}} b'_{<\omega}$ for each $i < k$.
\end{itemize}
And then one could define an analogous non-forking-like condition in terms of global extensions satisfying $(\heartsuit)$. This kind of higher amalgamation of indiscernible sequences is reminiscent of the $k$-distality introduced by Walker in \cite{WALKER_2022}, which is itself related to $k$-dependence. That said, the relationship between the two concepts shouldn't necessarily be that straightforward, as stable theories and distal theories are in some sense polar opposites among $\NIP$ theories.

A more compelling symmetry result than \cref{prop:weak-symmetry} would be something like this, possibly with some additional conditions:
\begin{quote}
  \begin{sloppypar}
    If $T$ has bounded $k$-splitting and $\tp(a_0;\dots;a_{k-1}/Bc)$ does not \underline{\hphantom{$k$-arily fork}} over $B$, then $\tp(c;a_1;\dots;a_{k-1}/Ba_{0})$ does not \underline{\hphantom{$k$-arily fork}} over $B$.
  \end{sloppypar}
\end{quote}
This would mean that this mystery condition on $\tp(a_0;\dots;a_{k-1}/Bc)$ does actually represent some notion of independence over $B$ for the set $\{a_0,\dots,a_{k-1},c\}$. At the moment, however, it is unclear if any advanced structural theory like this is possible. Moreover, the use of generalized indiscernible sequences in \cite{FOP} (and our use of this machinery in \cref{prop:if-bounded-k-then-NFOP-k}) suggests that it may be necessary to go beyond the sole consideration of indiscernible sequences in order to develop such a structure theory.

\appendix

\section{\texorpdfstring{Treelessness implies $\NOP_2$ (and therefore $\NFOP_2$)}{Treelessness implies NOP\texttwoinferior\ (and therefore NFOP\texttwoinferior)}}
\label{sec:treelessness-implies-NFOP}

Here we will give a proof that treeless theories are $\NOP_2$ (sharpening \cite[Prop.~3.21]{Kaplan_2024}) which was worked out by the author and Gabriel Conant during his visit to the University of Maryland in 2023. Chernikov has proved the analogous implication for $\Cc$-lessness in \cite{ChernikovUpcoming2025}.

Recall that a theory $T$ has the \emph{$2$-order property} or $\OP_2$ if there is a formula $\varphi(x;y,z)$ and sequences $(b_i)_{i \in \Qb}$ and $(c_j)_{j \in \Qb}$ such that for any non-decreasing function $f : \Qb \to \Qb$, there is an $a_f$ such that $\varphi(a_f;b_i,c_j)$ holds if and only if $j \leq f(i)$ \cite{Takeuchi2OrderProperty}. By a fairly easy compactness argument, if $T$ has $\OP_2$, then it has the same condition with any linear order in place of $\Qb$.

\begin{defn}
  Given a linear order $L$, say that a set $X \subseteq L^2$ is \emph{downright closed} if for any $(i,j) \in X$ and $(k,\ell) \in L^2$ with $k \geq i$ and $\ell \leq j$, $(k,\ell) \in X$.
\end{defn}

Note that for any non-increasing function $f : L \to L$, the set $\{(i,j) : j \leq f(i)\}$ is downright closed. Note also that any intersection of downright closed sets is downright closed.

\begin{lem}\label{lem:downright-conversion}
  If $T$ has the $2$-order property, then for any linear order $L$, we can find $(b_i)_{i \in L}$ and $(c_j)_{j \in L}$ such that for any downright closed set $X \subseteq L^2$, there is an $a_X$ such that $\varphi(a_X;b_i,c_j)$ holds if and only if $(i,j) \in X$.
\end{lem}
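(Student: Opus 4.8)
The plan is to realize every downright closed $X$ by a single element, by a compactness argument. First I would use that $\OP_2$ holds over any linear order (the compactness remark above): adjoining a new least element $\bot$ to $L$ gives a linear order $L^{*} = \{\bot\}\cup L$, so there are sequences $(b_i)_{i\in L^{*}}$ and $(c_j)_{j\in L^{*}}$ with the property that for every non-decreasing $f:L^{*}\to L^{*}$ there is $a_f$ with $\varphi(a_f;b_i,c_j)\Leftrightarrow j\leq f(i)$. I will take the sequences required by the lemma to be $(b_i)_{i\in L}$ and $(c_j)_{j\in L}$; the single extra index $\bot$ is only scaffolding.

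Given a downright closed $X\subseteq L^2$, consider the partial type
\[
  p_X(x)=\{\varphi(x;b_i,c_j):(i,j)\in X\}\cup\{\neg\varphi(x;b_i,c_j):(i,j)\in L^2\setminus X\},
\]
and let $a_X\models p_X$; this will be the desired element, since it only ever refers to $b_i,c_j$ with $i,j\in L$. By compactness it suffices to satisfy a finite subtype, recorded by finitely many positive points $(i_s,j_s)\in X$ and negative points $(i'_t,j'_t)\notin X$. The one place downright closedness is used is the observation that $i_s\leq i'_t$ forces $j_s<j'_t$: otherwise $k=i'_t\geq i_s$ and $\ell=j'_t\leq j_s$ would witness $(i'_t,j'_t)\in X$. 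Given this, put $f(i)=\max\bigl(\{\bot\}\cup\{j_s:i_s\leq i\}\bigr)$ for $i\in L^{*}$; this finite maximum is well defined, $f$ is non-decreasing, one has $f(i_s)\geq j_s$, and $f(i'_t)<j'_t$ (either the inner set is empty and $f(i'_t)=\bot$, or its maximum is some $j_s<j'_t$). Then $a_f$ obtained from $\OP_2$ satisfies $\varphi(a_f;b_{i_s},c_{j_s})$ (as $j_s\leq f(i_s)$) and $\neg\varphi(a_f;b_{i'_t},c_{j'_t})$ (as $j'_t\not\leq f(i'_t)$) for all $s,t$, hence the finite subtype, and compactness finishes the argument.

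Everything here is routine once the setup is in place; the only point requiring care is the edge case of a negative point whose entire left region avoids $X$, which would force the monotone interpolant below every element of $L$ — this is precisely why I adjoin the bottom element $\bot$ before invoking $\OP_2$. No input beyond the availability of $\OP_2$ over an arbitrary linear order is needed, so I expect this to be the least difficult step among the ones leading up to the main $\NOP_2$ results for treelessness.
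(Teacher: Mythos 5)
Your proof is correct, but it takes a genuinely different route from the paper's. The paper passes to the (Dedekind) completion $K$ of $2\cdot L$, embeds $L$ as the right-hand copies so that every element of $L$ has an immediate predecessor in $K$, realizes the $\OP_2$ configuration over $K$, and then for each downright closed $X$ builds a \emph{single} non-decreasing $f_X:K\to K$ by a supremum; the non-density of $K$ immediately below each $\ell\in L$ is exactly what guarantees that $j\leq f_X(i)$ iff $(i,j)\in X$, after which one application of $\OP_2$ yields $a_X := a_{f_X}$. Your proof sidesteps the completion entirely: you only enlarge $L$ by a bottom element $\bot$, realize $\OP_2$ over $L^\ast=\{\bot\}\cup L$, and then realize the target type $p_X(x)$ by a separate compactness argument, reducing to finite sub-configurations and using a finite maximum (with $\bot$ handling the case where a negative point has no positive point to its weak left). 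The trade-off is conceptual simplicity versus economy: you pay with a second compactness step (realizing $p_X$ in the monster), but you avoid having to argue about suprema and immediate predecessors in a completed order. Both arguments rely on the same unproven-but-standard input — that $\OP_2$ over $\Qb$ yields $\OP_2$ over any linear order — and your key observation (if $(i_s,j_s)\in X$, $(i'_t,j'_t)\notin X$, and $i_s\leq i'_t$, then $j_s < j'_t$) is precisely the finite shadow of the downright-closedness used in the paper's supremum argument, so the two proofs are morally parallel even though the mechanics differ.
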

\begin{proof}
    Let $K$ be the completion of the linear order $2 \cdot L$ (i.e., $L$ with each point replaced by an increasing pair of points). We will regard $L$ as a subset of $K$ by identifying each $\ell \in L$ with $(1,\ell)$ (i.e., the larger of the two points associated to $\ell$).

    By compactness, we can find $(b_i)_{i \in K}$ and $(c_j)_{j \in K}$ such that for any non-decreasing function $f : K \to K$, there is an $a_f$ such that $\varphi(a_f;b_i,c_f)$ holds if and only if $j \leq f(i)$. 

    Fix a downright closed set $X \subseteq L^2$. Define a function $f : K \to K$ by $f_X(i) = \sup\{j \in L : (\exists \ell \in L)\ell \leq i\wedge (\ell,j) \in X\}$, where the supremum is computed in $K$ and therefore always exists. Note that $f_X$ is a non-decreasing function by construction.

    \vspace{0.5em}

    \begin{claim}{}
      For any $(i,j) \in L^2$, $j \leq f_X(i)$ if and only if $(i,j) \in X$.
    \end{claim}
    \begin{claimproof}{}
      First assume that $(i,j) \in X$. The clearly $j$ is in the set in the supremum that defines $f_X(i)$, so $j \leq f_X(i)$.

      Now assume that $j \leq f_X(i)$. By definition, this implies that $j \leq \sup\{m \in L : (\exists \ell \in L)\ell \leq i\wedge (\ell,m) \in X\}$. Since $X$ is downright closed as a subset of $L$, whenever $(\ell,m) \in X$ for some $\ell \leq i$, we also have that $(i,m) \in X$. Therefore $j \leq \sup\{m \in L : (i,m) \in X\}$. Since $K$ is not dense below $j$ (by our choice of embedding of $L$ into $K$), we must have that $j \in \{m : (i,m) \in X\}$, or in other words that $(i,j) \in X$.
    \end{claimproof}
    \vspace{0.5em}

    Now by restricting to $L$ as a subset of $K$, we can take $a_{f_X}$ for $a_X$ for any downright closed $X$ in order to get the required configuration.
\end{proof}

For the remainder of this proof we will use some notation and terminology from \cite{Kaplan_2024}. In particular, we will denote the lexicographical ordering on $\omega^{\leq \omega}$ by $<_{\lex}$ and the extension ordering on $\omega^{\leq \omega}$ by $\triangleleq$. Given $\alpha$ and $\beta$ in $\omega^{\leq \omega}$, we will write $\alpha\wedge \beta$ for the greatest common initial segment of $\alpha$ and $\beta$. A family $(a_\eta)_{\eta \in \omega^\omega}$ is a \emph{treetop indiscernible} if the family $(a_\eta)_{\eta \in \omega^{\leq \omega}}$ (where $a_\eta = \varnothing$ for $\eta \in \omega^{<\omega}$) is a generalized indiscernible family in the sense that for any tuple $\eta_0,\dots,\eta_{k-1} \in \omega^{\omega}$, $\tp(a_{\eta_0},\dots,a_{\eta_{k-1}})$ only depends on the quantifier-free type of $\eta_0,\dots,\eta_{k-1}$ in the language $\{\triangleleq,\wedge,<_{\lex},P\}$, where $P$ is a unary predicate selecting out the leaves of $\omega^{\leq \omega}$ (i.e., the elements of height $\omega$).

Consider the sets $(X_\eta)_{\eta \in \omega^\omega}$ in the proof of \cite[Prop.~3.21]{Kaplan_2024}:
\[
  X_\eta = \{(\nu,\xi) \in \omega^\omega\times \omega^\omega : \eta <_{\lex} \nu <_{\lex} \xi~\text{and}~\eta \wedge \nu \triangleless \nu \wedge \xi\}.
\]

\begin{lem}\label{lem:X-downright-closed}
  For each $\eta \in \omega^\omega$, there are downright closed (relative to $<_{\lex}$) sets $Y_\eta$ and $Z_\eta$ such that $X_\eta = Y_\eta \setminus Z_\eta$.
\end{lem}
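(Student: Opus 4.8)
The goal is to show that the set
\[
  X_\eta = \{(\nu,\xi) \in \omega^\omega\times \omega^\omega : \eta <_{\lex} \nu <_{\lex} \xi \text{ and } \eta \wedge \nu \triangleless \nu \wedge \xi\}
\]
is the difference of two downright closed subsets of $(\omega^\omega)^2$ (with respect to $<_{\lex}$). The first step is to separate the two defining conditions into a ``big'' set that is already downright closed and a ``correction'' set, also downright closed, that we subtract off. I would set
\[
  Y_\eta = \{(\nu,\xi) : \eta <_{\lex} \nu \text{ and } \eta \wedge \nu \triangleless \nu \wedge \xi\}
\]
and let $Z_\eta = Y_\eta \setminus X_\eta$ by definition; then trivially $X_\eta = Y_\eta \setminus Z_\eta$, and the real content is verifying that both $Y_\eta$ and $Z_\eta$ are downright closed. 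The clause ``$\nu <_{\lex} \xi$'' that is present in $X_\eta$ but absent from $Y_\eta$ is exactly what $Z_\eta$ captures: $Z_\eta$ consists of pairs $(\nu,\xi)$ with $\eta <_{\lex} \nu$, $\eta \wedge \nu \triangleless \nu \wedge \xi$, and $\xi \leq_{\lex} \nu$.

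\textbf{Downright closedness of $Y_\eta$.} Recall a pair $(\nu,\xi)$ lies in the downright closure of $(\nu_0,\xi_0)$ when $\nu \geq_{\lex} \nu_0$ and $\xi \leq_{\lex} \xi_0$. So I must show: if $\eta <_{\lex} \nu_0$, $\eta\wedge\nu_0 \triangleless \nu_0 \wedge \xi_0$, and we move to $(\nu,\xi)$ with $\nu \geq_{\lex} \nu_0 >_{\lex} \eta$ and $\xi \leq_{\lex} \xi_0$, then still $\eta\wedge\nu \triangleless \nu\wedge\xi$. The condition $\eta <_{\lex} \nu$ is immediate. For the meet condition, the key combinatorial fact is how $\wedge$ behaves with respect to $<_{\lex}$-monotone moves: since $\eta <_{\lex} \nu_0 \leq_{\lex} \nu$, a standard lemma (the one underlying the quantifier-free analysis of $\omega^{\leq\omega}$ in \cite{Kaplan_2024}) gives $\eta\wedge\nu \trianglelefteq \eta\wedge\nu_0$ or, more precisely, one shows $\eta\wedge\nu = \eta\wedge\nu_0$ or $\eta\wedge\nu \triangleleft \eta\wedge\nu_0$ depending on where the split with $\nu$ occurs relative to the split with $\nu_0$; in all cases $\mathrm{ht}(\eta\wedge\nu) \leq \mathrm{ht}(\eta\wedge\nu_0)$. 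Symmetrically, controlling $\nu\wedge\xi$ requires analyzing both the move in the first coordinate ($\nu_0 \to \nu$) and in the second ($\xi_0 \to \xi$); the point is to show $\mathrm{ht}(\nu\wedge\xi) \geq \mathrm{ht}(\eta\wedge\nu) + 1$, i.e.\ the meet $\nu\wedge\xi$ still properly extends $\eta\wedge\nu$. The cleanest route is a case split on whether the branch point of $\nu$ off of $\eta$ already separates $\nu$ from $\xi_0$, and to track that the relevant initial segments persist under both monotone moves.

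\textbf{Downright closedness of $Z_\eta$ and the obstacle.} The same machinery applies to $Z_\eta$: it adds to the defining conditions of $Y_\eta$ the extra constraint $\xi \leq_{\lex} \nu$, which is itself stable under increasing $\nu$ and decreasing $\xi$, so $Z_\eta$ is an intersection of $Y_\eta$ with a downright closed set, hence downright closed — provided $\{(\nu,\xi) : \xi \leq_{\lex} \nu\}$ is downright closed, which is clear. The main obstacle I anticipate is the bookkeeping in the $Y_\eta$ argument: the interaction between the two monotone coordinate moves and the meet operation on $\omega^{\leq\omega}$ is genuinely fiddly, and one has to be careful that decreasing $\xi$ past $\xi_0$ does not cause $\nu\wedge\xi$ to drop below $\eta\wedge\nu$. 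I would handle this by first isolating a clean sublemma about $\omega^{\leq\omega}$: for $\alpha,\beta,\beta'$ with $\beta' \leq_{\lex} \beta$, either $\alpha\wedge\beta' = \alpha\wedge\beta$, or $\alpha\wedge\beta' \triangleleft \alpha\wedge\beta$, or the split of $\beta'$ from $\beta$ happens strictly above $\alpha\wedge\beta$ (in which case $\alpha\wedge\beta' = \alpha\wedge\beta$ again) — and its mirror for increasing the other argument — and then feed this into both verifications mechanically. Once that sublemma is in hand, the proof reduces to a short case analysis, and the lemma follows.
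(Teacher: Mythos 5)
There is a genuine gap: your proposed $Y_\eta$ is not downright closed, so the ``careful bookkeeping'' you anticipate cannot succeed. Concretely, take $\eta = 0^\omega$, $\nu = 0\concat 1\concat 0^\omega$, $\xi = 0\concat 1\concat 1\concat 0^\omega$. Then $\eta\wedge\nu = \langle 0\rangle$ and $\nu\wedge\xi = \langle 0,1\rangle$, so $\eta\wedge\nu \triangleless \nu\wedge\xi$ and $\eta <_{\lex}\nu<_{\lex}\xi$, giving $(\nu,\xi)\in Y_\eta$ by your definition. Now move downright to $\alpha = 0\concat 2\concat 0^\omega\geq_{\lex}\nu$ and $\beta = 0\concat 0\concat 0^\omega\leq_{\lex}\xi$: here $\eta\wedge\alpha = \langle 0\rangle$ and $\alpha\wedge\beta = \langle 0\rangle$, so the meet condition $\eta\wedge\alpha\triangleless\alpha\wedge\beta$ fails, and $(\alpha,\beta)\notin Y_\eta$. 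This is exactly the danger you flag (``decreasing $\xi$ past $\xi_0$ does not cause $\nu\wedge\xi$ to drop below $\eta\wedge\nu$''), but it really does happen once $\beta$ crosses below $\alpha$; no sublemma about meets will rescue it, because the set you wrote down is simply not downright closed.

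The paper repairs precisely this case by enlarging $Y_\eta$ with a disjunct: it takes
\[
  Y_\eta = \{(\nu,\xi) : \eta <_{\lex}\nu \ \text{and}\ (\eta\wedge\nu\triangleless\nu\wedge\xi \ \text{or}\ \nu\geq_{\lex}\xi)\},\qquad
  Z_\eta = \{(\nu,\xi) : \nu\geq_{\lex}\xi\}.
\]
Since $Z_\eta$ is downright closed and the new $Y_\eta$ differs from yours by unioning in (the $\eta<_{\lex}\nu$ part of) $Z_\eta$, the set difference $Y_\eta\setminus Z_\eta$ is still $X_\eta$. But now downright closedness of $Y_\eta$ splits cleanly into two cases: if after the downright move $\alpha\geq_{\lex}\beta$, the pair is in $Y_\eta$ trivially via the new disjunct; if $\alpha<_{\lex}\beta$, then $\nu\leq_{\lex}\alpha<_{\lex}\beta\leq_{\lex}\xi$, which is exactly the nested configuration in which the meet $\alpha\wedge\beta$ extends $\nu\wedge\xi$ and $\eta\wedge\alpha=\eta\wedge\nu$, so the meet condition transfers. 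Your intuition to isolate the $\nu<_{\lex}\xi$ clause is right; the missing idea is that the remaining meet condition must be weakened (or'd with $\nu\geq_{\lex}\xi$) before it becomes downright closed.
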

\begin{proof}
  Define $Y_\eta$ and $Z_\eta$ as follows:
  \begin{align*}
    Y_\eta &= \{(\nu,\xi) \in \omega^\omega \times \omega^\omega : \eta <_{\lex} \nu~\text{and}~(\nu \wedge \eta \triangleless \nu \wedge \chi~\text{or}~\nu \geq_{\lex} \xi)\} \\
    Z_\eta &= \{(\nu,\xi) \in \omega^\omega \times \omega^\omega : \nu \geq_{\lex} \xi\}.
  \end{align*}
  Clearly $X_\eta = Y_\eta \setminus Z_\eta$. Furthermore, $Z_\eta$ is clearly downright close, so we just need to show that $Y_\eta$ is downright closed. Fix $(\nu,\xi) \in Y_\eta$ and assume that $\alpha \geq_{\lex} \nu$ and $\beta \leq_{\lex} \xi$. Since $\alpha \geq_{\lex} \nu$, we still have that $\eta <_{\lex} \alpha$. At this point there are two cases.
  \begin{enumerate}
  \item Assume $\alpha \geq_{\lex} \beta$. We have immediately that $(\alpha,\beta) \in Y_\eta$.
  \item Assume $\alpha <_{\lex} \beta$. It is a basic fact about trees that if $\nu \leq_{\lex} \alpha \leq_{\lex} \beta \leq_{\lex}\xi$, then $\nu \wedge \xi \triangleleq \alpha \wedge \beta$. Furthermore, if $\nu <_{\lex} \nu \leq_{\lex} \alpha \leq_{\lex} \xi$ and $\nu \wedge \nu \triangleless \nu \wedge \xi$, then $\eta \wedge \nu = \eta \wedge \alpha$. Therefore we have that
    \[
      \eta \wedge \alpha = \eta \wedge \nu \triangleless \nu \wedge \xi \triangleleq \alpha \wedge \beta,
    \]
    and so $\eta \wedge \alpha \triangleless \alpha \wedge \beta$. \qedhere
  \end{enumerate}
\end{proof}

\begin{prop}\label{prop:treeless-NOP}
  Any treeless theory $T$ does not have the $2$-order property.
\end{prop}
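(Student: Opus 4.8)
The strategy is to prove the contrapositive: assume $T$ has the $2$-order property and derive that $T$ is not treeless, i.e., exhibit a treetop indiscernible $(a_\eta)_{\eta \in \omega^\omega}$ that fails to be indiscernible in the lexicographic ordering. The engine is the family of sets $X_\eta = Y_\eta \setminus Z_\eta$ from \cref{lem:X-downright-closed}, which shows each $X_\eta$ is a Boolean combination of two downright closed sets. Since $\OP_2$ gives, via \cref{lem:downright-conversion}, parameters $(b_\nu)_{\nu}$, $(c_\xi)_{\xi}$ indexed by the linear order $(\omega^\omega, <_{\lex})$ with the property that every downright closed $X$ is cut out by some $a_X$ through $\varphi(a_X; b_\nu, c_\xi) \Leftrightarrow (\nu,\xi) \in X$, we can find for each $\eta$ elements $a^Y_\eta$ and $a^Z_\eta$ realizing $Y_\eta$ and $Z_\eta$ respectively; then the formula $\psi(x_1,x_2; y, z) \coloneq \varphi(x_1;y,z) \wedge \neg\varphi(x_2;y,z)$ satisfies $\psi(a^Y_\eta, a^Z_\eta; b_\nu, c_\xi) \Leftrightarrow (\nu,\xi) \in X_\eta$. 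Set $a_\eta \coloneq (a^Y_\eta, a^Z_\eta, b_\eta, c_\eta)$, a single tuple indexed by the leaf $\eta$.

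\textbf{Key steps, in order.} First, apply \cref{lem:downright-conversion} with $L = (\omega^\omega, <_{\lex})$ to obtain $(b_\nu)_{\nu\in\omega^\omega}$, $(c_\xi)_{\xi\in\omega^\omega}$ and, for every downright closed $X$, a parameter $a_X$; invoke this for $X = Y_\eta$ and $X = Z_\eta$ to get $a^Y_\eta$, $a^Z_\eta$. Second, note by \cref{lem:X-downright-closed} that the formula $\psi$ above tests membership in $X_\eta$. Third, collapse indices: the family $(a_\eta)_{\eta\in\omega^\omega}$ with $a_\eta = (a^Y_\eta, a^Z_\eta, b_\eta, c_\eta)$ is a family of finite tuples indexed by leaves of $\omega^{\leq\omega}$. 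Fourth — the crucial normalization step — extract a \emph{treetop indiscernible} $(a'_\eta)_{\eta\in\omega^\omega}$ having the same Ehrenfeucht--Mostowski type as $(a_\eta)_{\eta\in\omega^\omega}$; this uses the existence of treetop-indexed indiscernibles (the modeling property for the relevant \Fraisse\ limit, as in \cite{Kaplan_2024}), so that any quantifier-free type realized among finitely many leaves of $(a'_\eta)$ is already realized among leaves of $(a_\eta)$. Fifth, transfer the witnessing configuration: because $\psi(a^Y_\eta, a^Z_\eta; b_\nu, c_\xi)$ records precisely whether $(\nu,\xi)\in X_\eta$, and membership in $X_\eta$ depends on the quantifier-free type of $\eta,\nu,\xi$ in $\{\triangleleq,\wedge,<_{\lex},P\}$ in a way that is \emph{not} invariant under lexicographic reordering of $\nu,\xi$, the treetop indiscernible $(a'_\eta)$ inherits a pair of leaves on which $\psi$ distinguishes a $<_{\lex}$-increasing pair from a $<_{\lex}$-decreasing pair — exactly as in the argument of \cite[Prop.~3.21]{Kaplan_2024}. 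Hence $(a'_\eta)$ is a treetop indiscernible that is not lexicographically indiscernible, so $T$ is not treeless.

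\textbf{Main obstacle.} The routine parts are the Boolean-combination bookkeeping and invoking \cref{lem:downright-conversion,lem:X-downright-closed}. The delicate point is Step four/five: one must check that passing to a genuine treetop indiscernible $(a'_\eta)$ preserves the key asymmetry of the sets $X_\eta$. Concretely, one needs two specific leaves $\eta_0 <_{\lex} \nu_0 <_{\lex} \xi_0$ (with $\eta_0 \wedge \nu_0 \triangleless \nu_0 \wedge \xi_0$, so $(\nu_0,\xi_0)\in X_{\eta_0}$) and the reordered triple witnessing $(\xi_0,\nu_0)\notin X_{\eta_0}$, and one must verify that the quantifier-free types of these two $3$-leaf configurations in $(a_\eta)$ already differ — which they do, since $X_\eta$ depends nontrivially on $<_{\lex}$ — and that this difference survives in $(a'_\eta)$ by the modeling property. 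This is essentially the content of \cite[Prop.~3.21]{Kaplan_2024}; the improvement here is only that \cref{lem:downright-conversion} lets us realize the needed cuts with a \emph{single} formula $\psi$ built from $\varphi$, upgrading "not treeless" past $\NFOP_2$ to the sharper "$\OP_2$" statement. I would therefore spend most of the writeup making the reduction to \cite[Prop.~3.21]{Kaplan_2024} explicit rather than re-deriving the treetop-indiscernible extraction.

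\begin{proof}[Proof sketch]
  We prove the contrapositive; suppose $T$ has the $2$-order property, witnessed by $\varphi(x;y,z)$. Apply \cref{lem:downright-conversion} with the linear order $L = (\omega^\omega,<_{\lex})$ to obtain families $(b_\nu)_{\nu\in\omega^\omega}$ and $(c_\xi)_{\xi\in\omega^\omega}$ so that every downright closed $X \subseteq L^2$ is defined by some parameter $a_X$ via $\varphi(a_X;b_\nu,c_\xi) \Leftrightarrow (\nu,\xi)\in X$. For each $\eta\in\omega^\omega$, using the sets $Y_\eta, Z_\eta$ of \cref{lem:X-downright-closed}, fix $a^Y_\eta \coloneq a_{Y_\eta}$ and $a^Z_\eta \coloneq a_{Z_\eta}$, and let $\psi(x_1,x_2;y,z) \coloneq \varphi(x_1;y,z)\wedge\neg\varphi(x_2;y,z)$, so that $\psi(a^Y_\eta,a^Z_\eta;b_\nu,c_\xi)$ holds if and only if $(\nu,\xi)\in X_\eta$.

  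Put $a_\eta \coloneq (a^Y_\eta, a^Z_\eta, b_\eta, c_\eta)$ for $\eta\in\omega^\omega$. By the modeling property for treetop-indexed indiscernibles (see \cite{Kaplan_2024}), there is a treetop indiscernible $(a'_\eta)_{\eta\in\omega^\omega}$, say with $a'_\eta = (a'^Y_\eta, a'^Z_\eta, b'_\eta, c'_\eta)$, locally based on $(a_\eta)_{\eta\in\omega^\omega}$: every quantifier-free type (in $\{\triangleleq,\wedge,<_{\lex},P\}$) realized among finitely many leaves of $(a'_\eta)$ is realized among leaves of $(a_\eta)$.

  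It remains, exactly as in the proof of \cite[Prop.~3.21]{Kaplan_2024}, to observe that $(a'_\eta)$ is not indiscernible in the lexicographic order. Pick leaves $\eta_0 <_{\lex} \nu_0 <_{\lex} \xi_0$ with $\eta_0\wedge\nu_0 \triangleless \nu_0\wedge\xi_0$, so that $(\nu_0,\xi_0)\in X_{\eta_0}$ while $(\xi_0,\nu_0)\notin X_{\eta_0}$ (the latter since $X_{\eta_0}$ only contains $<_{\lex}$-increasing pairs). By the previous paragraph there are leaves $\eta_1 <_{\lex} \nu_1 <_{\lex} \xi_1$ of $(a'_\eta)$ realizing the same quantifier-free type, and then
  $\psi(a'^Y_{\eta_1},a'^Z_{\eta_1}; b'_{\nu_1}, c'_{\xi_1})$ holds whereas $\psi(a'^Y_{\eta_1},a'^Z_{\eta_1}; b'_{\xi_1}, c'_{\nu_1})$ fails.
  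The leaf-triples $(\eta_1,\nu_1,\xi_1)$ and $(\eta_1,\xi_1,\nu_1)$ differ only by swapping two leaves via the lexicographic order, so $(a'_\eta)$ is a treetop indiscernible that is not lexicographically indiscernible. Hence $T$ is not treeless.
\end{proof}
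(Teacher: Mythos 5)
Your setup is exactly the paper's: apply \cref{lem:downright-conversion} with $L = (\omega^\omega,<_{\lex})$, split each $X_\eta$ as $Y_\eta\setminus Z_\eta$ via \cref{lem:X-downright-closed}, form the family $(a^Y_\eta, a^Z_\eta, b_\eta, c_\eta)_{\eta\in\omega^\omega}$ with $\psi$ detecting membership in $X_\eta$, and pass to a locally based treetop indiscernible. That all matches.

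However, the final step has a genuine gap. You compare $\psi(a'^Y_{\eta_1},a'^Z_{\eta_1}; b'_{\nu_1}, c'_{\xi_1})$ with $\psi(a'^Y_{\eta_1},a'^Z_{\eta_1}; b'_{\xi_1}, c'_{\nu_1})$ on a \emph{single} increasing triple $\eta_1 <_{\lex} \nu_1 <_{\lex} \xi_1$ and argue that the different truth values contradict indiscernibility because ``the leaf-triples $(\eta_1,\nu_1,\xi_1)$ and $(\eta_1,\xi_1,\nu_1)$ differ only by swapping two leaves.'' But $(\eta_1,\xi_1,\nu_1)$ is not a $<_{\lex}$-increasing triple, and indiscernibility of a sequence only constrains types of \emph{increasing} tuples. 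Put differently: if one sets $\theta(u_0,u_1,u_2)\coloneq\psi(u_0^Y,u_0^Z;u_1^b,u_2^c)$ and $\theta'(u_0,u_1,u_2)\coloneq\psi(u_0^Y,u_0^Z;u_2^b,u_1^c)$, you have shown that $\theta$ and $\theta'$ disagree on the single tuple $(a'_{\eta_1},a'_{\nu_1},a'_{\xi_1})$ — which is consistent with indiscernibility, since $\theta$ and $\theta'$ are different formulas. What is needed is \emph{one} formula that disagrees on \emph{two distinct increasing} tuples. The paper therefore takes four leaves $\eta_0<_{\lex}\eta_1<_{\lex}\eta_2<_{\lex}\eta_3$ with $\eta_0\wedge\eta_1\trianglegess\eta_1\wedge\eta_3$ and $\eta_0\wedge\eta_2\triangleless\eta_2\wedge\eta_3$, so that $(\eta_1,\eta_3)\notin X_{\eta_0}$ but $(\eta_2,\eta_3)\in X_{\eta_0}$; then $\theta$ evaluates differently on the two increasing triples $(a'_{\eta_0},a'_{\eta_1},a'_{\eta_3})$ and $(a'_{\eta_0},a'_{\eta_2},a'_{\eta_3})$, which is a genuine failure of indiscernibility. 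You should replace the swap argument with this four-leaf configuration.
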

\begin{proof}
  We will prove the contrapositive. Let $T$ be a theory with the $2$-order property witnessed by the formula $\varphi(x;y,z)$. We would like to show that $T$ is `treeful' (i.e., not treeless). With the groundwork of our two lemmas, the proof is mostly the same as the proof of \cite[Prop.~3.21]{Kaplan_2024}.

  Consider $(\omega^\omega,<_{\lex})$ as a linear order. By \cref{lem:downright-conversion}, we can find $(b_\eta)_{\eta \in \omega^\omega}$ and $(c_\eta)_{\eta \in \omega^\omega}$ such that for any downright closed $X \subseteq \omega^\omega\times \omega^\omega$, there is an $a_X$ such that $\varphi(a_X;b_\eta,c_\xi)$ holds if and only if $(\eta,\xi) \in X$. 

  Let $(Y_\eta)_{\eta \in \omega^\omega}$ and $(Z_\eta)_{\eta\in \omega^\omega}$ be as in \cref{lem:X-downright-closed}. For each $\eta \in \omega^\omega$, let $d_\eta = a_{Y_\eta}$ and $e_\eta = a_{Z_\eta}$. Consider the treetop-indexed family $(d_\eta e_\eta b_\eta c_\eta)_{\eta \in \omega^\omega}$. Let $\psi(x,w;y,z) = \varphi(x;y,z) \wedge \neg \varphi(w;y,z)$. We now have that for each $\eta,\nu,\xi \in \omega^\omega$, $\psi(d_\eta,e_\eta;b_\nu,c_\xi)$ holds if and only if $(\nu,\xi) \in X_\eta$. Let $(e'_\eta d'_\eta b'_\eta c'_\eta)_{\eta \in \omega^\omega}$ be a treetop indiscernible locally based on $(d_\eta e_\eta b_\eta c_\eta)_{\eta \in \omega^\omega}$. By construction, it will still be the case that $\psi(e'_\eta,d'_\eta;b'_\nu,c'_\xi)$ holds if and only if $(\nu,\xi) \in X_\nu$ (i.e., if and only if $\eta <_{\lex} \nu <_{\lex} \xi$ and $\eta \wedge \nu \triangleless \nu \wedge \xi$). Any $\eta_0,\eta_1,\eta_2,\eta_3 \in \omega^\omega$ satisfying $\eta_0 <_{\lex} \eta_1 <_{\lex} \eta_2 <_{\lex} \eta_3$ and $\eta_0 \wedge \eta_1 \trianglegess \eta_1 \wedge \eta_3$ and $\eta_0 \wedge \eta_2 \triangleless \eta_2 \wedge \eta_3$ now witnesses that $(e'_\eta d'_\eta b'_\eta c'_\eta)_{\eta \in \omega^\omega}$ is not an indiscernible sequence, whereby $T$ is treeful.
\end{proof}

\begin{cor}
  If $T$ is treeless, then it is $\NFOP_2$.
\end{cor}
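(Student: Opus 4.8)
The plan is to deduce this immediately from \cref{prop:treeless-NOP} together with the easy, well-known implication $\NOP_2 \To \NFOP_2$ (the ``Easy'' arrow in Figure~\ref{fig:implications}). First, \cref{prop:treeless-NOP} gives that a treeless theory $T$ does not have the $2$-order property, i.e.\ $T$ is $\NOP_2$. Then, using that $\NOP_2$ implies $\NFOP_2$ --- this is the relationship between Takeuchi's $2$-order property $\OP_2$ (equivalently $\IFOP_2$) and the functional order property $\FOP_2$ in the binary case, discussed in \cite{FOP,TerryWolf2021} --- we conclude that $T$ is $\NFOP_2$. This also matches the structure advertised in the appendix title: all of the mathematical substance is already in \cref{prop:treeless-NOP}, which sharpens \cite[Prop.~3.21]{Kaplan_2024}, and the corollary merely records the (weaker) downstream statement.

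There is accordingly no real obstacle; the proof will be one line, namely ``Immediate from \cref{prop:treeless-NOP} and the implication $\NOP_2 \To \NFOP_2$.'' The only point needing a moment's care is citing the correct direction of the $\OP_2$/$\FOP_2$ relationship: $\NOP_2$ is the stronger hypothesis, so it is what one obtains from treelessness, and $\NFOP_2$ is the desired conclusion, not the other way around.
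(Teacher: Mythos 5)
Your proof is correct and is exactly the paper's argument: the paper's own proof is the single line ``This is immediate from \cref{prop:treeless-NOP} and the fact that $\NOP_2$ implies $\NFOP_2$.'' Your extra commentary about the direction of the implication is accurate but not needed.
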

\begin{proof}
  This is immediate from \cref{prop:treeless-NOP} and the fact that $\NOP_2$ implies $\NFOP_2$.
\end{proof}

Of course it already followed from \cite[Prop.~3.21]{Kaplan_2024} and \cite[Prop.~2.8]{FOP} that any treeless theory is $\NFOP_k$ for any $k\geq 3$. Moreover, by \cite[Ex.~3.17]{Kaplan_2024}, we know that the inclusion of treeless theories into $\NOP_2$ theories is strict. As noted in \cite{FOP}, it is open whether the inclusion of $\NOP_2$ theories into $\NFOP_2$ theories is strict, but by \cite[Prop.~3.25]{FOP} this is the only implication in Figure~\ref{fig:implications} that is not known to be strict.

\section{Explicit unbounded $k$-splitting in the Kaplan-Shelah theory}
\label{sec:unbounded-k-splitting-in-example}

Here we will remove the large cardinal assumption from \cref{prop:k-ineff-not-NFOP}. See \cite[Sec.~3]{Kaplan_2014} for the relevant notation (except for concatenation of sequences, which we will denote by $\sigma \concat \tau$). We will be working with the specific case of $S = 2^{<\omega}$, although our argument makes it clear that we get unbounded $k$-splitting in $T_S$ whenever the indexing tree $S$ has height at least $k$, so even $S = (\omega, <)$ would work for our purposes here. It is likely that there is an even simpler example, but we have not pursued this. %

Fix $k < \omega$ and a limit ordinal $\alpha$. We will describe a model $M$ of $T_{2^{<\omega}}^{\forall}$ containing a $k$-splitting chain of length $\alpha$. Let each tree $(P_\eta,<_\eta)_{\eta \in 2^{<\omega}}$ in $M$ be a copy of $2^{<\omega\cdot \alpha+\omega}$ (ordered by extension). Everything important in the construction will be happening in the trees $(P_{0^n}, <_{0^n})_{n < k}$. Note that all of the $\Lc_{2^{<\omega}}$-structure of $M$ is now defined except for the functions $G_{\eta,\eta\concat i}$ for $\eta \in 2^{<\omega}$ and $i < 2$. The definitions of the $G_{\eta,\eta\concat i}$'s will be clearer if we name some specific elements of the above trees first before defining the $G_{\eta,\eta\concat i}$'s. Fix the following names:
\begin{itemize}
\item For each $i < k$, let $a_i = 0^{\omega\cdot \alpha}$ in $P_{0^i}$.
\item Let $a'_{k-1} = 1^{\omega\cdot \alpha}$ in $P_{0^{k-1}}$.
\item For each $i < k-1$ and $\beta < \alpha$, let $c_{i,\beta} = 0^{\omega \cdot \beta}\concat 1$ and let $b_{i,\beta} = c_{i,\beta} \concat 1^{\omega}$.
\end{itemize}
Note that $\mathrm{suc}_{0^i}(a_i\wedge_{0^i} b_{i,\beta},b_{i,\beta}) = c_{i,\beta}$ for each $i < k-1$ and $\beta < \alpha$. Now we need to give a complete definition of the $G_{\eta,\eta\concat i}$'s, but first we should specify the important values for later in the proof. What matters is the following:
\begin{itemize}
\item $G_{0^i,0^{i+1}}(c_{i,\beta}) = b_{i+1,\beta}$ for each $i < k-2$.
\item $G_{0^{k-2},0^{k-1}}(c_{k-2,\beta})$ is $a_{k-1}$ if $\beta$ is an even ordinal and is $a'_{k-1}$ if $\beta$ is an odd ordinal.
\end{itemize}
The rest of the definition is just chosen to satisfy the axioms of $T_{2^{<\omega}}^{\forall}$ and avoid spoiling the type equalities we need to establish $k$-splitting: %
\begin{itemize}
\item For each $i < k-2$ and $\beta < \alpha$, if $\sigma\in P_{0^i}$ is a finite extension of $c_{i,\beta}$, let $G_{0^i,0^{i+1}}(\sigma) = b_{i+1,\beta}$.
\item For even $\beta < \alpha$, if $\sigma$ is a finite extension of $c_{k-2,\beta}$, let $G_{0^{k-2},0^{k-1}}(\sigma) = a_{k-1}.$
\item For odd $\beta < \alpha$, if $\sigma$ is a finite extension of $c_{k-2,\beta}$, let $G_{0^{k-2},0^{k-1}}(\sigma) = a'_{k-1}.$
\item For any $\eta \in 2^{<\omega}$, $i < 2$, and $\sigma \in \mathrm{Suc}(P_\eta)$, if $G_{\eta,\eta\concat i}(\sigma)$ is not already defined, let it be $\varnothing$ in $P_{\eta \concat i}$.
\item For $\sigma \notin \bigcup_{\eta \in 2^{<\omega}}\mathrm{Suc}(P_\eta)$, $G_{\eta,\eta\concat i}(\sigma) = \sigma$ (as required by $T_{2^{<\omega}}^{\forall}$).
\end{itemize}
This completes the description of the model $M \models T_{2^{<\omega}}^{\forall}$.

Since $T_{2^{<\omega}}$ has quantifier elimination and is the model completion of $T_{2^{<\omega}}^{\forall}$ \cite[Cor.~3.20]{Kaplan_2014}, this structure $M$ uniquely specifies the type of a subset of a model of $T_{2^{<\omega}}$. Our goal is to show that for every even $\beta < \alpha$, $\tp(a_0;\dots;a_{k-1}/b_{0,\leq \beta+1})$ $k$-splits over $b_{0,<\beta}$, specifically as witnessed by the elements $b_{0,\beta}$ and $b_{0,\beta+1}$. The formula on which they differ is (relatively) straightforward:
\begin{align*}
  f_0(x_0,y) &\coloneq G_{\varnothing,0}(\mathrm{suc}_{\varnothing}(x_0 \wedge_{\varnothing} y,y)), \\
  f_1(x_0,x_1,y) &\coloneq G_{0,0^2}(\mathrm{suc}_{0}(x_1 \wedge_{0} f_0(x_0,y),f_0(x_0,y))), \\
  f_2(x_0,x_1,x_2,y) &\coloneq G_{0^2,0^3}(\mathrm{suc}_{0^2}(x_2 \wedge_{0^2} f_1(x_0,x_1,y),f_1(x_0,x_1,y))), \\
  & \vdots  \\
  f_{k-2}(x_0,\dots,x_{k-2},y) &\coloneq G_{0^{k-2},0^{k-1}}(\mathrm{suc}_{0^{k-2}}(x_{k-2} \wedge_{0^{k-2}} f_{k-3}(x_0,\dots,x_{k-3},y),f_{k-3}(x_0,\dots,x_{k-3},y))), \\
  \varphi(x_0,\dots,x_{k-1},y) &\coloneq (f_{k-2}(x_0,\dots,x_{k-2},y) = x_{k-1}).
\end{align*}
A direct calculation now shows that for any $\beta < \alpha$, we have
\begin{align*}
  f_0(a_0,b_{0,\beta}) &= G_{\varnothing,0}(\mathrm{suc}_{\varnothing}(a_0\wedge_{\varnothing} b_{0,\beta},b_{0,\beta})) =  G_{\varnothing,0}(c_{0,\beta}) = b_{1,\beta},\\
  f_1(a_0,a_1,b_{0,\beta}) &= G_{0,0^2}(\mathrm{suc}_{0}(a_1\wedge_{0} b_{1,\beta},b_{1,\beta})) =  G_{0,0^2}(c_{1,\beta}) = b_{2,\beta}, \\
                       & \vdots \\
  f_{k-3}(a_0,\dots,a_{k-3},b_{0,\beta}) &= G_{0^{k-3},0^{k-2}}(\mathrm{suc}_{0^{k-3}}(a_{k-3}\wedge_{0^{k-3}} b_{k-3,\beta},b_{k-3,\beta})) =  G_{0^{k-3},0^{k-2}}(c_{k-3,\beta}) = b_{k-2,\beta}, \\
  f_{k-2}(a_0,\dots,a_{k-2},b_{0,\beta}) & = \dots =  G_{0^{k-2},0^{k-1}}(c_{k-2,\beta}) = \begin{cases}
    a_{k-1}, & \beta~\text{even} \\
    a'_{k-1}, & \beta~\text{odd}
  \end{cases}, \\ %
\end{align*}
and so in particular $\varphi(a_0,\dots,a_{k-1},b_{0,\beta})$ is true if and only if $\beta$ is even.

For any $\beta < \alpha$, let $M_\beta$ be the substructure of $M$ consisting of all elements (in each $P_\eta$) of height less than $\omega\cdot \beta$.

\begin{lem}\label{lem:generation}
  For any $i < k$, $\beta < \alpha$, and $\gamma < \alpha$ with $\beta \leq \gamma$, the substructure of $M$ generated by $M_\beta \cup \{a_0,\dots,\hat{a}_i,\dots,a_{k-1},b_{0,\gamma}\}$ consists of
  \begin{itemize}
  \item the elements of $M_\beta$,
  \item the elements $a_0,\dots,\hat{a}_i,\dots,a_{k-1}$,
  \item for each $j< i$, $0^{\omega\cdot \gamma}\concat 0^n$ and $0^{\omega\cdot \gamma}\concat 1\concat 0^n$ for $n<\omega$ as elements of $P_{0^j}$ (including in particular $c_{j,\gamma}$), and
  \item the element $b_{j,\gamma} \in P_{0^j}$ for each $j \leq i$.
  \end{itemize}
\end{lem}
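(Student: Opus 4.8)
The plan is to prove the two inclusions $N \subseteq \langle A\rangle_M$ and $\langle A\rangle_M \subseteq N$ separately, where $A = M_\beta \cup \{a_0,\dots,\hat a_i,\dots,a_{k-1},b_{0,\gamma}\}$ is the given generating set, $N$ denotes the set described by the four bullet points, and $\langle\,\cdot\,\rangle_M$ is the $\Lc_{2^{<\omega}}$-substructure generated in $M$. The function symbols whose closure must be tracked are the tree operations on the $P_\eta$ (the meets $\wedge_\eta$ and the successor operations $\mathrm{suc}_\eta$) together with the connecting functions $G_{\eta,\eta\concat i}$; the predicates $P_\eta$ and orders $\le_\eta$ impose no constraints.

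For $N \subseteq \langle A\rangle_M$: the elements $a_0,\dots,\hat a_i,\dots,a_{k-1}$ and all of $M_\beta$ already lie in $A$, so it remains to produce, for each $j < i$, the elements $0^{\omega\cdot\gamma}\concat 0^n$ and $0^{\omega\cdot\gamma}\concat 1\concat 0^n$ of $P_{0^j}$, and for each $j \le i$ the element $b_{j,\gamma}$ of $P_{0^j}$. I would do this by induction on $j$, essentially re-running the term computations $f_0,f_1,\dots$ displayed just before the lemma: $b_{0,\gamma}\in A$; and given $b_{j,\gamma}$ with $j<i$, using $a_j\in A$ one obtains $0^{\omega\cdot\gamma}=a_j\wedge_{0^j}b_{j,\gamma}$, then $c_{j,\gamma}=\mathrm{suc}_{0^j}(0^{\omega\cdot\gamma},b_{j,\gamma})$, then the remaining listed finite extensions of $0^{\omega\cdot\gamma}$ by iterating $\mathrm{suc}_{0^j}(\,\cdot\,,a_j)$ and $\mathrm{suc}_{0^j}(\,\cdot\,,b_{j,\gamma})$, and finally $b_{j+1,\gamma}=G_{0^j,0^{j+1}}(c_{j,\gamma})$, which advances the induction. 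The induction halts at $j=i$ since $a_i\notin A$, having generated exactly the list in the statement.

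For $\langle A\rangle_M \subseteq N$: since $A\subseteq N$ it suffices to verify that $N$ is closed under the $\wedge_\eta$'s, the $\mathrm{suc}_\eta$'s, and the $G_{\eta,\eta\concat i}$'s. This is a finite case analysis. For the tree operations: in each $P_{0^j}$ with $j<i$ the elements of $N$ outside $M_\beta$, together with $a_j$, span a small explicit subtree (the branch up to $a_j$, the branch up to $b_{j,\gamma}$, and the listed finite extensions of $0^{\omega\cdot\gamma}$ and of $c_{j,\gamma}$), and one checks that meets and successors among these and with $M_\beta$ stay in $N$ — a meet of $a_j$ or of $b_{j,\gamma}$ with an element of $M_\beta$ is an all-zero initial segment and hence lies in $M_\beta$, and $M_\beta$ is a substructure of $M$ by definition. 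For the $G$'s one reads off the explicit definition of the $G_{0^j,0^{j+1}}$'s: $G$ carries $c_{j,\gamma}$ and its finite extensions to $b_{j+1,\gamma}$ (which lies in $N$ precisely because $j+1\le i$ in the relevant range), $G$'s with the other index and $G$'s on non-successor nodes return $\varnothing$ or the evident element of the next tree, and $G$ maps $M_\beta$ back into $M_\beta$.

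The step I expect to be the main obstacle is this last closure check under the $G_{\eta,\eta\concat i}$'s, since these are the functions that move between the trees $P_{0^j}$ — the only way an iterated term could hope to escape $N$. One must pin down the precise behaviour of $G$ on the limit-level and root nodes and the exact height convention defining $M_\beta$, so as to be sure that the $G$-images of elements of $M_\beta$ (in particular the various $b_{\bullet,\beta'}$ with $\beta'<\beta$) genuinely stay inside $N$, and one must check that the slightly asymmetric description in the statement — finite extensions of $c_{j,\gamma}=0^{\omega\cdot\gamma}\concat 1$ but only $0$-extensions of $0^{\omega\cdot\gamma}$ — is exactly what closing $A$ under $\wedge_\eta$, $\mathrm{suc}_\eta$, and $G_{\eta,\eta\concat i}$ produces, neither more nor less.
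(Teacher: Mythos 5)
Your two-inclusion strategy — generate the listed elements via the $f_j$-style term computations in one direction, then verify closure of the listed set under $\wedge_\eta$, $\mathrm{suc}_\eta$, and the $G_{\eta,\eta\concat i}$'s in the other — is exactly what the paper's one-line proof (``it is straightforward to check that each of the elements listed in the lemma is generated\ldots and moreover that this list is closed under the functions of $\Lc_{2^{<\omega}}$'') is doing. The delicate points you flag at the end, in particular whether the $G$-images of the high elements of $M_\beta$ (and the default values of $G$ and $\mathrm{suc}$ on limit and non-successor nodes) really land back inside the four listed classes, are precisely the right things to nail down against the exact conventions of \cite{Kaplan_2014}, which this paper does not reproduce.
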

\begin{proof}
  It is straightforward to check that each of the elements listed in the lemma is generated by the elements of $M_\beta \cup \{a_0,\dots,\hat{a}_i,\dots,a_{k-1},b_{0,\gamma}\}$ and moreover that this list is closed under the functions of $\Lc_{2^{<\omega}}$.
\end{proof}

\begin{lem}\label{lem:same-type}
  For any $i < k$, $\beta < \alpha$, and $\gamma,\gamma'$ with $\beta \leq \gamma,\gamma'$, the map consisting of the identity on $M \cup \{a_0,\dots,\hat{a}_i,\dots,a_{k-1}\}$ and $b_{0,\gamma}\mapsto b_{0,\gamma'}$ extends uniquely to an isomorphism of the substructure of $M$ generated by $M_\beta\cup\{a_0,\dots,\hat{a}_i,\dots,a_{k-1},b_{0,\gamma}\}$ to that generated by $M_\beta\cup\{a_0,\dots,\hat{a}_i,\dots,a_{k-1},b_{0,\gamma'}\}$.

  In particular, we have that in an ambient model of $T_{2^{<\omega}}$, $b_{0,\gamma} \equiv_{M_\beta a_0\dots \hat{a}_i \dots a_{k-1}}b_{0,\gamma'}$.
\end{lem}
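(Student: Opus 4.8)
The plan is to reduce the asserted type-equality to an explicit isomorphism of the two finitely-generated substructures and then invoke quantifier elimination for $T_{2^{<\omega}}$ (\cite[Cor.~3.20]{Kaplan_2014}). Write $N$ and $N'$ for the substructures of $M$ generated by $M_\beta\cup\{a_0,\dots,\hat a_i,\dots,a_{k-1},b_{0,\gamma}\}$ and by $M_\beta\cup\{a_0,\dots,\hat a_i,\dots,a_{k-1},b_{0,\gamma'}\}$ respectively. By \cref{lem:generation}, these have the same underlying set except that each $\gamma$-indexed element appearing in the list there (the elements $b_{j,\gamma}$ and $c_{j,\gamma}$ and the strings $0^{\omega\cdot\gamma}\concat 0^n$, $0^{\omega\cdot\gamma}\concat 1\concat 0^n$ lying in the trees $P_{0^j}$ for $j\le i$) is replaced by its $\gamma'$-indexed counterpart. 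Let $\pi\colon N\to N'$ be the bijection that is the identity on $M_\beta\cup\{a_j:j\neq i\}$ and sends each $\gamma$-indexed generator to its $\gamma'$-analogue; in particular $\pi(b_{0,\gamma})=b_{0,\gamma'}$.

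First I would verify that $\pi$ is an isomorphism of $\Lc_{2^{<\omega}}$-structures, checking the symbols of $\Lc_{2^{<\omega}}$ one at a time. On $M_\beta$ there is nothing to do. For the tree orders and meet operations $<_{0^j}$, $\wedge_{0^j}$ on each sort, the point is that $\beta\le\gamma,\gamma'$, so every $\gamma$-indexed (resp.\ $\gamma'$-indexed) generator has height at least $\omega\cdot\beta$ and agrees with the all-zero branch of its tree below level $\omega\cdot\beta$; hence, whenever such a symbol is applied to a tuple from $N$ involving a $\gamma$-indexed element, the value is determined by the part of $M_\beta$ below level $\omega\cdot\beta$ together with the finite shape data of the $\gamma$-indexed elements involved (which tree, and the finite string extending $0^{\omega\cdot\beta}$), and is therefore unchanged upon replacing $\gamma$ by $\gamma'$. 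For the connecting functions $G_{0^j,0^{j+1}}$: their only non-default values on generators of $N$ are the ones fixed in the construction of $M$, namely $G_{0^j,0^{j+1}}(c_{j,\gamma})=b_{j+1,\gamma}$ for $j<k-2$, and, when $i=k-1$, $G_{0^{k-2},0^{k-1}}(c_{k-2,\gamma})$ equal to $a_{k-1}$ or $a'_{k-1}$ according to the parity of $\gamma$; $\pi$ sends each of these to the corresponding value for $\gamma'$, and every other application of a $G$ to a generator yields $\varnothing$ or a fixed point. (One uses $\beta\le\gamma$ here as well, to ensure that no element of $M_\beta$ is a finite extension of any $c_{j,\gamma}$, so that this description of the $G$'s on the relevant arguments is accurate.) Granting these checks, $\pi$ and $\pi^{-1}$ preserve atomic formulas, so $\pi$ is an isomorphism, and it is the unique one extending the prescribed assignment since $N$ is generated by $M_\beta\cup\{a_j:j\neq i\}\cup\{b_{0,\gamma}\}$.

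Finally, since $T_{2^{<\omega}}$ has quantifier elimination and $M$ sits inside a model of $T_{2^{<\omega}}$, any two tuples that generate isomorphic substructures via an isomorphism fixing a common parameter set realize the same type over that set; applying this to $b_{0,\gamma}$ and $b_{0,\gamma'}$, which $\pi$ interchanges while fixing $M_\beta\cup\{a_j:j\neq i\}$ pointwise, yields $b_{0,\gamma}\equiv_{M_\beta a_0\dots\hat a_i\dots a_{k-1}}b_{0,\gamma'}$. The main obstacle is the middle paragraph — the symbol-by-symbol check that $\pi$ is an isomorphism — and within it the bookkeeping for the connecting functions $G_{0^j,0^{j+1}}$, since these are the only place where the $a_j$'s, $b_{j,\gamma}$'s, and $c_{j,\gamma}$'s interact across different trees; the rest reduces to the observation that the $\gamma$-indexed generators hang off $M_\beta$ in a way that depends only on finite data.
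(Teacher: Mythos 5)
Your proof mirrors the paper's very terse argument (which simply cites Lemma~\ref{lem:generation} and quantifier elimination), but in filling in the details you expose a genuine gap, precisely in the case $i=k-1$. You assert that ``every other application of a $G$ to a generator yields $\varnothing$ or a fixed point,'' but this is false: for any $\delta<\beta$, the element $c_{k-2,\delta}$ has height $\omega\cdot\delta+1<\omega\cdot\beta$ and so lies in $M_\beta$, yet $G_{0^{k-2},0^{k-1}}(c_{k-2,\delta})$ equals $a_{k-1}$ or $a'_{k-1}$ according to the parity of $\delta$. In particular $M_\beta$ is not closed under the $G$'s (so it is not literally a substructure as claimed), and already for $\beta\ge 1$ the element $a_{k-1}=G_{0^{k-2},0^{k-1}}(c_{k-2,0})$ is generated from $M_\beta$ alone, forcing $\pi$ to fix it. But when $\gamma$ and $\gamma'$ have different parities --- which is exactly the case used in the intended application, with $\gamma=\beta$ even and $\gamma'=\beta+1$ odd --- $\pi$ must also carry $G_{0^{k-2},0^{k-1}}(c_{k-2,\gamma})=a_{k-1}$ to $G_{0^{k-2},0^{k-1}}(c_{k-2,\gamma'})=a'_{k-1}$, so no such isomorphism exists.

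This is not merely a gap in your writeup but a problem with the statement itself for $i=k-1$ and $\beta\ge 1$: the atomic formula $f_{k-2}(a_0,\dots,a_{k-2},y)=f_{k-2}(a_0,\dots,a_{k-2},b_{0,0})$ has parameters in $b_{0,<\beta}\cup\{a_0,\dots,\hat{a}_{k-1},\dots,a_{k-1}\}$ and holds of $b_{0,\gamma}$ precisely when $\gamma$ is even, so $b_{0,\beta}$ and $b_{0,\beta+1}$ do not have the same type over this parameter set, contradicting the ``in particular'' clause. (Relatedly, Lemma~\ref{lem:generation} is also imprecise at $i=k-1$: it refers to $b_{k-1,\gamma}$, which is undefined, and omits $a_{k-1}$ or $a'_{k-1}$, which are generated via $G_{0^{k-2},0^{k-1}}$.) A correct argument would need to change the construction so that the parity data of smaller indices is not recoverable over $M_\beta$ --- for instance by replacing the two fixed targets $a_{k-1},a'_{k-1}$ with a fresh incomparable pair of $P_{0^{k-1}}$-branches for each $\beta$, rather than reusing the same two elements for every level.
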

\begin{proof}
  The fact that the map extends to an isomorphism is immediate from \cref{lem:generation}. The final statement follows from quantifier elimination for $T_{2^{<\omega}}$.
\end{proof}

Applying \cref{lem:same-type} to the specific case of $\beta$ and $\beta+1$ for even $\beta$ now gives that $\tp(a_0;\dots;a_{k-1}/b_{\leq \beta+1})$ $k$-splits over $b_{0,<\beta}$. Therefore $\tp(a_0;\dots;a_{k-1}/b_{< \alpha})$ $k$-splits over $b_{<\gamma}$ for any $\gamma < \alpha$. Since we can do this for any $k < \omega$ and ordinal $\alpha$, we have that $T_{2^{<\omega}}$ has unbounded $k$-splitting for every $k$.

\bibliographystyle{plain}
\bibliography{../ref}

\end{document}